\newtheorem{thm}{Theorem}[section]
\newtheorem{defn}[thm]{Definition}
\newtheorem{lemma}[thm]{Lemma}
\newtheorem{cor}[thm]{Corollary}
\newtheorem{remark}[thm]{Remark}
\newtheorem{example}[thm]{Example}
\newcommand{\bmb}{\left( \begin{array}{rr}}
\newcommand{\enm}{\end{array}\right)}
\newcommand{\g}{{\mathfrak{g}}}
\renewcommand{\sl}{{\mathfrak{sl}}}
\newcommand{\ch}{{\rm ch}}
\newcommand{\C}{{\mathbb C}}
\newcommand{\Z}{{\mathbb Z}}
\newcommand{\Q}{{\mathcal Q}}
\newcommand{\Hom}{{\rm Hom}}
\newcommand{\bm}{{\mathbf m}}
\newcommand{\bn}{{\mathbf n}}
\newcommand{\bx}{{\mathbf x}}
\newcommand{\bu}{{\mathbf u}}
\newcommand{\bz}{{\mathbf z}}
\newcommand{\al}{{\alpha}}
\newcommand{\M}{\mathcal{M}}
\newcommand{\Zv}{{\mathbb Z}_v}
\numberwithin{equation}{section}
\begin{document}

\title{Difference equations for graded characters from quantum cluster algebra}
\author{Philippe Di Francesco} 
\address{PDF: Department of Mathematics, University of Illinois MC-382, Urbana, IL 61821, U.S.A. e-mail: philippe@illinois.edu}
\author{Rinat Kedem}
\address{RK: Department of Mathematics, University of Illinois MC-382, Urbana, IL 61821, U.S.A. e-mail: rinat@illinois.edu}
\date{\today}
\begin{abstract}
We introduce a new set of $q$-difference operators acting as raising operators on a family of symmetric polynomials which are characters of graded tensor products of current algebra $\g[u]$ KR-modules \cite{FL} for $\g=A_r$. 
These operators are generalizations of the Kirillov-Noumi \cite{kinoum} Macdonald raising operators, in the dual $q$-Whittaker limit $t\to\infty$. They form a representation of the quantum $Q$-system of type $A$ \cite{qKR}.
This system is a subalgebra of a quantum cluster algebra, and is also a discrete integrable system whose
conserved quantities, analogous to the Casimirs of $U_q({\mathfrak sl}_{r+1})$, act as difference operators on the above family of symmetric polynomials. The characters
in the special case of products of fundamental modules are class I $q$-Whittaker functions, or characters of level-1 Demazure modules or Weyl modules. The action of the  conserved quantities on these characters gives the difference quantum Toda equations \cite{Etingof}. We obtain a generalization of the latter for arbitrary tensor products of KR-modules.

\end{abstract}

\maketitle
\date{\today}
\section{Introduction}

We consider the set of all symmetric polynomials in $r+1$ variables with coefficients in $\Z[q]$ which arise as graded characters of tensor products of $\g[u]$-modules, as defined by \cite{FL}. Here, $\g=sl_{r+1}$, $\g[u]$ are polynomials in $u$ with coefficients in $\g$, and we restrict our attention to tensor products of Kirillov-Reshetikhin (KR) modules \cite{ChariMoura}. In this case these are simply finite-dimensional, irreducible $\g$-modules with highest weights which are multiples of a fundamental weight, with an induced $\g[u]$-action.

The definition of the grading in \cite{FL} was motivated by the action of the affine algebra $\widehat{\g}$ on conformal blocks in WZW conformal field theory. It is described entirely in terms of the action of $\g[u]$ on tensor products of finite-dimensional modules. In special cases, the graded tensor product is a Demazure module of $\g[u]$ \cite{FourierLittelmann} or a Weyl module \cite{ChariLoktev}. In special, stabilized limits, their characters coincide with the characters of affine algebra modules.

It was conjectured in \cite{FL}, and subsequently proved \cite{ArdonneKedem,DFK08}, that the grading is equivalent to that of the quantum algebra action on the analogous tensor product in the crystal limit, related to the (finite) quantum spin chain whose Hilbert space is the tensor product of the associated quantum group modules \cite{OSS}. In the case of $\sl_n$, the coefficients of the Schur functions in the expansion of the graded characters, the graded multiplicities, are generalized Kostka polynomials \cite{SW}.

In our previous work \cite{qKR}, we gave a simple formulation of these characters in terms of generators of a non-commutative algebra, the quantum $Q$-system, which is a subalgebra of a quantum cluster algebra \cite{BernZel}. The graded tensor product mutiplicities are computed as linear functionals of the corresponding monomial in generators of the quantum cluster algebra (see Theorem \ref{CTId} below).

In this paper, we turn to the consequences of this formulation, and consider the action of two natural operators on this product: Multiplication by generators of the algebra, and the action of the conserved quantities of the quantum $Q$-system considered as a non-commutative integrable, discrete evolution. We find that the action by the conserved quantities is a generalization, in the case when the tensor product includes non-fundamental modules, of the $q$-deformed quantum Toda Hamiltonians \cite{Etingof}. The action by generators of the algebra is given by $q$-difference operators, which in the case of fundamental modules is the dual $q$-Whittaker limit of the Macdonald raising operators of Kirillov and Noumi \cite{kinoum}.

Let us summarize our results briefly. Let $\bn=\{n_\ell^{(\al)}, 1\leq \ell\leq k, \al\in[1,r]\}$ be a set of non-negative integers, where $k$ is some positive integer which we call the {\em level}. The tensor product 
of $\g$-modules 
$$\mathcal{M}_\bn=\underset{1\leq\al\leq r}{\otimes}\underset{1\leq \ell \leq k}{\otimes} V(\ell\omega_\al)^{\otimes n_\ell^{(\al)}},$$ 
where $\omega_\al$ are the fundamental weights and $V(\lambda)$ is the irreducible $\g$-module with highest weight $\lambda$, can be endowed with the structure of a $\g[t]$-module and a $\g$-equivariant grading \cite{FL}. The graded components $\M_{\bn}[j]$ are $\g$-modules, and the graded character of $\mathcal M_\bn$ are defined as
$$\chi_{\bn}(q,\bz) = \sum_{j\geq 0} q^j \ch_\bz \M_{\bn}[j]$$
where $\ch_\bz$ is the classical character, expanded in terms of Schur functions of $\bz=(z_1,...,z_{r+1})$. 

The first operator which acts on the characters is a generalized $q$-deformed Toda operator:
\vskip.1in
\noindent{\bf Theorem \ref{difchar}.}{\it Let $k\geq 1$ and let $n_k^{(\al)}$ and $n_{k-1}^{(\al)}$ be greater than or equal to $1-\delta_{k,1}$ for all $\al$. 
The graded characters $\chi_\bn\overset{\rm def}{=} \chi_\bn(q^{-1},\bz)$
satisfy the following difference equation:
\begin{eqnarray*}
&&\sum_{\al=1}^{r+1} 
\chi_{\bn+\epsilon_{\al-1,k-1}-\epsilon_{\al,k-1}+\epsilon_{\al,k}-\epsilon_{\al-1,k}} \nonumber \\
&&-\sum_{\al=1}^r q^{k-1-\sum_{i=1}^k i n_i^{(\al)}}
\chi_{\bn+\epsilon_{\al-1,k-1}-\epsilon_{\al,k-1}+\epsilon_{\al+1,k}-\epsilon_{\al,k}}= e_1(\bz) \, \chi_\bn
\end{eqnarray*}
where the vector $\epsilon_{\al,i}$ is defined so that $(\epsilon_{\al,i})_j^{(\beta)}=\delta_{\beta,\al}\delta_{j,i}$,
and $e_1(\bz)=z_1+z_2+\cdots +z_{r+1}$.
}
\vskip.1in

In the special case when $k=1$, we show in Section \ref{todawhit} that the graded characters are special $q$-Whittaker functions, and the difference equations in Theorem \ref{difchar} are the 
$U_q(\sl_{r+1})$ difference Toda equation \cite{Etingof}.

\medskip

The second operator which acts on the graded characters adds a factor to the tensor product. This too can be expressed as the action of a $q$-difference operator on $\chi_\bn(q^{-1};\bz)$, which we call a raising operator. As a result, there is an expression for the solutions of the difference equations of Theorem \ref{difchar} as the product of raising operators on the constant function 1. When $k=1$, the difference operators are a dual $q$-Whittaker limit degeneration of
the difference Macdonald raising operators of Kirillov and Noumi \cite{kinoum}.

Let $q=v^{-r-1}$ and let $D_i(z_1,...,z_{r+1})=(v z_1,...,q v z_i, ..., v z_{r+1})$.
Given a subset $I\subset [1,r+1]$, let $\overline I$ be its complement, and denote
\begin{equation}
z_I=\prod_{i\in I} z_i, \quad D_I=\prod_{i\in I} D_i,\quad {\rm  and}\quad 
a_I(\bz)=\prod_{i\in I\atop j\in \bar I}\frac{z_i}{z_i-z_j}.
\label{productnotation}
\end{equation}
Define the difference operators  ${\mathcal D}_{\al,n}$ acting on the space of 
Laurent polynomials in $\bz=(z_1,...,z_{r+1})$ with coefficients in $\Z[v,v^{-1}]$ as follows:
\begin{equation}
{\mathcal D}_{\al,n}=v^{-\frac{\Lambda_{\al,\al}}{2}n-\sum_{\beta=1}^r \Lambda_{\al,\beta}}
\sum_{I\subset [1,r+1]\atop |I|=\al } (z_I)^n a_I(\bz)\, D_I \qquad \al\in[0,r+1],\ n\in \Z.
\end{equation}
The matrix $\Lambda$ is given in Equation \eqref{lambdadef}.

The crucial property satisfied by the operators ${\mathcal D}_{\al,k}$ is that they obey the dual quantum $Q$-system relations
(see Theorem \ref{repsQ}).
The main result of Section \ref{demazure}, relying on this observation, is the following expression for the graded characters:
\vskip.1in
\noindent{\bf Theorem \ref{almost}.}{\it
The graded characters for ${\mathfrak sl}_{r+1}$ at level $k$ are given by:
\begin{eqnarray*}
\chi_\bn(q^{-1},\bz)&= &v^{\frac{1}{2} \sum_{i,j,\al,\beta} n_i^{(\al)}{\rm Min}(i,j)\Lambda_{\al,\beta}n_j^{(\beta)}+\sum_{i,\al,\beta} n_i^{(\al)}\Lambda_{\al,\beta}+\frac{1}{2}\sum_\al \Lambda_{\al,\al}+\sum_{\al<\beta}\Lambda_{\al,\beta}} \nonumber \\
&& \quad \times  \prod_{\al=1}^r({\mathcal D}_{\al,k})^{n_k^{(\al)}}\prod_{\al=1}^r({\mathcal D}_{\al,k-1})^{n_{k-1}^{(\al)}}
\cdots
\prod_{\al=1}^r({\mathcal D}_{\al,1})^{n_1^{(\al)}}\, 1
\end{eqnarray*}
}

\medskip

The paper is organized as follows. Section \ref{notadefs} gathers notations and definitions.
In Section \ref{consdiff}, we derive a set of difference equations for the characters, by using the 
conserved quantities of the quantum $Q$-system. The particular case of characters for fundamental KR-modules
is addressed in Section \ref{todawhit}, where they are identified with $q$-Whittaker functions, by
identifying the difference equation they obey with the $q$-deformed quantum Toda equation. In Section \ref{demazure},
we present the general construction of the characters by iterated action of $q$-difference operators on the constant $1$.
This is proved by showing that the latter satisfy the dual quantum $Q$-system relations and realize the action of the
quantum $Q$-system generators on characters by adding one extra factor in the tensor product. Details of the proofs 
are given in Appendices A and B.
\vskip.2in

\noindent{\bf Acknowledgments.} We thank O.Babelon, M.Bergvelt, A.Borodin, I. Cherednik, I.Corwin, V. Pasquier, and S.Shakirov for discussions at various stages of this work. R.K.'s research is supported by NSF grant DMS-1404988. P.D.F. is supported by the NSF grant DMS-1301636 and the Morris and Gertrude Fine endowment. R.K. would like to thank the Institut de Physique Th\'eorique (IPhT) of Saclay, France, for hospitality during various stages of this work.

\section{Notations and definitions}\label{notadefs}
The starting point for the results of this paper is the algebra called the quantum Q-system. This is a subalgebra of the quantum cluster algebra  \cite{BernZel} associated with the $A_r$ Q-system \cite{Ke,DFKnoncom,qKR}. The main results of the paper are representations of this algebra and its conserved quantities acting as difference operators on the space of symmetric Laurent polynomials in $r+1$ variables.

\subsection{Quantum $Q$-system}

The $A_r$ quantum $Q$-system is a non-commuative algebra 
$\mathcal A$ generated by 
invertible elements $\{\Q_{\al,k}: \al\in[1,r],k\in\Z\}$ subject to the quantum $Q$-system relations:
\begin{equation}\label{qQsys}
v^{\Lambda_{\al,\al} }\Q_{\al,k+1}\Q_{\al,k-1} = \Q_{\al,k}^2- \Q_{\al+1,k}\Q_{\al-1,k},\qquad \Q_{0,k}=\Q_{r+1,k}=1,
\end{equation}
as well as the commutation relations
\begin{equation}\label{commualbet}
\Q_{\al,k}\Q_{\beta,k'} = v^{\Lambda_{\al,\beta}(k'-k) }\Q_{\beta,k'}\Q_{\al,k}\qquad (|k-k'|\leq |\al-\beta|+1).
\end{equation}
The matrix $\Lambda$ is proportional to the inverse of the Cartan matrix of $A_r$:
\begin{equation}\label{lambdadef}
\Lambda_{\al,\beta}={\rm Min(\al,\beta)}\left(r+1- {\rm Max}(\al,\beta)\right), \quad (\al,\beta\in [1,r]).\end{equation}
Here, $v$ is an invertible central element of the algebra.
Note that all the variables $\Q_{\al,k}$ for different $\al$ and fixed $k$ commute with each other. 

The algebra $\mathcal A$, which contains the inverses of all its generators, is finitely generated by any set of $2r$ generators and their inverses which belong to the same ``cluster" \cite{DFKnoncom}. For example, the set $\mathcal S_0=\{\Q_{\al,0},\Q_{\al,1}: \al\in [1,r]\}$. Since $\mathcal A$ is a subalgebra of a quantum cluster algebra, all other generators $\Q_{\al,k}$ with $k\in \Z$ are {\em Laurent polynomials} in the generating set.

In Section \ref{conserved}, we provide a brief review of the discrete integrable structure of the quantum $Q$-system.
Explicit solutions were worked out in detail in Ref. \cite{PDFqmult}.

\subsection{The constant term of elements in $\mathcal A$}
Starting with the quantum Q-system, we showed in \cite{qKR} that there is a linear functional from monomials in positive powers of the generators of $\mathcal A$ to characters of graded tensor products of KR-modules. In \cite{qKR} showed this for all simply-laced algebras, but in the current context, we concentrate on type $A$ exclusively.

Quantum cluster algebras have a Laurent property, generalizing the one for commutative cluster algebras \cite{FominZel}. As a consequence, denoting by $\Z_v:=\Z[v,v^{-1}]$:
\begin{lemma}\label{lopo}
Given a set of initial data $\mathcal S_0=\{\Q_{\al,0},\Q_{\al,1}: \al\in [1,r]\}$, any solution $\Q_{\al,k}$ of the quantum $Q$-system can be expressed as a Laurent polynomial of the elements of $\mathcal S_0$, with coefficients in $\Zv$.
\end{lemma}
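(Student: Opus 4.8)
The plan is to deduce the statement from the quantum Laurent phenomenon of Berenstein--Zelevinsky \cite{BernZel}, by exhibiting the forward and backward evolution of the quantum $Q$-system as a finite sequence of quantum cluster mutations issued from the seed $\mathcal{S}_0$. The first step is to observe that the exchange relation \eqref{qQsys}, solved for the top index,
$$\Q_{\al,k+1}=v^{-\Lambda_{\al,\al}}\left(\Q_{\al,k}^2-\Q_{\al+1,k}\Q_{\al-1,k}\right)\Q_{\al,k-1}^{-1},$$
has precisely the shape of a single quantum mutation: the right-hand side is a sum of two monomials in the variables $\{\Q_{\beta,k}:\beta\in[1,r]\}$ of the current cluster, multiplied by the inverse of the mutated variable $\Q_{\al,k-1}$. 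Thus passing from the cluster $\{\Q_{\beta,k-1},\Q_{\beta,k}:\beta\in[1,r]\}$ to $\{\Q_{\beta,k},\Q_{\beta,k+1}:\beta\in[1,r]\}$ is effected by mutating the $r$ variables $\Q_{\al,k-1}\mapsto\Q_{\al,k+1}$ in succession; solving \eqref{qQsys} instead for $\Q_{\al,k-1}$ gives the backward evolution. Since any $\Q_{\al,k}$ is reached from $\mathcal{S}_0$ in $|k|$ or $|k-1|$ such steps, it suffices to establish the Laurent property along finite mutation paths.

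Second, I would verify that this evolution genuinely takes place inside a quantum cluster algebra, by checking the Berenstein--Zelevinsky compatibility condition relating the exchange matrix $B$ encoded by the monomials on the right of \eqref{qQsys} to the quantization matrix read off from the commutation relations \eqref{commualbet} with $\Lambda$ as in \eqref{lambdadef}: the product $B^{\rt}\Lambda$ must be diagonal and nonzero, the quantization form on the $2r$ seed variables (whose entry between $\Q_{\al,i}$ and $\Q_{\beta,j}$ is $\Lambda_{\al,\beta}(j-i)$) must be skew-symmetric, and it must transform correctly under each mutation so that \eqref{commualbet} is preserved uniformly along the evolution. This identification of the quantum $Q$-system as a subalgebra of a quantum cluster algebra was carried out in \cite{DFKnoncom,qKR}, and it is exactly what licenses invoking the quantum Laurent phenomenon.

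With the cluster structure established, the quantum Laurent phenomenon \cite{BernZel} asserts that any variable obtained by a finite sequence of mutations from a fixed seed is a Laurent polynomial in the seed variables with coefficients in $\Z[v^{\pm1/2}]$; applied to the path reaching $\Q_{\al,k}$ from $\mathcal{S}_0$ this gives the Laurent property. To obtain the sharper claim of coefficients in $\Zv=\Z[v,v^{-1}]$, I would note that the exchange relations \eqref{qQsys} are already normalized with integer powers of $v$ and that every entry of $\Lambda$ in \eqref{lambdadef} is an integer, so the symmetrization factors entering the Laurent expansions are integral and no half-integer powers of $v$ survive. The main obstacle is the compatibility and quantization bookkeeping of the second step rather than the Laurent property itself: one must confirm that the single fixed matrix $\Lambda$ of \eqref{lambdadef} is consistent with the mutation rule for the quantization data at \emph{every} step, so that the commutation relations \eqref{commualbet} continue to hold after each mutation and the $Q$-system recursions are bona fide quantum cluster mutations. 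Once this is verified the conclusion is immediate.
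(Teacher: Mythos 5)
Your proposal follows essentially the same route as the paper: the paper proves this lemma simply by invoking the quantum Laurent phenomenon of \cite{BernZel}, together with the identification (established in \cite{DFKnoncom,qKR}) of the quantum $Q$-system evolution as a sequence of mutations in a compatible quantum cluster algebra with seed $\mathcal S_0$. Your additional bookkeeping on the exchange-matrix/quantization compatibility and on the coefficient ring $\Zv$ is exactly the content the paper delegates to those references, so there is no substantive difference in approach.
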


Thus, any polynomial in the generators of $\mathcal A$ can be expressed as a Laurent polynomial in the elements of $\mathcal S_0$. Since these elements $q$-commute according to Equation \eqref{commualbet}, we can define a normal ordering for any monomial in the elements of $\mathcal S_0$ as follows.

\begin{defn} The {\em normal ordered expression} of a monomial in the generators in $\mathcal S_0$ is the expression obtained, using the commutation relations \eqref{commualbet}, when all the $\Q_{\al,0}$ are written to the left of all the $\Q_{\beta,1}$, for all $\al,\beta$.
\end{defn}
The normal ordering extends to any Laurent polynomial or series in the elements of $\mathcal S_0$.

Normal ordering is necessary in order to give a unique meaning to the evaluation of a polynomial in $\mathcal S_0$ at some central value of the subset $\{\Q_{\al,0}\}_\al$, because these generators do not commute with the generators  $\{\Q_{\al,1}\}_\al$.
The evaluation occurs only {\em after} normal ordering.

\begin{defn}\label{evalmap}
The  linear map 
$ev: \Zv[\{\Q_{\al,0}^{\pm1},\Q_{\al,1}^{\pm1}\}_\al]\to \Zv[\{\Q_{\al,1}^{\pm1}\}_\al] $ 
is given by (1) normal ordering the Laurent polynomial of the variables in $\mathcal S_0$, 
and then (2) setting $\Q_{\al,0}=1$ for all $\al$ in the normal-ordered expression.
\end{defn}
A closely related map is the following:
\begin{defn}\label{evalzero}
The map 
$ev_0: \Zv[\Q_{\al,0}^{\pm1},\Q_{\al,1}^{\pm1}]\to \Zv[\Q_{\al,1}^{\pm1}] $ 
is given by (1) normal ordering the Laurent polynomial of the variables in $\mathcal S_0$, 
and then (2) setting $\Q_{\al,0}=v^{-\sum_\beta \Lambda_{\al,\beta}}$ for all $\al$ in the normal-ordered expression.
\end{defn}
The two evaluation maps are related in the following manner:
\begin{lemma}\label{equivev}
For any Laurent polynomial $f\in \Zv[\Q_{\al,0}^{\pm1},\Q_{\al,1}^{\pm1}]$, we have:
$$ ev\left( \prod_{\beta=1}^r \Q_{\beta,1} \, f \right)= \left(\prod_{\beta=1}^r \Q_{\beta,1} \right)\, ev_0(f) $$
\end{lemma}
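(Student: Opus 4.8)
The plan is to reduce to a single normal-ordered monomial by linearity, and then to track precisely the scalar phase produced when the extra factor $\prod_{\beta=1}^r \Q_{\beta,1}$ is commuted to the right of the level-$0$ generators. Since both $ev$ and $ev_0$ begin by normal ordering, I may assume without loss of generality that $f$ is already normal ordered, and by linearity that it is a single monomial
$$m=\prod_{\al=1}^r \Q_{\al,0}^{a_\al}\,\prod_{\al=1}^r\Q_{\al,1}^{b_\al},$$
where the factors within each level may be written in any order because generators $\Q_{\al,k}$ at fixed $k$ commute.

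First I would normal order $\prod_{\beta=1}^r \Q_{\beta,1}\, m$. The only factors out of order are the leading $\Q_{\beta,1}$'s, which must be pulled past the level-$0$ block $\prod_\al \Q_{\al,0}^{a_\al}$. Specializing the commutation relation \eqref{commualbet} to levels $k=1$, $k'=0$ — for which the constraint $|k-k'|\le|\al-\beta|+1$ holds for every $\al,\beta$ — gives $\Q_{\beta,1}\Q_{\al,0}=v^{-\Lambda_{\al,\beta}}\Q_{\al,0}\Q_{\beta,1}$. Hence commuting each $\Q_{\beta,1}$ through $\prod_\al\Q_{\al,0}^{a_\al}$ produces the scalar $v^{-\sum_\al a_\al\Lambda_{\al,\beta}}$, and commuting all of them through produces
$$\prod_{\beta=1}^r\Q_{\beta,1}\,m = v^{-\sum_{\al,\beta}a_\al\Lambda_{\al,\beta}}\,\prod_{\al=1}^r\Q_{\al,0}^{a_\al}\,\prod_{\al=1}^r\Q_{\al,1}^{b_\al+1},$$
where in the last step the surviving level-$1$ factors $\prod_\beta\Q_{\beta,1}$ and $\prod_\al\Q_{\al,1}^{b_\al}$ have been merged using commutativity at fixed level. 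This expression is already normal ordered, so applying $ev$ (setting every $\Q_{\al,0}=1$) yields $v^{-\sum_{\al,\beta}a_\al\Lambda_{\al,\beta}}\prod_\al\Q_{\al,1}^{b_\al+1}$.

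For the right-hand side, $ev_0(m)$ replaces each $\Q_{\al,0}$ by $v^{-\sum_\beta\Lambda_{\al,\beta}}$, giving $v^{-\sum_{\al}a_\al\sum_\beta\Lambda_{\al,\beta}}\prod_\al\Q_{\al,1}^{b_\al}$; multiplying on the left by $\prod_\beta\Q_{\beta,1}$ (and using that $v$ is central) reproduces exactly the same monomial $\prod_\al\Q_{\al,1}^{b_\al+1}$ with exactly the same power of $v$. The two sides therefore agree, and linearity over the Laurent polynomial $f$ finishes the argument.

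The one point to verify — rather than a genuine obstacle — is that the evaluation point $v^{-\sum_\beta\Lambda_{\al,\beta}}$ defining $ev_0$ is precisely the phase accumulated when one copy of each $\Q_{\beta,1}$, $\beta\in[1,r]$, is commuted past $\Q_{\al,0}$. Because the sum defining this evaluation point runs over the same index set $\beta\in[1,r]$ as the inserted product $\prod_{\beta=1}^r\Q_{\beta,1}$, the two exponents $\sum_{\al,\beta}a_\al\Lambda_{\al,\beta}$ coincide verbatim, with no appeal to any further property of $\Lambda$ needed. In short, the definition of $ev_0$ is engineered so that inserting $\prod_\beta\Q_{\beta,1}$ on the left and shifting the evaluation point from $1$ to $v^{-\sum_\beta\Lambda_{\al,\beta}}$ are two bookkeepings of the same commutation phase.
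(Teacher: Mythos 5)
Your proof is correct and follows essentially the same route as the paper's: the paper's one-line argument simply observes that the commutation relations give $\bigl(\prod_{\beta=1}^r \Q_{\beta,1}\bigr)\Q_{\al,0}=v^{-\sum_\beta \Lambda_{\al,\beta}}\Q_{\al,0}\prod_{\beta=1}^r \Q_{\beta,1}$, i.e.\ the phase picked up when the inserted product crosses each $\Q_{\al,0}$ is exactly the $ev_0$ evaluation point, which is the identity you verify monomial by monomial. Your version just makes the linearity reduction and exponent bookkeeping explicit (and correctly covers negative powers, since the phase formula is uniform in the integer exponents).
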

\begin{proof}
The commutation relations \eqref{commualbet} imply:
$\left(\prod_{\beta=1}^r \Q_{\beta,1} \right) \Q_{\al,0}=v^{-\sum_\beta \Lambda_{\al,\beta}}\Q_{\al,0}\prod_{\beta=1}^r \Q_{\beta,1}$.
\end{proof}

We also extend the notion of the evaluation map to any Laurent polynomials or series in the generators of $\mathcal S_0$.

There is a stronger version of Lemma \ref{lopo} in the case of the quantum Q-system, which is a {\em polynomiality} property due to the specific form of the quantum $Q$-system (see Corollary 5.13 of \cite{qKR}):
\begin{lemma}\label{poly}
Let $f$ be a polynomial of the variables $\{\Q_{\al,k}, \al\in [1,r], k\geq 1\}$, obeying the quantum $Q$-system. Then 
$ev_0(f)\in \Zv[\{\Q_{\beta,1},{\beta\in [1,r]}\}]$, namely it is a {\em polynomial} of 
the variables $\{\Q_{\beta,1}\}_{\beta\in [1,r]}$, with coefficients which are Laurent polynomials in $v$. 
\end{lemma}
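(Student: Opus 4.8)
The plan is to analyze the effect of $ev_0$ directly on the normal-ordered expansion in $\mathcal{S}_0$ and to isolate exactly which terms can produce negative powers of the $\Q_{\beta,1}$. By Lemma \ref{lopo} I write $f$, after normal ordering, as $f=\sum_{\ba,\bb}\gamma_{\ba,\bb}\,\Q_0^{\ba}\Q_1^{\bb}$, where $\Q_0^{\ba}=\prod_\al \Q_{\al,0}^{a_\al}$ and $\Q_1^{\bb}=\prod_\beta \Q_{\beta,1}^{b_\beta}$ (recall the $\Q_{\al,0}$ commute among themselves, as do the $\Q_{\beta,1}$). Since $c_\al=v^{-\sum_\beta\Lambda_{\al,\beta}}$ gives $\Q_0^{\ba}\mapsto v^{-\bone^{\rm t}\Lambda\ba}$, we obtain $ev_0(f)=\sum_{\bb}\big(\sum_{\ba}\gamma_{\ba,\bb}\,v^{-\bone^{\rm t}\Lambda\ba}\big)\Q_1^{\bb}$. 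Thus the assertion is equivalent to the vanishing, for every $\bb$ with at least one negative entry, of the specialized coefficient $\sum_{\ba}\gamma_{\ba,\bb}\,v^{-\bone^{\rm t}\Lambda\ba}$. The entire content of the lemma is this cancellation of the ``negative'' coefficients at the specific value $c_\al$.

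First I would reduce from an arbitrary $f$ to a single generator. By linearity it suffices to treat a monomial $\Q_{\al_1,k_1}\cdots\Q_{\al_m,k_m}$ with all $k_j\ge1$. Expanding each factor in $\mathcal{S}_0$ and bringing the product to normal order, a configuration whose $j$-th factor contributes $\Q_0^{\ba_j}\Q_1^{\bb_j}$ picks up the reordering factor $v^{-\sum_{i<j}\bb_i^{\rm t}\Lambda\ba_j}$ from commuting each $\Q_1^{\bb_i}$ past the later $\Q_0^{\ba_j}$. A short computation shows that, combined with the specialization weight $\prod_j v^{-\bone^{\rm t}\Lambda\ba_j}$, the total weight factorizes as $\prod_j v^{-(\bB_j+\bone)^{\rm t}\Lambda\ba_j}$ with $\bB_j=\bb_1+\cdots+\bb_{j-1}$. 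Hence $ev_0$ of the monomial is computed by multiplying, left to right, the ``shifted'' evaluations $ev_0^{[\bB]}(\Q_{\al_j,k_j})$, where $ev_0^{[\bB]}$ sets $\Q_{\al,0}=v^{-(\Lambda(\bB+\bone))_\al}$ and the shift $\bB$ is the accumulated $\Q_1$-degree of the previous factors (note $ev_0^{[0]}=ev_0$). It therefore suffices to prove the core statement: for every $\bB\ge0$ and every $\al$, $k\ge1$, the single shifted evaluation $ev_0^{[\bB]}(\Q_{\al,k})$ is a polynomial in $\{\Q_{\beta,1}\}$. Granting it, an induction on $m$ shows that in the left-to-right assembly only terms with each $\bb_j\ge0$ survive, so every accumulated shift $\bB_j$ stays $\ge0$, keeping us in the range of the core statement, and $ev_0$ of the monomial is a polynomial.

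To establish the core statement I would invoke the explicit solution of the quantum $Q$-system from \cite{PDFqmult}, which expresses each forward generator $\Q_{\al,k}$ as a finite noncommutative polynomial (a quantum determinant / non-intersecting-path partition function) in the cluster data. Applying $ev_0^{[\bB]}$ termwise, every contribution carrying a net negative power of some $\Q_{\beta,1}$ must cancel, and the surviving sum is a $v$-shifted form of the graded character of the KR-module $V(k\omega_\al)$, a genuine polynomial in the $\Q_{\beta,1}$; this is the noncommutative lift of the classical fact that $Q$-system solutions with fundamental-character initial data are KR-characters, provable by Jacobi--Trudi. Equivalently one may try a direct induction on $k$, using $\Q_{\al,k+1}=v^{-\Lambda_{\al,\al}}\big(\Q_{\al,k}^2-\Q_{\al+1,k}\Q_{\al-1,k}\big)\Q_{\al,k-1}^{-1}$ together with the fact that the numerator is, in $\mathcal{A}$, exactly right-divisible by $\Q_{\al,k-1}$.

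The hard part will be precisely this cancellation of the negative $\Q_{\beta,1}$-powers, which fails at a generic value of $\Q_{\al,0}$ and holds only at $c_\al=v^{-\sum_\beta\Lambda_{\al,\beta}}$: already for $\sl_2$ one finds $ev_0(\Q_{1,3})=v^{9}\Q_{1,1}^3-(v^3+v^5)\Q_{1,1}$, in which a spurious $\Q_{1,1}^{-1}$ produced by the recursion has coefficient $v^{-3}\Q_{1,0}^{-2}-v^{-1}$ and cancels exactly at $\Q_{1,0}=v^{-1}$. The noncommutative reordering makes this divisibility sensitive to the specialization value and to the ordering, so the classical divisibility does not transfer verbatim; controlling it is where the specific form of the quantum $Q$-system must be used, and is the substance of Corollary 5.13 of \cite{qKR}.
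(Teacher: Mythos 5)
Your reduction formula (the left-to-right assembly with shifted evaluations) is algebraically correct, but the ``core statement'' you reduce everything to is \emph{false}, and this is a fatal gap. Work out ${\mathfrak sl}_2$ with the paper's conventions: normal ordering gives
\begin{equation*}
\Q_{1,3}=v^{3}\,\Q_{1,0}^{-2}\Q_{1,1}^{3}-(v^{-1}+v)\,\Q_{1,0}^{-2}\Q_{1,1}
+\bigl(v^{-3}\Q_{1,0}^{-2}-v^{-1}\bigr)\Q_{1,1}^{-1},
\end{equation*}
so the spurious coefficient is $v^{-3}\Q_{1,0}^{-2}-v^{-1}$, exactly as in your example (though the surviving coefficients are $v^{5}$ and $v+v^{3}$, not $v^{9}$ and $v^{3}+v^{5}$). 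Your shifted evaluation $ev_0^{[B]}$ sets $\Q_{1,0}=v^{-(B+1)}$, under which this coefficient becomes $v^{2B-1}-v^{-1}$: it vanishes \emph{only} for $B=0$. Thus $ev_0^{[1]}(\Q_{1,3})$ is not a polynomial in $\Q_{1,1}$, i.e.\ the core statement already fails at shift $\bB=1$, $k=3$. Consequently your induction ``only terms with each $\bb_j\ge 0$ survive'' is also false: for the monomial $\Q_{1,1}\Q_{1,3}$ one finds $ev_0(\Q_{1,1}\Q_{1,3})=v^{7}\Q_{1,1}^{4}-(v^{3}+v^{5})\Q_{1,1}^{2}+v-v^{-1}$, where the constant $v-v^{-1}$ is precisely the surviving contribution of the $\bb_2=-1$ term of the second factor. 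The product is a polynomial not because negative powers cancel factor by factor, but because the positive degree accumulated from the earlier factor absorbs them. Polynomiality is a global property of the whole monomial, and a left-to-right factor-wise scheme of the kind you propose cannot establish it.

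Separately, even your base case ($\bB=0$, a single $\Q_{\al,k}$) is not proved: you assert the cancellation of negative powers by appeal to the explicit solutions of \cite{PDFqmult} and a ``noncommutative Jacobi--Trudi'' principle, and then concede that this cancellation ``is the substance of Corollary 5.13 of \cite{qKR}''. That concession undercuts the whole enterprise. The paper itself offers no proof of Lemma \ref{poly}; it quotes Corollary 5.13 of \cite{qKR}, whose actual mechanism (reflected in this paper as Lemma \ref{uniqp}) is to use the Laurent property in the two adjacent clusters ${\mathcal S}_{-1}$ and ${\mathcal S}_0$ to trade every $\Q_{\al,1}^{-1}$ for a $\Q_{\al,-1}$ with coefficients that remain Laurent polynomials in the $\Q_{\gamma,0}$ --- a statement about the algebra $\mathcal A$ itself, made before any evaluation, which is what makes the specialization harmless. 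If citing that corollary is allowed, it applies verbatim to any polynomial $f$ and your reduction is superfluous; if it is not allowed, then both the repair of your (false) core statement and your base case remain unproved, and the proposal establishes nothing.
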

As a consequence of Lemma \ref{equivev}, we can restate the polynomiality property as:
$ev(\prod_{\beta=1}^r \Q_{\beta,1} \, f)\in \left(\prod_{\beta=1}^r \Q_{\beta,1}\right) \Zv[\{\Q_{\beta,1}\}_{\beta\in [1,r]}]$ 
is a polynomial of the variables $\{\Q_{\beta,1}\}_{\beta\in [1,r]}$,
which is a multiple of $\prod_{\beta=1}^r \Q_{\beta,1}$.

The following two definitions concern the evaluation of Laurent series, which we use only where these evaluations converge.
\begin{defn}
Given a Laurent series $f$ in $\{\Q_{\al,1}^{-1}, \al\in[1,r]\}$ with coefficients in $\Zv$, the map CT(f) is the constant term in $\Q_{\al,1}$ for all $\al$.
\end{defn}

\begin{defn} \label{phidef}
Given a Laurent series $f$ in $\{\Q_{\al,1}^{-1}, \al\in[1,r]\}$ with coefficients in 
$\Zv[\Q_{\al,0}^{\pm1}]$,  
we define the linear map $\phi=CT\circ ev$ sends such a series to an element in $\Zv$ by first evaluating the normal ordered expression of $f$ at all $\Q_{\al,0}=1$, and then extracting the constant term in all $\Q_{\al,1}$.
\end{defn}

\subsection{Constant term identity for graded tensor product multiplicities}

The main tool in the paper \cite{qKR} was an expression of any graded tensor product multiplicity as the constant term of a corresponding monomial in the generators of $\mathcal A$. 
Fix a dominant $\g$-weight $\lambda = \sum_\al \ell_\al \omega_\al\in P^+$ and a set of non-negative integers $\bn=\{n_{i}^{(\al)}\}$ as before.
The graded tensor product multiplicities
$$M_{\bn,\lambda}(q):=\sum_j q^j \dim(\Hom_\g (\M_\bn[j], V(\lambda))$$
can be expressed as follows:

\begin{thm}\label{CTId}\cite{qKR}
The Graded multiplicities of the irreducible components in the level $k$ $M$-sum formula of \cite{KR} can be expressed as
\begin{eqnarray}\label{gtensopro}
M_{\bn,\lambda}(q^{-1}) &=&v^{\sum_{\al,\beta,i}n^{(\al)}_{i}\Lambda_{\al,\beta}+
{1\over 2}(\sum_\al \ell_\al \Lambda_{\al,\al}+\sum_{i,j,\al,\beta} n_i^{(\al)}{\rm Min}(i,j)\Lambda_{\al,\beta}n_j^{(\beta)}) }\nonumber \\
&&\qquad \phi \left(\left(\prod_{\al=1}^r \Q_{\al,1}\Q_{\al,0}^{-1}\right)
\left(\prod_{i=1}^k \prod_{\al=1}^r (\Q_{\al,i})^{n^{(\al)}_{i}} \right) \prod_{\al=1}^r \lim_{k\to\infty} 
(\Q_{\al,k}\Q_{\al,k+1}^{-1})^{\ell_\al+1} \right),
\end{eqnarray}
where $\Q_{\al,n}$ are the generators of $\mathcal A$, and 
\begin{equation}\label{ttoq}
q=v^{-r-1}
\end{equation}
is central.
\end{thm}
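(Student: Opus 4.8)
The plan is to evaluate the linear functional $\phi = CT\circ ev$ on the operator monomial on the right-hand side and to match the outcome, term by term, with the fermionic $M$-sum formula of \cite{KR} for $M_{\bn,\lambda}(q)$. First I would make the monomial fully explicit. By Lemma \ref{lopo} every $\Q_{\al,k}$ with $k\ge 1$ is a Laurent polynomial in the initial cluster $\mathcal S_0=\{\Q_{\al,0},\Q_{\al,1}\}$; I would use the closed-form solution of the quantum $Q$-system (e.g.\ the path/determinant formula of \cite{PDFqmult, DFKnoncom}) to write each factor $(\Q_{\al,i})^{n_i^{(\al)}}$ as an explicit Laurent series in the $\Q_{\beta,1}$ after setting $\Q_{\beta,0}=1$. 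The monomials appearing in this expansion are naturally indexed by the same data --- nonnegative integers $\{m_i^{(\al)}\}$ --- that label the summands of the $M$-sum, so the combinatorial correspondence between the two sides is built in from the start.

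Second, I would treat the stabilized factor $\prod_\al \lim_{k\to\infty}(\Q_{\al,k}\Q_{\al,k+1}^{-1})^{\ell_\al+1}$. The point is that in the large-$k$ limit the ratio $\Q_{\al,k}\Q_{\al,k+1}^{-1}$ converges, after $ev$, to a monomial in the $\Q_{\beta,1}^{-1}$ whose exponents record a fundamental weight; raising it to the power $\ell_\al+1$ and taking the product over $\al$ therefore shifts the constant-term condition so that $CT$ selects precisely the coefficient carrying highest weight $\lambda=\sum_\al \ell_\al\omega_\al$. I would verify convergence in the completion in which these Laurent series live and check that the limit commutes with normal ordering.

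Third comes the bookkeeping. Normal ordering the whole monomial by repeated use of \eqref{commualbet} produces a prefactor that is a power of $v$ depending on how many times each $\Q_{\al,i}$ is passed across each $\Q_{\beta,j}$; I would show this accumulated power is exactly the quadratic-plus-linear exponent
$$\sum_{\al,\beta,i}n_i^{(\al)}\Lambda_{\al,\beta}+\tfrac12\Big(\sum_\al \ell_\al \Lambda_{\al,\al}+\sum_{i,j,\al,\beta} n_i^{(\al)}\,{\rm Min}(i,j)\,\Lambda_{\al,\beta}\,n_j^{(\beta)}\Big)$$
displayed in the theorem. After setting $\Q_{\al,0}=1$, extracting the constant term in the $\Q_{\al,1}$ collects all monomials with a fixed total exponent; because the underlying algebra is a $q$-deformation, these collect into Gaussian (i.e.\ $q$-)binomial coefficients $\qbin{p_i^{(\al)}+m_i^{(\al)}}{m_i^{(\al)}}$ with vacancy numbers $p_i^{(\al)}$ read off from the linear constraints, reproducing the $M$-sum.

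The main obstacle is this third step: controlling the $v$-powers generated by the non-commutativity and proving that the constant-term extraction turns the naive counting of monomials into honest Gaussian binomials rather than ordinary ones. This requires organizing the reordering so that each crossing contributes the correct power of $v$ and identifying, for every $\al$ and $i$, the vacancy number $p_i^{(\al)}$ with the exponent governing the relevant $q$-binomial. A convenient way to make this precise would be to pass first to the associated commutative cluster variables, compute the constant term there (where it is a manifestly polynomial count), and then reinstate the $v$-grading via the normal-ordering prefactor, so that the match with the graded $M$-sum of \cite{KR} becomes an identity of Laurent polynomials in $v$.
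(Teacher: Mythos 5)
First, a point of order: the present paper does not prove this statement at all --- Theorem \ref{CTId} is imported verbatim from \cite{qKR}, so there is no in-paper proof to compare yours against. Judged on its own terms, and against the argument in \cite{qKR}, your plan has two genuine gaps.

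The first gap is in your treatment of the tails. You assert that after evaluation each factor $\lim_{k\to\infty}\Q_{\al,k}\Q_{\al,k+1}^{-1}$ becomes ``a monomial in the $\Q_{\beta,1}^{-1}$ whose exponents record a fundamental weight,'' so that $CT$ simply reads off the weight-$\lambda$ coefficient. This is false, and the paper says so immediately after the theorem: each tail is an infinite Laurent \emph{series} in the $\Q_{\beta,1}^{-1}$ with no constant term, with coefficients that are Laurent polynomials in the $\Q_{\gamma,0}$ (see the discussion around \eqref{tailbits}--\eqref{xialdef}). Pairing the polynomial part against these series under $CT$ is therefore not coefficient extraction against a dual monomial basis; showing that it nevertheless computes $\dim\Hom_\g(\M_\bn[j],V(\lambda))$ graded by $j$ is essentially the content of the theorem, so your second step assumes what is to be proven.

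The second, more serious gap is your proposed resolution of the ``main obstacle'': compute the constant term in the commutative cluster variables and then ``reinstate the $v$-grading via the normal-ordering prefactor.'' The $v$-powers generated by normal ordering are not a single global prefactor. Different monomials in the Laurent expansion of $\prod_{i,\al}(\Q_{\al,i})^{n_i^{(\al)}}$ get reordered differently and hence acquire \emph{different} powers of $v$, and it is precisely these relative powers that deform the ordinary binomial counts of the commutative computation into the Gaussian binomials $\qbin{p_i^{(\al)}+m_i^{(\al)}}{m_i^{(\al)}}$ of the $M$-sum of \cite{KR}. A commutative constant term followed by one overall power of $v$ can only reproduce $M_{\bn,\lambda}(1)$, i.e.\ the ungraded identity of \cite{DFK08}, not the graded one; the explicit exponent displayed in \eqref{gtensopro} is a normalization, not the carrier of the grading. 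Relatedly, your step 1 claim that the monomials of the explicit Laurent expansion are ``naturally indexed'' by the fermionic data $\{m_i^{(\al)}\}$ is an unproved bijection (the closed-form solutions are indexed by paths, not by fermionic occupation numbers), and your plan never invokes the quantum $Q$-system relations \eqref{qQsys} themselves. In \cite{qKR} those relations, together with the polynomiality property recalled here as Lemma \ref{poly}, are exactly what allow the sums over the $m_i^{(\al)}$ in the fermionic formula to be performed and telescoped into the constant-term expression; bypassing them means bypassing the mechanism that makes the identity true.
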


We will use both variables, $v$ and $q$ throughout the paper. One can show that the polynomial $M_{\bn,\lambda}$ is a function of $q$ only.

Note that in Equation \eqref{gtensopro}, the monomial involving the $\Q_{\al,i}$ becomes a {\em polynomial} of the initial data variables $\Q_{\al,1}$ after the evaluation step of Definition \ref{evalmap}, as a consequence of the polynomiality Lemma \ref{poly}, and the obvious property that $ev(f g)=ev(ev(f) g)$. However 
the ``tails" $\lim_{k\to\infty} 
\Q_{\al,k}\Q_{\al,k+1}^{-1}$ are Laurent series in $Q_{\al,1}^{-1}$, for $\al\in [1,r]$. 
Therefore, the constant terms pick only finitely many contributions.

Finally, \eqref{gtensopro} may be translated into an analogous expression for
the graded character, by using:
\begin{equation}\label{char}
\chi_\bn(q^{-1};\bz)=\sum_{\lambda\in P^+} M_{\bn,\lambda}(q^{-1})\, s_\lambda(\bz) 
\end{equation}
where $s_\lambda(\bz)={\rm ch}_\bz\, V(\lambda)$ is a Schur function, $P^+$ the set of positive weights of $\g$.

\section{From conserved quantities to difference equations}
\label{consdiff}
In this section, we derive difference equations for the graded characters $\chi_\bn(q;\bz)$, by using the explicit conserved quantities of the quantum $Q$-system \eqref{qQsys}, viewed as a discrete, integrable evolution equation in $\mathcal A$ \cite{DFKnoncom}.

\subsection{Conserved quantities of the quantum $Q$-system}
\label{conserved}
Discrete integrability of the system \eqref{qQsys} means that there are $r$ discrete, algebraically
independent conserved quantities, $C_m$, $m=1,2,...,r$ which commute with each other. Moreover, these are coefficients of a linear recursion relation satisfied by the generators
$\{\Q_{1,k}\}$. 

Let us recall the explicit formulas for the conserved quantities \cite{DFKnoncom}.
For each $n\in \Z$, define the {\em weights} $y_i(n)\in \mathcal A$ as the following ordered monomials:
\begin{eqnarray}\label{weights}
y_{2\al-1}(n)&=&\Q_{\al,n+1}\Q_{\al-1,n+1}^{-1}\Q_{\al,n}^{-1}\Q_{\al-1,n} ,\qquad (\al=1,2,...,r+1)\label{yodd}; \\
y_{2\al}(n)&=&-\Q_{\al+1,n+1}\Q_{\al,n+1}^{-1}\Q_{\al,n}^{-1}\Q_{\al-1,n}, \qquad (\al=1,2,...,r) \label{yeven}.
\end{eqnarray}

\begin{thm}{\cite{DFKnoncom}}\label{consQ} Modulo the quantum Q-system, the following elements of $\mathcal A$ are independent of $n$:
\begin{equation}
C_m:=\sum_{1\leq i_1<i_2\cdots <i_m\leq 2r+1\atop {i_\ell<i_{\ell+1}-1\ {\rm if}\ i_\ell \ {\rm odd}\atop
i_\ell<i_{\ell+1}-2\ {\rm if}\ i_\ell \ {\rm even} }} y_{i_m}(n)y_{i_{m-1}}(n)\cdots y_{i_1}(n)\qquad (m=0,1,...,r+1).
\label{conservedm}
\end{equation}
\end{thm}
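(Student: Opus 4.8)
The plan is to realize the quantities $C_m$ as the coefficients of a \emph{quantum characteristic polynomial} (a continuant) of an explicit Lax matrix $M(n)$ assembled from the weights $y_i(n)$, and then to prove their $n$-independence by exhibiting a discrete zero-curvature (Lax) equation that conjugates $M(n)$ into $M(n+1)$. First I would build $M(n)$ as an ordered product of $2r+1$ elementary (bidiagonal/companion-type) factors $L_i(n)$, one for each index $i\in[1,2r+1]$, each differing from the identity only through the single weight $y_i(n)$. The generating function $\sum_m C_m\, t^m$ should then be a continuant, and expanding it produces exactly sums of ordered monomials $y_{i_m}(n)\cdots y_{i_1}(n)$ over increasing index sequences: the nearest-neighbor structure of the tridiagonal factors forces precisely the gap conditions in \eqref{conservedm}, since selecting an odd-index weight blocks the next position while selecting an even-index weight blocks the next two. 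Verifying this combinatorial identification, and confirming that left-to-right matrix multiplication reproduces the descending order $y_{i_m}\cdots y_{i_1}$ of \eqref{conservedm}, is the first step.

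Next I would establish the discrete Lax equation. The $Q$-system relation \eqref{qQsys} is precisely what re-expresses the $n$-th weights \eqref{yodd}--\eqref{yeven} in terms of the $(n+1)$-th weights, while the $q$-commutation relations \eqref{commualbet} supply the powers of $v$ needed to transport factors past one another. Using these, I expect to produce an invertible gauge matrix $S(n)$, expressible as a monomial in the generators $\Q_{\al,n}$, satisfying $M(n+1)\,S(n)=S(n)\,M(n)$.

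From $M(n+1)=S(n)\,M(n)\,S(n)^{-1}$ I would then conclude that $M(n+1)$ and $M(n)$ share the same quantum characteristic polynomial, hence $C_m(n+1)=C_m(n)$ for every $m\in[0,r+1]$, which is the assertion of the theorem.

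The hard part will be the non-commutativity: for a matrix over a generic non-commutative ring, the coefficients of a row- or column-ordered quantum determinant are \emph{not} conjugation-invariant. The crux is therefore to check that the specific relations \eqref{commualbet} among the entries of $M(n)$ make the chosen ordered continuant a genuine invariant under the gauge transformation $S(n)$ --- equivalently, that every reordering needed to pass from $S(n)M(n)$ to $M(n+1)S(n)$ generates only the powers of $v$ already built into the definitions \eqref{yodd}--\eqref{yeven}, and that the constraint $|k-k'|\le|\al-\beta|+1$ in \eqref{commualbet} is never violated by the factors one must commute. Tracking these $v$-powers is where the real work lies; should a clean gauge matrix prove awkward to guess, the fallback is a direct term-by-term telescoping of $C_m(n+1)-C_m(n)$ via \eqref{qQsys}, which is combinatorially heavier but avoids the invariance subtlety.
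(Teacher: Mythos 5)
First, a point of order: this paper does not prove Theorem \ref{consQ} at all. The theorem is imported verbatim from \cite{DFKnoncom}, and the only things proved in the present paper are downstream consequences: the explicit form of $C_1$ and the $n\to\infty$ evaluation of Lemma \ref{limlem}. So your proposal can only be measured against the cited proof, and against its own internal soundness.

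On its own terms the proposal has a genuine gap, and it sits exactly where you placed your caveat. The central inference --- ``$M(n+1)=S(n)\,M(n)\,S(n)^{-1}$, hence $M(n+1)$ and $M(n)$ have the same quantum characteristic polynomial'' --- is false over a noncommutative ring: the coefficients of any row- or column-ordered expansion are not conjugation invariants, and nothing in \eqref{commualbet} is shown to rescue this. Your plan therefore reduces the theorem to precisely the statement you have not proved; flagging the crux is not the same as supplying it. Compounding this, the gauge matrix $S(n)$ is never constructed, and its existence as a monomial in the $\Q_{\al,n}$'s is itself a nontrivial claim: the relation \eqref{qQsys} couples three times $n-1,n,n+1$ and does not obviously factor as a similarity transformation of a single companion-type product. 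The actual mechanism in \cite{DFKnoncom} avoids naive conjugation invariance altogether. As the present paper states just above the theorem, the $C_m$ are the coefficients of a linear recursion relation satisfied by the family $\{\Q_{1,k}\}$; in \cite{DFKnoncom} this is established through the path/continued-fraction (quasideterminant) expansion of a generating function with weights $y_i(n)$, and $n$-independence follows because re-expanding the same object at base point $n+1$ instead of $n$ is governed by explicit non-commutative continued-fraction rearrangement lemmas --- the invariance is carried by those rearrangement moves, with all powers of $v$ accounted for, rather than asserted as spectral invariance of a Lax matrix. Your fallback (term-by-term telescoping of $C_m(n+1)-C_m(n)$ using \eqref{qQsys} and \eqref{commualbet}) is closer in spirit to what must actually be checked, but as written neither route is carried out, so the proposal remains a strategy rather than a proof.
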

In particular, $C_0=1$, and $C_{r+1}=v^{\frac{r(r+1)}{2}}$, and
$C_1,...,C_r$ are algebraically independent, commuting elements of $\mathcal A$, 
which are the conserved quantities of the quantum $Q$-system.
Of particular interest is the first conserved quantity:
\begin{lemma}
The conserved quantity $C_1$ is
\begin{equation}
C_1=\sum_{\al=1}^{r+1} \Q_{\al,n}^{-1}\Q_{\al-1,n}\left(v^r \Q_{\al,n+1}\Q_{\al-1,n+1}^{-1}-
v^{-1} \Q_{\al+1,n+1}\Q_{\al,n+1}^{-1}\right). \label{finacone}.
\end{equation}
\end{lemma}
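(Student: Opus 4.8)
The plan is to specialize the general formula \eqref{conservedm} of Theorem \ref{consQ} to $m=1$ and then rewrite each summand by commuting the level-$n$ generators to the left of the level-$(n+1)$ generators.

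First I would observe that for $m=1$ the summation index set in \eqref{conservedm} consists of a single index $i_1\in[1,2r+1]$, for which the ordering conditions are vacuous, so that $C_1=\sum_{i=1}^{2r+1}y_i(n)$. Splitting this into odd and even weights and inserting the explicit monomials \eqref{yodd} and \eqref{yeven} gives
\begin{equation*}
C_1=\sum_{\al=1}^{r+1}\Q_{\al,n+1}\Q_{\al-1,n+1}^{-1}\Q_{\al,n}^{-1}\Q_{\al-1,n}-\sum_{\al=1}^{r}\Q_{\al+1,n+1}\Q_{\al,n+1}^{-1}\Q_{\al,n}^{-1}\Q_{\al-1,n}.
\end{equation*}
Each summand is thus a product of two generators at level $n+1$ sitting to the left of two generators at level $n$, whereas the target \eqref{finacone} has the level-$n$ pair $\Q_{\al,n}^{-1}\Q_{\al-1,n}$ factored out on the left.

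The main step is the reordering. I would use the commutation relations \eqref{commualbet} with $k'-k=1$; since $|n-(n+1)|=1\le|\al-\beta|+1$ for every pair of indices, these relations apply to all the factors occurring, and after accounting for inverses they take the form $\Q_{b,n+1}^{t}\Q_{a,n}^{s}=v^{-st\Lambda_{a,b}}\Q_{a,n}^{s}\Q_{b,n+1}^{t}$ for $s,t\in\{\pm1\}$. Because generators at the same level commute, moving the level-$n$ pair as a block past the level-$(n+1)$ pair produces the product of the four pairwise scalars. For the odd weight the resulting exponent is $\Lambda_{\al,\al}+\Lambda_{\al-1,\al-1}-2\Lambda_{\al,\al-1}$, and for the even weight it is $\Lambda_{\al,\al+1}+\Lambda_{\al-1,\al}-\Lambda_{\al,\al}-\Lambda_{\al-1,\al+1}$.

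The remaining step is to evaluate these two combinations with the explicit matrix \eqref{lambdadef}. A short computation using $\Lambda_{a,b}=\min(a,b)(r+1-\max(a,b))$ collapses them to the constants $r$ and $-1$, respectively, which are exactly the prefactors $v^r$ and $v^{-1}$ appearing in \eqref{finacone}; factoring out $\Q_{\al,n}^{-1}\Q_{\al-1,n}$ then yields the claimed expression. Finally I would reconcile the summation ranges: the even sum runs naturally over $\al\in[1,r]$, but it may be written up to $\al=r+1$ as in \eqref{finacone} because the extra term carries a factor $\Q_{r+2,n+1}$ which is absent, while the boundary conventions $\Q_{0,k}=\Q_{r+1,k}=1$ handle the endpoints of the odd sum. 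I expect the only delicate points to be the sign bookkeeping when commuting inverse generators past one another, and the verification that the two $\Lambda$-combinations telescope to the clean constants $r$ and $-1$; the rest is routine.
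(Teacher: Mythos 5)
Your proposal is correct and follows essentially the same route as the paper's proof: specialize \eqref{conservedm} to $m=1$ to get $C_1=\sum_\al y_{2\al-1}(n)+\sum_\al y_{2\al}(n)$, reorder each monomial via \eqref{commualbet} so the level-$n$ pair sits on the left, and reduce the resulting exponents $\Lambda_{\al,\al}-2\Lambda_{\al,\al-1}+\Lambda_{\al-1,\al-1}$ and $\Lambda_{\al,\al+1}-\Lambda_{\al-1,\al+1}-\Lambda_{\al,\al}+\Lambda_{\al-1,\al}$ to $r$ and $-1$ using \eqref{lambdadef}. Your bookkeeping of the commutation factors for inverse generators and your handling of the boundary terms (via $\Q_{0,k}=\Q_{r+1,k}=1$ and the absence of $\Q_{r+2,n+1}$) are both sound.
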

\begin{proof}
Fix $n$ and use the expression for the weights \eqref{weights} as well as the commutation relations \eqref{commualbet}:
\begin{eqnarray}
C_1&=&\sum_{\al=1}^{r+1} y_{2\al-1}(n)+\sum_{\al=1}^r y_{2\al}(n)\nonumber \\
&=&\sum_{\al=1}^{r+1} v^{\Lambda_{\al,\al}-2\Lambda_{\al,\al-1}+\Lambda_{\al-1,\al-1}}
\Q_{\al,n}^{-1}\Q_{\al-1,n}\Q_{\al,n+1}\Q_{\al-1,n+1}^{-1}\nonumber \\
&&-\sum_{\al=1}^r 
v^{\Lambda_{\al,\al+1}-\Lambda_{\al-1,\al+1}-\Lambda_{\al,\al}+\Lambda_{\al-1,\al}} 
\Q_{\al,n}^{-1}\Q_{\al-1,n}\Q_{\al+1,n+1}\Q_{\al,n+1}^{-1} \nonumber .\\
\end{eqnarray}
The Lemma follows from the identities:
\begin{equation*}
\Lambda_{\al,\al}-2\Lambda_{\al,\al-1}+\Lambda_{\al-1,\al-1}=r,\quad 
\Lambda_{\al,\al+1}-\Lambda_{\al-1,\al+1}-\Lambda_{\al,\al}+\Lambda_{\al-1,\al}=-1.
\end{equation*}
\end{proof}

Define the elements of $\mathcal A$
\begin{equation}\label{tailbits}
\theta_{\al,k}=\Q_{\al,k}\Q_{\al,k+1}^{-1},\qquad \xi_{\al,k}=v^{\frac{\Lambda_{\al,\al}}{2}}\theta_{\al,k}.
\end{equation}
In \cite{qKR}, we showed that $\xi_{\al,k}$ is expressible as a formal power series of the variables $\Q_{\al,1}^{-1}$ with no constant term, with coefficients
which are Laurent polynomials of the $\Q_{\beta,0}$'s, and that the limit $k\to \infty$ exists. We denote it by  
\begin{equation}\label{xialdef}\xi_\al=\lim_{k\to\infty} \xi_{\al,k}.
\end{equation}

As the conserved quantities $C_m$ are independent of $n$, they may be evaluated in the limit $n\to \infty$.
We have:
\begin{lemma}\label{limlem}
\begin{equation}\label{yinf}y_{2\al}:=\lim_{n\to\infty} y_{2\al}(n)=0 \quad {\rm and}\quad 
y_{2\al-1}:=\lim_{n\to\infty} y_{2\al-1}(n)=v^{\frac{r}{2}}\, \xi_{\al-1}\xi_\al^{-1} 
\end{equation}
Moreover, the odd variables $y_1,y_3,...,y_{2r+1}$ commute among themselves, and $C_m$ is their $m$-th 
elementary symmetric function:
\begin{equation}C_m=e_m(y_1,y_3,...,y_{2r+1})
=v^{mr/2}\, e_m(\xi_0\xi_1^{-1},\xi_2\xi_3^{-1},...,\xi_{2r}\xi_{2r+1}^{-1})\label{usefulxi}
\end{equation}
\end{lemma}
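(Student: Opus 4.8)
The plan is to exploit that each $C_m$ is independent of $n$ (Theorem \ref{consQ}), so it may be computed in the limit $n\to\infty$, where we have the well-defined limits $\xi_\al=\lim_{k\to\infty}\xi_{\al,k}$ of \eqref{xialdef}. Concretely I would (i) evaluate the limits of the individual weights $y_i(n)$, (ii) establish that the odd ones commute, and (iii) check that in the limit the restricted sum \eqref{conservedm} defining $C_m$ collapses to the full elementary symmetric polynomial of the odd weights. Throughout I use the invertibility of the $\xi_\al$ and the convergence $\xi_{\al,n}\to\xi_\al$ established in \cite{qKR}.

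For the odd weights I would rewrite $y_{2\al-1}(n)=\Q_{\al,n+1}\Q_{\al-1,n+1}^{-1}\Q_{\al,n}^{-1}\Q_{\al-1,n}$ using the commutation relations \eqref{commualbet} to move $\Q_{\al,n}^{-1}$ to the left of $\Q_{\al-1,n+1}^{-1}$, thereby grouping the two index-$\al$ factors and the two index-$(\al-1)$ factors. This produces $v^{-\Lambda_{\al-1,\al}+\Lambda_{\al-1,\al-1}}\theta_{\al,n}^{-1}\theta_{\al-1,n}$, and substituting $\theta=v^{-\Lambda/2}\xi$ from \eqref{tailbits} and collecting exponents with the identity $\Lambda_{\al,\al}-2\Lambda_{\al,\al-1}+\Lambda_{\al-1,\al-1}=r$ (already used for $C_1$) yields $y_{2\al-1}(n)=v^{r/2}\xi_{\al,n}^{-1}\xi_{\al-1,n}$. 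Letting $n\to\infty$ gives the stated formula; the boundary cases $\al=1,r+1$ follow from $\Q_{0,k}=\Q_{r+1,k}=1$ together with $\Lambda_{1,1}=\Lambda_{r,r}=r$ from \eqref{lambdadef}.

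The vanishing of the even weights is the crux, and the one place where the $Q$-system relation \eqref{qQsys}, rather than mere commutation, enters. The key observation is that $y_{2\al}(n)$ and $y_{2\al-1}(n)$ share the common right factor $\Q_{\al,n}^{-1}\Q_{\al-1,n}$, so the ratio $y_{2\al}(n)\,y_{2\al-1}(n)^{-1}$ telescopes to the purely level-$(n+1)$ expression $-\Q_{\al+1,n+1}\Q_{\al-1,n+1}\Q_{\al,n+1}^{-2}$, whose factors all commute. Applying \eqref{qQsys} to replace $\Q_{\al+1,n+1}\Q_{\al-1,n+1}$ by $\Q_{\al,n+1}^2-v^{\Lambda_{\al,\al}}\Q_{\al,n+2}\Q_{\al,n}$ rewrites this ratio as $-1+\theta_{\al,n+1}^{-1}\theta_{\al,n}$. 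Since $\theta_{\al,n}\to v^{-\Lambda_{\al,\al}/2}\xi_\al$ is invertible in the limit, the product $\theta_{\al,n+1}^{-1}\theta_{\al,n}\to 1$, so the ratio tends to $0$; as $y_{2\al-1}(n)$ has a finite limit, $y_{2\al}(n)\to 0$. I expect the bookkeeping of $v$-exponents in this telescoping to be the most error-prone part.

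Finally, for commutativity and the symmetric-function identity, I would first show that the $\theta_{\al,n}$, hence the $\xi_{\al,n}$ and their limits $\xi_\al$, pairwise commute at fixed $n$: since all generators at a fixed level commute, a short computation with \eqref{commualbet} shows the cross $v$-factors in $\theta_{\al,n}\theta_{\beta,n}$ and $\theta_{\beta,n}\theta_{\al,n}$ coincide. Consequently each $y_{2\al-1}=v^{r/2}\xi_{\al-1}\xi_\al^{-1}$ is built from commuting factors, so the odd weights commute. In the sum \eqref{conservedm} for $C_m$, passing to $n\to\infty$ annihilates every term containing an even index; for a term with all indices odd the admissibility constraint $i_\ell<i_{\ell+1}-1$ is automatic, so precisely the $\binom{r+1}{m}$ ordered products of distinct odd weights survive. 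Commutativity then identifies this sum with $e_m(y_1,y_3,\dots,y_{2r+1})$, and substituting $y_{2\al-1}=v^{r/2}\xi_{\al-1}\xi_\al^{-1}$ and factoring $v^{mr/2}$ from the $m$ arguments gives the closed form; as a consistency check the telescoping product of all $r+1$ odd weights recovers $C_{r+1}=v^{r(r+1)/2}$.
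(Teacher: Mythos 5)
Your proof is correct and follows essentially the same route as the paper: both rewrite the odd weights as $v^{r/2}\,\xi_{\al,n}^{-1}\xi_{\al-1,n}$ via the commutation relations, use the quantum $Q$-system relation to factor each even weight as (an odd weight) times a ratio of the form $\xi_{\al,\cdot}^{-1}\xi_{\al,\cdot}-1$ that tends to zero, and then observe that only the all-odd terms (which are automatically admissible and mutually commuting) survive in $C_m$, yielding the elementary symmetric function. The only cosmetic difference is that you extract the vanishing factor by telescoping against $y_{2\al-1}(n)$ and applying the $Q$-system at level $n+1$, whereas the paper rewrites $y_{2\al}(n)$ directly using the relation at level $n$.
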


\begin{proof}
Using the commutation relations \eqref{commualbet}, we note that 
\begin{eqnarray*}y_{2\al-1}(n)&=&v^{\frac{r}{2}}\, \xi_{\al,n}^{-1}\, \xi_{\al-1,n} \\
y_{2\al}(n)&=& v^{\frac{r}{2}}\, \xi_{\al+1,n}^{-1}\, \xi_{\al,n} (\xi_{\al,n}\xi_{\al,n-1}^{-1}-1)
\end{eqnarray*}
where in the last line we have used the quantum $Q$-system relation to rewrite
$$-\Q_{\al+1,n}\Q_{\al-1,n}\Q_{\al,n}^{-2}=1-v^{\Lambda_{\al,\al}}\Q_{\al,n+1}\Q_{\al-1,n-1}\Q_{\al,n}^{-2}=
1-\xi_{\al,n}\xi_{\al,n-1}^{-1}$$
The limits \eqref{yinf} follow from \eqref{xialdef}. Finally the commutations follow from \eqref{commualbet},
and \eqref{conservedm} clearly reduces to the $m$-th elementary symmetric function of the odd $y$'s.
\end{proof}

\subsection{Generating series}
\label{genef}

We define generating series for the characters \eqref{char}.
First, define $\tau(\bz)=\tau(z_1,...,z_{r+1})$ as
\begin{eqnarray}\label{taudef}
\tau(\bz)&:=&v^{{1\over 2}\sum_\al \Lambda_{\al,\al}+\sum_{\al<\beta}\Lambda_{\al,\beta}}\sum_{\lambda\in P^+} 
\prod_{\al=1}^r (\xi_\al)^{\ell_\al+1} s_\lambda(\bz) \nonumber \\
&=& v^{\frac{r+1}{4}{r+2\choose 3}} \sum_{\lambda\in P^+} 
\prod_{\al=1}^r (\xi_\al)^{\ell_\al+1} s_\lambda(\bz),\end{eqnarray}
with $\xi_\al$ as in \eqref{xialdef}.
Here, $P^+$ is the set of dominant integral weights of $\sl_{r+1}$, whereas $s_\lambda(\bz)$ is the Schur function parameterized by partitions $\lambda$ of length $r+1$ or less, with the usual correspondence between $\sl_{r+1}$ weights and the set of such partitions, that is, $\ell_\al=\lambda_\al-\lambda_{\al+1}$.

Fix $k\geq 1$ and define the generating series of characters as a series in the indeterminates $\bu=\{u_{\al,i}: \al\in [1,r], i\in [1,k]\}$:
\begin{equation}\label{defGky}
G^{(k)}(\bu)=\phi\left(
\left(\prod_{\al=1}^r \Q_{\al,1} \right)\left(\prod_{i=1}^k\left( \prod_{\al=1}^r \frac{1}{1-u_{\al,i}\Q_{\al,i} }\right) \right)
\tau(\bz) \right).
\end{equation}
Here, each rational function is defined to be a series in the variables $u_{\al,i}$, and the product over $i$ is ordered from left to right. 

The coefficient of $\prod_{\al,i}u_{\al,i}^{n_i^{(\al)}}$ in the formal series expansion of $G^{(k)}(\bu)$ is defined to be $G_\bn^{(k)}$, with $\bn=\{n_{\al,i}\}_{\al\in [1,n];i\in [1,k]}$:
\begin{equation}\label{defGkn}G^{(k)}_\bn=\phi\left(
\left(\prod_{\al=1}^r \Q_{\al,1} \right)\left(\prod_{i=1}^k\left( \prod_{\al=1}^r \Q_{\al,i}^{n_i^{(\al)}}\right) \right)
\tau(\bz) \right)
\end{equation}
with the function $\phi$  defined in \ref{phidef}.
The normalization of $\tau(\bz)$ in \eqref{taudef} is chosen so that $G_0^{(1)}=G^{(1)}(0)=1$.

Comparing with Equation \eqref{gtensopro}, using the commutation relations between 
$\Q_{\al,0}$ and $\Q_{\beta,1}$, we see that these coefficients are the renormalized characters of Equation \eqref{char}:
\begin{equation}\label{chitoG}
\chi_{\bn}(q^{-1},\bz)=v^{\sum_{\al,\beta,i}n_i^{(\al)}\Lambda_{\al,\beta}+
{1\over 2}\bn\cdot (\Lambda\otimes A)\bn}\,G^{(k)}_\bn(\bz).
\end{equation}

\subsection{The action of the conserved quantities at infinity}
\label{Contau}

We have the following theorem for the action of the conserved quantities $C_m$ on $\tau(\bz). $
Let $e_m(\bz)$ denote the $m$th elementary symmetric function in the $r+1$ variables $z_1,...,z_{r+1}$. Then
\begin{thm}
The conserved quantities \eqref{conservedm} of the $A_r$ quantum $Q$-system act on the function $\tau(\bz)$ as:
\begin{equation}\label{actioncm}
C_m\, \tau(\bz)=v^{\frac{m r}{2}}\, e_m(\bz) \, \tau(\bz)+ R_m(\bz)
\end{equation}
where $R_m(\bz)$ is a sum of power series of the $\xi_\al$, with each of the summands independent 
of at least one of the $\{\xi_\al\}_{\al\in [1,r]}$.
\end{thm}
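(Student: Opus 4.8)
The plan is to compute $C_m\,\tau(\bz)$ directly using the factored form of the conserved quantities from Lemma~\ref{limlem}. Since each $C_m$ is independent of $n$, I would evaluate it in the limit $n\to\infty$, where Lemma~\ref{limlem} gives $C_m=v^{mr/2}e_m(\xi_0\xi_1^{-1},\xi_2\xi_3^{-1},\dots,\xi_{2r}\xi_{2r+1}^{-1})$. The key structural observation is that the operators $\xi_{\al-1}\xi_\al^{-1}$ act, up to the $v$-prefactor, as raising/shift operators on the sum $\sum_{\lambda\in P^+}\prod_\al(\xi_\al)^{\ell_\al+1}s_\lambda(\bz)$ defining $\tau(\bz)$. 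So the first step is to understand how a single factor $\xi_{\al-1}\xi_\al^{-1}$ rewrites the exponents $\ell_\beta+1$ appearing in the monomial $\prod_\beta(\xi_\beta)^{\ell_\beta+1}$, picking up a shift in $\ell_{\al-1}$ and $\ell_\al$, which corresponds to a shift of the weight $\lambda$.

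Next I would match this against the classical Pieri/branching rule for Schur functions. The factor $e_m(\bz)\,s_\lambda(\bz)=\sum_\mu s_\mu(\bz)$ sums over $\mu$ obtained from $\lambda$ by adding a vertical strip of size $m$, i.e.\ adding boxes to $m$ distinct rows. On the other side, applying $C_m$ term by term, the degree-$m$ elementary symmetric function $e_m$ in the commuting variables $\xi_{2i}\xi_{2i+1}^{-1}$ produces a sum over $m$-subsets of $\{0,1,\dots,r\}$, each subset giving a product of shift operators that relocates $m$ of the weight-differences. The plan is to show that reindexing the Schur functions so that their arguments align with the shifted weights reproduces exactly the vertical-strip Pieri expansion of $e_m(\bz)\tau(\bz)$, with the $v^{mr/2}$ prefactor coming out cleanly from Lemma~\ref{limlem}. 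The crucial point is that the variables $\xi_0$ and $\xi_{r+1}$ are ``boundary'' variables — recall $\Q_{0,k}=\Q_{r+1,k}=1$, so $\xi_0$ and $\xi_{r+1}$ are trivial — and when a chosen $m$-subset forces a shift at the boundary, the corresponding term either matches a genuine box-addition or falls outside $P^+$.

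The remainder $R_m(\bz)$ then collects precisely the terms where the telescoping fails to reproduce a clean Pieri summand. Because each product of shift operators coming from a fixed $m$-subset involves only a proper subset of the $\xi_\al$, every such defective summand is independent of at least one $\xi_\al$; this is exactly the asserted structure of $R_m$. So the plan for the $R_m$ bound is not to compute it explicitly but to read off, from the subset structure of $e_m(\xi_0\xi_1^{-1},\dots)$, that each monomial term touches at most $2m<2(r+1)$ of the $\xi$-variables and hence omits at least one.

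I expect the main obstacle to be the bookkeeping at the boundaries and the careful reindexing of the Schur-function sum: one must verify that the shift operators $\xi_{\al-1}\xi_\al^{-1}$ acting on $\prod_\beta(\xi_\beta)^{\ell_\beta+1}$, after the substitution of $\phi$ (that is, after normal ordering, evaluation at $\Q_{\al,0}=1$, and extraction of the constant term in $\Q_{\al,1}$), land on weights that stay dominant and match the vertical-strip condition, while the boundary cases that leave $P^+$ get absorbed into $R_m$. Isolating which terms are ``good'' (clean Pieri) versus ``bad'' (into $R_m$) is the delicate part, and it hinges on the commutation relations \eqref{commualbet} governing how the $\xi_\al$ reorder past the leading $\prod_\al\Q_{\al,1}$ inside $\phi$.
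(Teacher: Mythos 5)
Your overall strategy coincides with the paper's: evaluate $C_m$ at $n\to\infty$ via Lemma \ref{limlem} (equation \eqref{usefulxi}), view each term of the elementary symmetric function in the commuting variables $\xi_{\al-1}\xi_\al^{-1}$ as a shift of the exponents in $\prod_\al \xi_\al^{\ell_\al+1}$, re-index the sum over $\lambda$, and invoke the Pieri rule $\sum_{i_1<\cdots<i_m} s_{\lambda+\epsilon_{i_1}+\cdots+\epsilon_{i_m}}(\bz)=e_m(\bz)\,s_\lambda(\bz)$. Up to that point the plan is sound and matches the paper.

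The gap is in your justification of the structure of $R_m$, which is the actual content of the theorem. You argue that each summand of $R_m$ omits some $\xi_\al$ because ``each product of shift operators coming from a fixed $m$-subset involves only a proper subset of the $\xi_\al$,'' each monomial ``touching at most $2m<2(r+1)$ of the $\xi$-variables.'' This fails on two counts. First, as a counting statement it is false: there are only $r$ nontrivial variables $\xi_1,\dots,\xi_r$ (since $\xi_0=\xi_{r+1}=1$), and already for $r=2$, $m=1$ the term $\xi_1\xi_2^{-1}$ touches all of them; in general $2m$ need not be smaller than $r$. Second, and more fundamentally, it is a category error: the summands of $R_m$ are terms of the series obtained after multiplying $\tau(\bz)$ --- which contains every $\xi_\al$ --- by these operators, not the operators themselves. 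If independence followed from the support of the operators, then the main term $v^{\frac{mr}{2}}e_m(\bz)\tau(\bz)$, produced by exactly the same operators, would also have to omit variables, which it does not. The correct mechanism, absent from your proposal, is a boundary cancellation of exponents: multiplying $\prod_\al\xi_\al^{\ell_\al+1}$ by $\prod_j\xi_{i_j-1}\xi_{i_j}^{-1}$ and re-indexing $\lambda'=\lambda-\sum_j\epsilon_{i_j}$, a term fails to be absorbed into the re-indexed sum over $\lambda'\in P^+$ precisely when some new exponent $\ell'_{i_j}+1$ equals $0$ (the case $\ell_{i_j}=0$), and that is exactly the statement that the term is independent of $\xi_{i_j}$. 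These boundary terms are the counterterms constituting $R_m$; without this observation, the property of $R_m$ that makes its constant term vanish in Corollary \ref{conscor} is unproved. A minor further point: the theorem concerns the action on the formal series $\tau(\bz)$, where the $\xi_\al$ commute by Lemma \ref{limlem}; the normal ordering past $\prod_\al\Q_{\al,1}$ inside $\phi$ that your last paragraph worries about belongs to the corollary, not to this proof.
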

\begin{proof}
Recall the expression \eqref{taudef} for $\tau(\bz)$.
Using \eqref{usefulxi}, we write explicitly:
\begin{eqnarray*}
v^{-\frac{m r}{2}}C_m\, v^{-\frac{r+1}{4}{r+2\choose 3}}\tau(\bz)&=& 
\sum_{\ell_1,..,\ell_r\geq 0} \prod_{\al=1}^r \xi_\al^{\ell_\al+1}\sum_{1\leq i_1<i_2<\cdots <i_m} 
s_{\ell_1,..\ell_{i_1-2},\ell_{i_1-1}-1,\ell_{i_1}+1,...,\ell_{i_m-1}-1,\ell_{i_m}+1,...\ell_r}(\bz) \\
&=&  \sum_{\ell_1,..,\ell_r} \prod_{\al=1}^r \xi_\al^{\ell_\al+1}\sum_{1\leq i_1<i_2<\cdots <i_m} 
s_{\lambda+\epsilon_{i_1}+\cdots +\epsilon_{i_m}}(\bz)+u_m(\bz)\\
&=&  \sum_{\ell_1,..,\ell_r} \prod_{\al=1}^r \xi_\al^{\ell_\al+1} e_m(\bz) \, s_\lambda(\bz) +u_m(\bz)
=e_m(\bz)\, v^{-\frac{r+1}{4}{r+2\choose 3}}\tau(\bz)+u_m(\bz)
\end{eqnarray*}
where $u_m(\bz)$ has the same property as $R_m(\bz)$ (it adds counterterms for all cases where some of the $\ell_{i_j}$'s vanish in the first term of the first line, and each such term is independent of the corresponding $\xi_{i_j}$). 
We have used the fact that $\epsilon_i=\omega_i-\omega_{i-1}$ correspond to 
$\ell_\al=\delta_{\al,i}-\delta_{\al,i-1}$ and transcribed the result using the Pieri rule for $SL_{r+1}$.
The theorem follows.
\end{proof}

We deduce the following:
\begin{cor}\label{conscor}
When evaluated inside the generating function \eqref{defGky}, each conserved quantity $C_m$ acts on $\tau(\bz)$ as the scalar 
$v^{\frac{m r}{2}}e_m(\bz)$,
namely:
\begin{equation}\label{insert}
\phi\left(
\left(\prod_{\al=1}^r \Q_{\al,1} \right)\left(\prod_{i=1}^k\prod_{\al=1}^r \frac{1}{1-u_{\al,i}\Q_{\al,i}} \right) C_m
\tau(\bz)
\right)=v^{\frac{m r}{2}}e_m(\bz) \, G^{(k)}(\bu)
\end{equation}
\end{cor}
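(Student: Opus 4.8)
The plan is to feed the decomposition supplied by the preceding theorem, $C_m\,\tau(\bz)=v^{\frac{mr}{2}}e_m(\bz)\,\tau(\bz)+R_m(\bz)$, into the generating function \eqref{defGky} and to show that the remainder $R_m(\bz)$ is annihilated by $\phi$ once the prefactor $\prod_\beta\Q_{\beta,1}$ and the middle product $\prod_{i}\prod_{\al}(1-u_{\al,i}\Q_{\al,i})^{-1}$ are attached. First I would write the left-hand side of \eqref{insert} as
\[
\phi\!\left(\Big(\prod_{\al=1}^{r}\Q_{\al,1}\Big)\Big(\prod_{i=1}^{k}\prod_{\al=1}^{r}\frac{1}{1-u_{\al,i}\Q_{\al,i}}\Big)\Big(v^{\frac{mr}{2}}e_m(\bz)\,\tau(\bz)+R_m(\bz)\Big)\right).
\]
Since $e_m(\bz)$ is a polynomial in the central character variables $z_1,\dots,z_{r+1}$, it commutes with every generator of $\cA$ and with $v$, so by linearity of $\phi$ the first piece pulls out to $v^{\frac{mr}{2}}e_m(\bz)\,\phi(\cdots\tau(\bz))=v^{\frac{mr}{2}}e_m(\bz)\,G^{(k)}(\bu)$, which is precisely the asserted right-hand side. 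Everything therefore reduces to the vanishing of the remainder,
\[
\phi\!\left(\Big(\prod_{\al=1}^{r}\Q_{\al,1}\Big)\Big(\prod_{i=1}^{k}\prod_{\al=1}^{r}\frac{1}{1-u_{\al,i}\Q_{\al,i}}\Big)R_m(\bz)\right)=0 .
\]

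Next I would use the structural description of $R_m(\bz)$ from the theorem: it is a sum of power series in the $\xi_\al$, each summand being independent of at least one $\xi_{\al_0}$. By linearity it suffices to prove that any factor $F$ not involving a given $\xi_{\al_0}$ is killed by $\phi$ in the presence of the prefactor and middle product. At this point I would invoke Lemma \ref{equivev} to move the prefactor through, rewriting $\phi\big(\prod_\beta\Q_{\beta,1}\cdot g\big)$ as the constant term, in all the $\Q_{\al,1}$, of $\big(\prod_\beta\Q_{\beta,1}\big)\,ev_0(g)$. A nonzero value thus requires $ev_0$ of the remaining product to carry exponent exactly $-1$ in each variable $\Q_{\al,1}$, and in particular in $\Q_{\al_0,1}$.

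Then I would track the $\Q_{\al_0,1}$-exponents of the two factors separately. Expanded in the $u_{\al,i}$, the middle product is a polynomial in the $\Q_{\al,i}$ with $i\ge 1$, so by the polynomiality Lemma \ref{poly} its image under $ev_0$ is a genuine polynomial in the $\Q_{\beta,1}$, hence of $\Q_{\al_0,1}$-exponent $\ge 0$. The crucial input, which I would extract from the tail-variable expansion of \cite{qKR}, is that negative powers of $\Q_{\al_0,1}$ can be produced only by $\xi_{\al_0}$: each $\xi_\al$ with $\al\ne\al_0$ carries $\Q_{\al_0,1}$ to non-negative exponents only. Consequently a $\xi_{\al_0}$-independent factor $F$ has $\Q_{\al_0,1}$-exponent $\ge 0$, the full product has $\Q_{\al_0,1}$-exponent $\ge +1$ coming from the prefactor, and no constant term in $\Q_{\al_0,1}$ can survive; thus $\phi$ returns $0$. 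Summing over the summands of $R_m(\bz)$ yields the vanishing of the remainder and hence \eqref{insert}. This is also the structural reason that $\tau(\bz)$, in which every $\xi_\al$ occurs to power $\ell_\al+1\ge 1$, does contribute, while the boundary counterterms collected in $R_m(\bz)$ do not.

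I expect the main obstacle to be exactly this last support claim, namely that only $\xi_{\al_0}$ can lower the $\Q_{\al_0,1}$-exponent while the remaining tail variables and the middle polynomial keep it non-negative; this is where the precise series expansion of the $\xi_\al$ in the $\Q_{\beta,1}^{-1}$ established in \cite{qKR} must be used carefully, together with the fact that normal ordering and $ev_0$ do not disturb the $\Q_{\al,1}$-grading (normal ordering only redistributes powers of $v$, and $ev_0$ only substitutes central scalars for the $\Q_{\al,0}$).
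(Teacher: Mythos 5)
Your proposal is correct and follows essentially the same route as the paper's own proof: feed the decomposition $C_m\,\tau(\bz)=v^{\frac{mr}{2}}e_m(\bz)\,\tau(\bz)+R_m(\bz)$ into $\phi$, pull the scalar $v^{\frac{mr}{2}}e_m(\bz)$ out by linearity, and annihilate each summand of $R_m(\bz)$ by a degree count in $\Q_{\al_0,1}$ for the missing index $\al_0$ --- the prefactor contributes exponent $+1$, while the middle product and the remaining $\xi_\beta$, $\beta\neq\al_0$, contribute only non-negative exponents, so no constant term survives. The paper compresses this into one sentence; your extra scaffolding (routing through Lemma \ref{equivev} and $ev_0$, and isolating the support claim that only $\xi_{\al_0}$ can lower the $\Q_{\al_0,1}$-exponent as the crucial input from \cite{qKR}) simply makes explicit what the paper leaves implicit.
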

\begin{proof} Using \eqref{actioncm}, note that as
each summand of $R_m(\bz)$ has at least one missing $\xi_\al$, the corresponding constant term in $\Q_{\al,1}$ must vanish, 
as the rest of the power series only generates positive powers of $\Q_{\al,1}$, once left evaluated at $\Q_{\al,0}=1$.
\end{proof}

\subsection{Difference equations from conserved quantities}
\label{diff}

We now use a standard argument to reformulate the conserved quantities of the quantum $Q$-system into
difference equations for the quantities $G^{(k)}_\bn$ of \eqref{defGkn}.

\begin{thm}\label{difconeN}
For $k\geq 2$, the coefficients $G^{(k)}_\bn$, $\bn=\{n_i^{(\al)}\}_{i\in [1,k];\al\in [1,r]}$, $n_k^{(\al)},n_{k-1}^{(\al)}\geq 1$,
obey the following difference equation:
\begin{eqnarray}
&&v^{\frac{r}{2}} e_1(\bz) \, G^{(k)}_\bn
=\sum_{\al=1}^{r+1} v^{r+\sum_\beta (\Lambda_{\al,\beta}-\Lambda_{\al-1,\beta})n_k^{(\beta)}} \,
G_{\bn+\epsilon_{\al-1,k-1}-\epsilon_{\al,k-1}+\epsilon_{\al,k}-\epsilon_{\al-1,k}}^{(k)} \nonumber \\
&&\qquad\qquad -\sum_{\al=1}^r v^{-1+\sum_\beta (\Lambda_{\al,\beta}-\Lambda_{\al-1,\beta})n_k^{(\beta)}} \,
G_{\bn+\epsilon_{\al-1,k-1}-\epsilon_{\al,k-1}+\epsilon_{\al+1,k}-\epsilon_{\al,k}}^{(k)}
\label{diffeqk}
\end{eqnarray}
where we use the notation $\epsilon_{\beta,m}$ for the vector with entries $(\epsilon_{\beta,m})_i^{(\al)}=\delta_{\al,\beta}\delta_{i,m}$.
For $k=1$, the coefficients $G^{(1)}_\bn$, $\bn=\{n^{(\al)}\}_{\al\in [1,r]}$, $n^{(\al)}\geq 0$, obey the difference equation:
\begin{eqnarray}
&&v^{\frac{r}{2}} e_1(\bz) \, G^{(1)}_\bn=v^{r+\sum_\beta \Lambda_{1,\beta}(n^{(\beta)}+1)} \,
G_{\bn+\epsilon_{1}}^{(1)}\nonumber \\
&&\qquad\qquad +\sum_{\al=2}^{r+1} \left(v^{r+\sum_\beta (\Lambda_{\al,\beta}-\Lambda_{\al-1,\beta})(n^{(\beta)}+1)} 
-v^{-1+\sum_\beta (\Lambda_{\al+1,\beta}-\Lambda_{\al,\beta})(n^{(\beta)}+1)}\right) \,
G_{\bn+\epsilon_{\al}-\epsilon_{\al-1}}^{(1)}
\label{diffeqkone}
\end{eqnarray}
with the notation $\epsilon_\beta$ for the vector with entries $(\epsilon_\beta)^{(\al)}=\delta_{\al,\beta}$.
\end{thm}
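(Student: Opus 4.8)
The plan is to exploit the $n$-independence of the conserved quantity $C_1$ established in Theorem \ref{consQ}, together with its explicit form \eqref{finacone}, to convert the scalar action recorded in Corollary \ref{conscor} into a recursion on the coefficients $G^{(k)}_\bn$. First I would insert the finite-$n$ expression \eqref{finacone} for $C_1$, taken at $n=k$, directly into the left slot of the generating function \eqref{defGky}, rather than using its $n\to\infty$ limit. Since $C_1$ is independent of $n$, Corollary \ref{conscor} guarantees that whichever representative I use produces the same scalar $v^{r/2}e_1(\bz)$ acting on $G^{(k)}(\bu)$; this gives the left-hand side of \eqref{diffeqk}. The substantive work is then to rewrite the right-hand side, namely the insertion of
$$\sum_{\al=1}^{r+1} \Q_{\al,k}^{-1}\Q_{\al-1,k}\left(v^r \Q_{\al,k+1}\Q_{\al-1,k+1}^{-1}-v^{-1}\Q_{\al+1,k+1}\Q_{\al,k+1}^{-1}\right),$$
as a combination of shifted coefficients $G^{(k)}_{\bn'}$.

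The key mechanism is that each generator $\Q_{\al,k}^{\pm1}$ appearing in $C_1$ acts on the product $\prod_{i=1}^k\prod_\al \frac{1}{1-u_{\al,i}\Q_{\al,i}}$ by shifting the index $n_k^{(\al)}$. Concretely, I would commute the factors $\Q_{\al,k}^{-1}\Q_{\al-1,k}$ leftward past the geometric series at level $k$, using the commutation relations \eqref{commualbet}: multiplying on the right by $\Q_{\al,k}^{-1}$ or $\Q_{\al-1,k}$ and extracting the appropriate coefficient of $\bu$ effects the shifts $-\epsilon_{\al,k}$ and $+\epsilon_{\al-1,k}$ respectively. The factors $\Q_{\al,k+1}^{\pm1}$ at level $k+1$ lie outside the range $i\in[1,k]$ of the generating product; here I would use the quantum $Q$-system relation \eqref{qQsys} to re-express products like $\Q_{\al,k+1}\Q_{\al-1,k+1}^{-1}$ in terms of level-$k$ data, or equivalently absorb them using the structure already built into $\tau(\bz)$ and the $\xi_\al$ via \eqref{tailbits}. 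The shifts at level $k-1$, namely $\epsilon_{\al-1,k-1}-\epsilon_{\al,k-1}$, arise from this rewriting: eliminating the level-$(k+1)$ generators in favor of level-$k$ and level-$(k-1)$ generators is exactly what produces the combined index shifts appearing in the subscripts of \eqref{diffeqk}.

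The powers of $v$ are then bookkeeping: every commutation through the geometric series contributes a factor $v^{\Lambda_{\al,\beta}(k'-k)}$ per the relation \eqref{commualbet}, and since the level-$k$ factors being passed carry the exponents $n_k^{(\beta)}$, the accumulated prefactor is $v^{\sum_\beta(\Lambda_{\al,\beta}-\Lambda_{\al-1,\beta})n_k^{(\beta)}}$, matching both sums in \eqref{diffeqk}; the residual $v^r$ and $v^{-1}$ come directly from the two terms of \eqref{finacone}. For the case $k=1$ I would run the same argument but with $C_1$ evaluated at $n=1$, where the level-$0$ generators $\Q_{\al,0}$ are set to $1$ by the evaluation map $ev$; the boundary conventions $\Q_{0,k}=\Q_{r+1,k}=1$ then collapse the $\al=1$ term and merge the two sums into the shifted form \eqref{diffeqkone}, with the extra $+1$ in the exponents reflecting the shift $\ell_\al\mapsto\ell_\al+1$ implicit in $\tau(\bz)$ as written in \eqref{taudef}.

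I expect the main obstacle to be the treatment of the level-$(k+1)$ generators $\Q_{\al,k+1}^{\pm1}$, which sit outside the product range and must be rewritten consistently while tracking all commutation phases and normal-ordering subtleties demanded by the $\phi=CT\circ ev$ map of Definition \ref{phidef}; in particular one must verify that the rewriting does not generate spurious negative powers of $\Q_{\al,1}$ that would alter the constant-term extraction. Care is also needed at the boundary indices $\al=1$ and $\al=r+1$, where the conventions $\Q_{0,k}=\Q_{r+1,k}=1$ must be applied before rather than after commutation to avoid sign and exponent errors.
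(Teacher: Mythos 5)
Your overall strategy is the paper's own: insert $C_1$ into the bracket, evaluate it once via Corollary \ref{conscor} to get the scalar $v^{r/2}e_1(\bz)$, and once via the explicit formula \eqref{finacone} to get shifted coefficients. But your choice of representative, $C_1$ at $n=k$, creates a genuine gap at the central step. At $n=k$ the expression \eqref{finacone} contains the generators $\Q_{\al,k+1}^{\pm1}$, which lie outside the range of the product $\prod_{i=1}^k\prod_\al(\Q_{\al,i})^{n_i^{(\al)}}$, and your proposed remedy---eliminating them via the quantum $Q$-system \eqref{qQsys}---is not a clean monomial operation: for instance $\Q_{\al-1,k+1}^{-1}$ becomes $\Q_{\al-1,k-1}\,v^{\Lambda_{\al-1,\al-1}}\left(\Q_{\al-1,k}^2-\Q_{\al,k}\Q_{\al-2,k}\right)^{-1}$, the inverse of a binomial, i.e.\ a formal series rather than a Laurent monomial, and showing that all such terms recombine across the sum over $\al$ into the single shifted coefficients of \eqref{diffeqk} is tantamount to re-proving the $n$-independence of $C_1$ (Theorem \ref{consQ}), which you already invoke for the other side of the computation. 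Your phase bookkeeping also fails for this representative: generators at the same level commute with \emph{no} phase ($k'=k$ in \eqref{commualbet}), so the level-$k$ factors $\Q_{\al,k}^{-1}\Q_{\al-1,k}$ of $C_1(n=k)$ pass through the level-$k$ part of the product freely, and the factor $v^{\sum_\beta(\Lambda_{\al,\beta}-\Lambda_{\al-1,\beta})n_k^{(\beta)}}$ cannot arise where you say it does. The fix is simply to take the representative $n=k-1$, as the paper does: then $C_1$ involves only levels $k-1$ and $k$, both inside the product range, each monomial of $C_1$ produces exactly one shifted coefficient, and the stated phase arises from commuting $\Q_{\al,k-1}^{-1}\Q_{\al-1,k-1}$ leftward through $\prod_\beta(\Q_{\beta,k})^{n_k^{(\beta)}}$ using \eqref{commualbet} with $k'-k=1$.

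Your $k=1$ discussion has two further problems. It is internally inconsistent: you propose to evaluate $C_1$ at $n=1$, but then speak of the level-$0$ generators $\Q_{\al,0}$ being set to $1$ by $ev$---those appear only in the representative $n=0$, which is the one actually needed ($n=k-1$ again). And the $+1$ in the exponents of \eqref{diffeqkone} does not come from the exponents $\ell_\al+1$ in $\tau(\bz)$ of \eqref{taudef}; it comes from the fact that $\Q_{\al,0}^{-1}\Q_{\al-1,0}$ must be commuted to the left through the prefactor $\prod_{\beta=1}^r\Q_{\beta,1}$ \emph{in addition to} $\prod_\beta(\Q_{\beta,1})^{n^{(\beta)}}$ before the evaluation sets $\Q_{\al,0}=1$ (this is exactly the mechanism of Lemma \ref{equivev}), which replaces each $n^{(\beta)}$ by $n^{(\beta)}+1$ in the phase.
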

\begin{proof}
We compute in two ways the quantity
$$ B=\phi\left(\left(\prod_{\al=1}^r \Q_{\al,1} \right)\left(\prod_{i=1}^k\prod_{\al=1}^r (\Q_{\al,i})^{n_i^{(\al)}} \right) C_1\tau(\bz)\right) $$
First, we find $B= v^{\frac{r}{2}} e_1(\bz) \, G^{(k)}_\bn$ by direct application of \eqref{insert}. Second, we use the expression
\eqref{finacone} with $n=k-1$ for $C_1$. Using the notation:
$$ \langle M \rangle_\bn=\phi\left(\left(\prod_{\al=1}^r \Q_{\al,1} \right)
\left(\prod_{i=1}^k\prod_{\al=1}^r (\Q_{\al,i})^{n_i^{(\al)}} \right)\, M \,\tau(\bz)\right)$$
for any Laurent monomial $M$ of the $Q$'s,
we have:
\begin{eqnarray*}
\langle \Q_{\al,k-1}^{-1}\Q_{\al-1,k-1}\Q_{\al,k}\Q_{\al-1,k}^{-1} \rangle_\bn 
&=&v^{\sum_\beta (\Lambda_{\al,\beta}-\Lambda_{\al-1,\beta})n_k^{(\beta)} } \langle 1 \rangle_{\bn+\epsilon_{\al-1,k-1}-\epsilon_{\al,k-1}+\epsilon_{\al,k}-\epsilon_{\al-1,k}}\\
\langle \Q_{\al,k-1}^{-1}\Q_{\al-1,k-1}\Q_{\al+1,k}\Q_{\al,k}^{-1} \rangle_\bn 
&=& v^{\sum_\beta (\Lambda_{\al,\beta}-\Lambda_{\al-1,\beta})n_k^{(\beta)} } 
\langle 1 \rangle_{\bn+\epsilon_{\al-1,k-1}-\epsilon_{\al,k-1}+\epsilon_{\al+1,k}-\epsilon_{\al,k}}
\end{eqnarray*}
The case $k=1$ must be treated separately, as the insertion of $\Q_{\al,0}$ amounts to a factor
$v^{-\sum_\beta \Lambda_{\al,\beta}}$, coming from commutation of 
$\Q_{\al,k-1}^{-1}\Q_{\al-1,k-1}$ through $\prod_\beta (\Q_{\beta,k})^{n_k^{(\beta)}}$.
The Theorem follows.
\end{proof}

\begin{example}
When $r=1$ (case of ${\mathfrak sl}_2$), we have for $n_i\equiv n_i^{(1)}$, and $\Lambda_{1,1}=1$:
$$v^{n_k+1}G^{(k)}_{n_1,...,n_{k-1}-1,n_k+1}+v^{1-n_k}G^{(k)}_{n_1,...,n_{k-1}+1,n_k-1}-v^{n_k-1} G^{(k)}_{n_1,...,n_{k-1}-1,n_k-1}=v^{\frac{1}{2}}(z+z^{-1})\, G^{(k)}_{n_1,...,n_{k-1},n_k}$$
with $z=z_1=z_2^{-1}$,
whereas for $k=1$, $n\equiv n_1$:
\begin{equation}\label{levonG}
v^{n+2}G^{(1)}_{n+1}+(v^{-n}-v^{n})G^{(1)}_{n-1}=v^{\frac{1}{2}}(z+z^{-1})\,G^{(1)}_n 
\end{equation}
\end{example}

More generally, repeating this with the other conserved quantities $C_m$, $m\geq 2$ leads to higher difference equations
of the form ${\mathcal D}^{(m)} G^{(k)}_\bn =v^{\frac{m r}{2}} e_m G^{(k)}_\bn$, where the difference operators ${\mathcal D}^{(m)}$
form a commuting family for $m=1,2,...,r$, and ${\mathcal D}^{(1)}$ acts on the function $G_\bn^{(k)}$ of $\bn$
via the l.h.s. of eq.\eqref{diffeqk}.

\begin{example}\label{oneexG}
When $r=2$ and $k=2$ (case of ${\mathfrak sl}_3$, level $2$), we have the following recursion relations in the 
variables $n^{(\al)}_1=n_\al$
and $n^{(\al)}_2=p_\al$, $\al=1,2$, obtained respectively by inserting the conserved quantities $C_1$ and $C_2$
of Example \ref{consatwo}:
\begin{eqnarray*}
&&G^{(2)}_{n_1-1,p_1;n_2+1,p_2}+v^{-3n_2}G^{(2)}_{n_1+1,p_1-1;n_2-1,p_2+1}
+v^{-3n_2-3p_2}G^{(2)}_{n_1,p_1+1;n_2,p_2-1} \\
&&\qquad -v^{-3}G^{(2)}_{n_1-1,p_1;n_2-1,p_2+1}-v^{-3-3n_2}G^{(2)}_{n_1+1,p_1-1;n_2,p_2-1}
= v^{-1-2n_2-p_2} e_1(\bz) \,G^{(2)}_{n_1,p_1;n_2,p_2}\\
&&G^{(2)}_{n_1,p_1-1;n_2,p_2+1}+v^{-3p_2}G^{(2)}_{n_1-1,p_1+1;n_2+1,p_2-1}
+v^{-3n_2-3p_2}G^{(2)}_{n_1+1,p_1;n_2-1,p_2} \\
&&\qquad-v^{-3}G^{(2)}_{n_1,p_1-1;n_2+1,p_2-1}-v^{-3-3p_2}G^{(2)}_{n_1-1,p_1+1;n_2-1,p_2}
= v^{-1-n_2-2p_2} e_2(\bz) \,G^{(2)}_{n_1,p_1;n_2,p_2}
\end{eqnarray*}
with $e_1(\bz)=z_1+z_2+z_3$ and $e_2(\bz)=z_1z_2+z_2z_3+z_1z_3$, $z_1z_2z_3=1$.
\end{example}

For later use, let us focus on the level $1$ higher difference equations obtained by inserting $C_m$, $m\in [1,r]$
into the bracket $\langle \cdots \rangle_\bn$ defined above. As apparent from eq.\eqref{diffeqkone}
of Theorem \ref{difconeN},
the difference equation for $G^{(1)}$ allows to express 
$G^{(1)}_{\bn+\epsilon_1}$ as a linear combination
of the shifted functions $G^{(1)}_{\bn+\epsilon_{\al+1}-\epsilon_\al}$, $\al=1,2,...,r$, as well as $G^{(1)}_{\bn}$.
Similarly, due to the form of the conserved quantities as functions of the $\Q_{\al,n}$'s, the level 1 $C_m$ difference
equation allows to express $G^{(1)}_{\bn+\epsilon_m}$ as a linear combination of shifted functions 
of the form: $G^{(1)}_{\bn+\sum_{1\leq i\leq m}\epsilon_{\al_i+1}-\epsilon_{\al_i}}$ with 
$1\leq \al_{1}< \cdots < \al_m \leq r$, as well as $G^{(1)}_{\bn}$. Combining all the equations for $m=1,2,...,r$
provides therefore a recursive method for computing all $G_{\bn}^{(1)}$. Indeed, defining $\sigma(\bn)=\sum_{\al} n^{(\al)}$, we see that each equation is a three term recursion in the variable $\sigma(\bn)$, as the term $\bn+\epsilon_m$ has a value of $\sigma$ 1 or 2 larger than all other terms. If we know all the values of $G^{(1)}_\bn$ for 
$\sigma(\bn)\leq N$, we therefore deduce $G^{(1)}_\bn$ for all values $\sigma(\bn)=N+1$. We have the following:

\begin{thm}\label{oneunique}
The difference equations obtained by inserting $C_m$, $m=1,2,...,r$ at level 1 determine the functions $G^{(1)}_\bn$ 
uniquely.
\end{thm}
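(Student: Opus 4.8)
The plan is to establish uniqueness by setting up a well-founded induction on the total degree $\sigma(\bn)=\sum_\al n^{(\al)}$, combined with a secondary induction that resolves the ambiguity inherent in having $r$ equations (indexed by $m=1,\dots,r$) that must jointly pin down the single unknown $G^{(1)}_\bn$ at each level. The base case is immediate: the normalization in \eqref{taudef} was chosen so that $G_0^{(1)}=1$, which fixes $G^{(1)}_\bn$ when $\sigma(\bn)=0$. The discussion preceding the statement already identifies the key structural fact: each $C_m$-equation is a three-term recursion in $\sigma(\bn)$, because the distinguished term $G^{(1)}_{\bn+\epsilon_m}$ has $\sigma$-value one larger (or, counting the leading monomial, strictly largest) than every other term $G^{(1)}_{\bn+\sum_{i}(\epsilon_{\al_i+1}-\epsilon_{\al_i})}$ appearing in that equation, since the substitutions $\epsilon_{\al+1}-\epsilon_\al$ preserve $\sigma$. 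Thus, assuming inductively that all $G^{(1)}_\bm$ with $\sigma(\bm)\le N$ are determined, each equation expresses some $G^{(1)}_{\bn+\epsilon_m}$ at level $N+1$ in terms of already-known quantities at level $\le N$.

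First I would make precise the claim that the collection of equations, as $\bn$ ranges over all admissible tuples with $\sigma(\bn)=N$ and $m$ ranges over $[1,r]$, covers every tuple $\bn'$ with $\sigma(\bn')=N+1$. The point is that adding $\epsilon_m$ to a weight $\bn$ of level $N$ produces a generic weight of level $N+1$; every $\bn'$ of level $N+1$ with $n'^{(m)}\geq 1$ arises as $\bn+\epsilon_m$ for $\bn=\bn'-\epsilon_m$, which indeed has level $N$. Since at least one coordinate of any nonzero tuple is positive, every $\bn'$ of level $N+1$ is reached by at least one of the $r$ equations. Solving that equation for $G^{(1)}_{\bn'}$ then yields its value, provided the coefficient of the leading term is a unit in $\Zv$; I would verify from \eqref{diffeqkone} that the coefficient of $G^{(1)}_{\bn+\epsilon_1}$ is $v^{r+\sum_\beta\Lambda_{1,\beta}(n^{(\beta)}+1)}$, a monomial in $v$ and hence invertible, and more generally that each $C_m$-equation has invertible leading coefficient.

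The main obstacle I anticipate is consistency rather than existence: a given $\bn'$ of level $N+1$ may be reachable through several different equations (different values of $m$, or different source tuples $\bn$), and one must check that all of these assign the \emph{same} value to $G^{(1)}_{\bn'}$, so that the recursion is well-defined and not merely one possible extension among many. Here I would invoke the fact, established in Corollary \ref{conscor} and Lemma \ref{limlem}, that the conserved quantities $C_1,\dots,C_r$ mutually commute and all act on $\tau(\bz)$ as honest scalars $v^{mr/2}e_m(\bz)$ inside the functional $\phi$. Because the $G^{(1)}_\bn$ are, by construction \eqref{defGkn}, genuine evaluations of a single well-defined element of $\mathcal{A}$ under $\phi$, they automatically satisfy every one of these difference equations simultaneously and consistently; there is no freedom to make inconsistent choices. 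Consequently the overdetermined-looking system is in fact compatible, and the induction determines a unique function.

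To close the argument I would phrase it as follows: the recursion furnishes at most one function satisfying all the $C_m$-equations together with the normalization $G^{(1)}_0=1$, because the value at each level is forced by values at lower levels; and the actual characters $G^{(1)}_\bn$ do satisfy the full system by Theorem \ref{difconeN} and the commutation of the $C_m$. Existence and uniqueness therefore coincide, and the functions $G^{(1)}_\bn$ are the unique solution. The one calculation I would still carry out explicitly is the verification that, for each $m$, the leading coefficient extracted from the $C_m$-version of \eqref{diffeqkone} is a pure power of $v$ (hence invertible in $\Zv$), since without this the inductive step could fail to solve for the top term.
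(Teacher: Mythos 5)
Your induction scheme---recursion on $\sigma(\bn)$, coverage of every tuple of level $N+1$ by some leading term $G^{(1)}_{\bn+\epsilon_m}$, invertibility of its coefficient (a pure power of $v$)---reproduces the recursive structure that the paper already lays out in the paragraph preceding the theorem. But your inductive step has a genuine gap. The non-leading terms of the $C_m$-equation at $\bn$ have the form $G^{(1)}_{\bn+\sum_i(\epsilon_{\al_i+1}-\epsilon_{\al_i})}$, and whenever some $n^{(\al_i)}=0$ such a term is indexed by a tuple with an entry equal to $-1$. These tuples lie outside the non-negative cone: they are never the leading term of any equation of the system (the equations of Theorem \ref{difconeN} are only available for $n^{(\al)}\geq 0$, and leading terms have all entries non-negative), so their values are neither fixed by your base case nor covered by your induction hypothesis ``all $G^{(1)}_\bm$ with $\sigma(\bm)\le N$ are determined.'' Consequently you cannot yet solve the equation for $G^{(1)}_{\bn+\epsilon_m}$: unknown boundary values sit on the right-hand side, and nothing in your argument shows either what they are or that they are irrelevant.

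This is precisely the point the paper's proof addresses, and it is essentially its entire content: one shows that $G^{(1)}_\bn=0$ whenever some $n^{(\al)}=-1$. This follows from the definition \eqref{defGkn} and of $\phi$ (Definition \ref{phidef}): the insertion of $\Q_{\al,1}^{-1}$ cancels the prefactor $\Q_{\al,1}$, while $\tau(\bz)$ supplies only strictly negative powers of $\Q_{\al,1}$, so the constant term in $\Q_{\al,1}$ vanishes. These vanishing boundary values, together with $G^{(1)}_{0}=1$, constitute the initial data that make the recursion well-posed; this is exactly how Example \ref{twoexG} proceeds, where the data $G_{-1,p}=G_{n,-1}=0$ is invoked explicitly. (An alternative repair, used later in Section \ref{demazure}, is to show that the coefficient of every such boundary term vanishes identically---e.g.\ the factor $(v^{-3n}-1)$ multiplying $G_{n-1,p+1}$ in \eqref{levonethreeone} vanishes at $n=0$---but that, too, is a substantive fact requiring proof.) Finally, your lengthy consistency discussion, while correct, is not needed for uniqueness: additional equations can only further constrain a solution; compatibility is relevant to existence, which holds by construction via Theorem \ref{difconeN}.
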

\begin{proof}
We must examine the initial conditions for $G_{\bn}^{(1)}$. We note that for any $\bn$ with some $n^{(\al)}=-1$, the function $G_\bn^{(1)}$
must vanish. Indeed, by definition it is the constant term in $\Q_{\al,1}$ of an expression with no non-negative power of $\Q_{\al,1}$ (as the insertion of $\Q_{\al,1}^{-1}$ cancels the prefactor $\Q_{\al,1}$, and the contributions 
from $\tau(\bz)$ only provide strictly negative powers of $\Q_{\al,1}$). 
We conclude that all values of $G_\bn^{(1)}=0$ for $\sigma(\bn)=-1,0$
except $G_{0,0,...,0}^{(1)}=1$ by the normalization of $\phi$. With these initial data, the $r$ difference equations determine a unique solution $G_\bn^{(1)}$ for all $\bn=(n^{(\al)})_{\al\in [1,r]}$ and $n^{(\al)}\geq 0$ for all $\al$.
\end{proof}

\begin{example}\label{twoexG}
When $r=2$ and $k=1$ (case of ${\mathfrak sl}_3$, level $1$), we have 
$ \Lambda_{1,1}=\Lambda_{2,2}=2$ and $ \Lambda_{1,2}=\Lambda_{2,1}=1$.
Denoting by $n=n^{(1)}_1$ and $p=n^{(2)}_1$, we have
the following recursion relation for $G_{n,p}\equiv G^{(1)}_{n,p}$:
\begin{equation}\label{levonethreeone}
v^3 G_{n+1,p}+(v^{-3n}-1)G_{n-1,p+1}+v^{-3-3n}(v^{-3p}-1)G_{n,p-1}= v^{-2n-p-1} e_1(\bz)\, G_{n,p}
\end{equation}
This equation does not determine $G_{n,p}$ entirely. We also have to consider the ``conjugate equation", obtained by insertion of the second conserved quantity $C_2$:
\begin{equation}\label{levonethreetwo}
v^3 G_{n,p+1}+(v^{-3p}-1)G_{n+1,p-1}+v^{-3-3p}(v^{-3n}-1)G_{n-1,p} =v^{-n-2p-1} e_2(\bz)\, G_{n,p}
\end{equation}
These two equations are readily seen to be three-term linear recursion relations in the variable $j=\sigma(n,p)=n+p$, namely allow to express a single function with $\sigma=j+1$ in terms of functions with $\sigma=j,j-1$.
Together with the initial data $G_{-1,p}=G_{n,-1}=0$ for all $n,p\geq 0$ and $G_{0,0}=1$ which determine all functions with $\sigma=-1,0$, the two above equations therefore determine $G_{n,p}$ completely. 
For instance, using the equations for all values of $\sigma=n+p$ indicated, we get:
\begin{eqnarray*}\sigma=0: \qquad  &&G_{1,0}=v^{-4}e_1\qquad G_{0,1}=v^{-4}e_2 \\
\sigma=1:\qquad  &&G_{2,0}=v^{-7}(v^{-3}e_1^2+(1-v^{-3})e_2)\qquad G_{1,1}=v^{-6}(v^{-3}e_1e_2+1-v^{-3})\\
&&G_{0,2}=v^{-7}(v^{-3}e_2^2+(1-v^{-3})e_1)
\end{eqnarray*}
with the shorthand $e_1=z_1+z_2+z_3$ and $e_2=z_1z_2+z_2z_3+z_1z_3$.
Note that the two equations determining $G_{1,1}$ are compatible, as a consequence of the 
commutation of $C_1$ and $C_2$ which implies $e_2 G_{1,0}=e_1 G_{0,1}$.
\end{example}

Theorem \ref{difconeN} may be immediately translated in terms of graded characters $\chi_\bn(q^{-1},\bz)$ by use of the formula \eqref{chitoG}, which results straightforwardly into the following:

\begin{thm}\label{difchar}
The graded characters $\chi_\bn\equiv \chi_\bn(q^{-1},\bz)$, 
$\bn=(n_i^{(\al)})_{\al\in [1,r];i\in [1,k]}$, $n_k^{(\al)},n_{k-1}^{(\al)}\geq 1-\delta_{k,1}$, 
satisfy the following difference equation for $k\geq 1$:
\begin{eqnarray}
&&\!\!\!\!\!\!\!\!\!\!\!\!\!\!\sum_{\al=1}^{r+1} 
\chi_{\bn+\epsilon_{\al-1,k-1}-\epsilon_{\al,k-1}+\epsilon_{\al,k}-\epsilon_{\al-1,k}} \nonumber \\
&&-\sum_{\al=1}^r q^{k-1-\sum_{i=1}^k i n_i^{(\al)}}
\chi_{\bn+\epsilon_{\al-1,k-1}-\epsilon_{\al,k-1}+\epsilon_{\al+1,k}-\epsilon_{\al,k}}= e_1(\bz) \, \chi_\bn
\label{diffeqchik}
\end{eqnarray}
with the convention that $(\epsilon_{\al,i})_j^{(\beta)}=\delta_{\beta,\al}\delta_{j,i}$, for $\beta\in [1,r]$ and $j\in [1,k]$,
$\epsilon_{0,i}=\epsilon_{r+1,i}=0$ for all $i$, and $e_1(\bz)=z_1+z_2+...+z_{r+1}$.
\end{thm}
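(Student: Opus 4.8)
The plan is to obtain the character identity by a direct change of normalization in the difference equations of Theorem~\ref{difconeN}, substituting the relation \eqref{chitoG}. Writing $G^{(k)}_\bn = v^{-E(\bn)}\chi_\bn$ with $E(\bn)=\sum_{i,\al,\beta}n_i^{(\al)}\Lambda_{\al,\beta}+\tfrac12\sum_{i,j,\al,\beta}n_i^{(\al)}\Lambda_{\al,\beta}{\rm Min}(i,j)n_j^{(\beta)}$, the entire content of the proof is to track how the Gaussian prefactor $v^{-E}$ transforms under the two elementary shifts appearing in \eqref{diffeqk}, namely $s^{(1)}_\al=\epsilon_{\al-1,k-1}-\epsilon_{\al,k-1}+\epsilon_{\al,k}-\epsilon_{\al-1,k}$ and $s^{(2)}_\al=\epsilon_{\al-1,k-1}-\epsilon_{\al,k-1}+\epsilon_{\al+1,k}-\epsilon_{\al,k}$. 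Multiplying \eqref{diffeqk} through by $v^{E(\bn)-r/2}$ turns its left-hand side $v^{r/2}e_1(\bz)G^{(k)}_\bn$ into $e_1(\bz)\chi_\bn$, so it remains only to show that each shifted term acquires the coefficient prescribed by \eqref{diffeqchik}.

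Since $E$ is a sum of a linear and a quadratic form in $\bn$, for any shift $s$ one has $E(\bn+s)-E(\bn)=L(s)+B(\bn,s)+\tfrac12 Q(s)$, where $L$ is the linear part, $Q$ the quadratic part, and $B(\bn,s)=\sum_{i,j,\al,\beta}n_i^{(\al)}\Lambda_{\al,\beta}{\rm Min}(i,j)s_j^{(\beta)}$ the associated symmetric bilinear form. Only $B(\bn,s)$ carries $\bn$-dependence, and this is exactly the term that must absorb the prefactors already present in \eqref{diffeqk}. First I would compute, using ${\rm Min}(i,k-1)-{\rm Min}(i,k)=-\delta_{i,k}$ on the support $i\in[1,k]$, that $B(\bn,s^{(1)}_\al)=\sum_\beta(\Lambda_{\al,\beta}-\Lambda_{\al-1,\beta})n_k^{(\beta)}$; this cancels the explicit factor $v^{\sum_\beta(\Lambda_{\al,\beta}-\Lambda_{\al-1,\beta})n_k^{(\beta)}}$ in the first sum of \eqref{diffeqk}, leaving only the $\bn$-independent contribution $v^{r-r/2-L(s^{(1)}_\al)-\frac12 Q(s^{(1)}_\al)}$. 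Using $L(s^{(1)}_\al)=0$ (valid including the boundary terms $\al=1,r+1$ via $\Lambda_{0,\cdot}=\Lambda_{r+1,\cdot}=0$) and the identity $\Lambda_{\al,\al}-2\Lambda_{\al,\al-1}+\Lambda_{\al-1,\al-1}=r$ proved in the Lemma preceding \eqref{tailbits}, one finds $\tfrac12 Q(s^{(1)}_\al)=r/2$, so each term in the first sum has coefficient exactly $1$, matching the first sum of \eqref{diffeqchik}.

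The analogous but longer computation for the second shift is where the genuine bookkeeping lies. Here $B(\bn,s^{(2)}_\al)=-(r+1)\sum_{i=1}^k i\,n_i^{(\al)}+\sum_\beta(\Lambda_{\al,\beta}-\Lambda_{\al-1,\beta})n_k^{(\beta)}$; the second piece again cancels the explicit prefactor in \eqref{diffeqk}, while the first piece is produced by the inverse-Cartan identity $\Lambda_{\gamma,\al-1}+\Lambda_{\gamma,\al+1}-2\Lambda_{\gamma,\al}=-(r+1)\delta_{\gamma,\al}$ and survives as genuine $\bn$-dependence. Decomposing $s^{(2)}_\al$ into its level-$(k-1)$ and level-$k$ parts and using the second-difference identity for $\Lambda$ twice gives $Q(s^{(2)}_\al)=(2k-1)r+2(k-1)$ and $L(s^{(2)}_\al)=-(r+1)$. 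Assembling all $\bn$-independent constants yields the exponent $-(r+1)(k-1)$, and together with the surviving $(r+1)\sum_i i\,n_i^{(\al)}$ and the relation $q=v^{-r-1}$ this collapses to the factor $q^{\,k-1-\sum_i i\,n_i^{(\al)}}$; the overall minus sign is carried by the explicit minus in the second sum of \eqref{diffeqk}. This reproduces the second sum of \eqref{diffeqchik}, completing the case $k\geq2$.

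The remaining obstacle, and the one point requiring separate treatment, is the case $k=1$, which must be derived from \eqref{diffeqkone} rather than \eqref{diffeqk}. Here the insertion of $\Q_{\al,0}$ through $\prod_\beta(\Q_{\beta,1})^{n_1^{(\beta)}}$ contributes the extra commutation factor $v^{-\sum_\beta\Lambda_{\al,\beta}}$ noted in the proof of Theorem~\ref{difconeN}, and one must respect the boundary conventions $\epsilon_{0,i}=\epsilon_{r+1,i}=0$ in the extreme terms $\al=1$ and $\al=r+1$. I would check that this extra factor is precisely what shifts the constant part of the exponent so that the unified formula \eqref{diffeqchik}, under the convention $n_k^{(\al)},n_{k-1}^{(\al)}\geq 1-\delta_{k,1}$, holds verbatim at $k=1$, recovering in particular the $r=2$ instances \eqref{levonethreeone}--\eqref{levonethreetwo} as a consistency check. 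I expect no conceptual difficulty beyond this careful tracking of exponents, in line with the remark that the translation is straightforward.
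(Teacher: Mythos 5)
Your proposal is correct and follows exactly the paper's route: the paper proves Theorem~\ref{difchar} by the one-line observation that Theorem~\ref{difconeN} "may be immediately translated" via \eqref{chitoG}, and your argument is precisely that translation carried out explicitly (your values $B(\bn,s^{(1)}_\al)=\sum_\beta(\Lambda_{\al,\beta}-\Lambda_{\al-1,\beta})n_k^{(\beta)}$, $L(s^{(1)}_\al)=0$, $Q(s^{(1)}_\al)=r$, $L(s^{(2)}_\al)=-(r+1)$, $Q(s^{(2)}_\al)=(2k-1)r+2(k-1)$ all check out, yielding coefficients $1$ and $q^{k-1-\sum_i i n_i^{(\al)}}$ as required). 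The only part left schematic is the $k=1$ verification from \eqref{diffeqkone}, but the outlined bookkeeping is sound there as well (it reproduces \eqref{chisltwo} from \eqref{levonG}), so this supplies more detail than the paper itself provides.
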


The higher conserved quantities give rise to higher difference equations for $\chi_\bn$.

\begin{example}
In the case $r=1$ (${\mathfrak sl}_2$), we have:
$$\chi_{n_1,...,n_{k-1}-1,n_k+1}+\chi_{n_1,...,n_{k-1}+1,n_k-1}-q^{k-1-\sum_{i=1}^k in_i} 
\chi_{n_1,...,n_{k-1}-1,n_k-1}=(z+z^{-1})\, \chi_{n_1,...,n_{k-1},n_k}$$
For $k=1$, this reduces to:
\begin{equation}\label{chisltwo} \chi_{n+1}+(1-q^{-n})\chi_{n-1}=(z+z^{-1})\, \chi_{n}\end{equation}
\end{example}

\section{Difference equations for characters of level-1 (Weyl) modules}
\label{todawhit}
\subsection{Level-1 difference equations and the $q$-deformed open Toda chain}

When specialized to level $k=1$, the difference equation of Theorem \ref{difchar}
takes a particularly simple form. In this section, we show that this difference equation
is the eigenvalue equation for the q-deformed Toda operator for $U_q(\mathfrak{sl}_{r+1})$
of \cite{Etingof}, after applying a suitable automorphism, and performing a number of specializations.

We start with a few definitions.

\begin{defn}
We introduce the following difference operators acting on functions of the variable $\bx=(x_1,...,x_r)$:
\begin{eqnarray}
S_\al(f)(\bx)&=&f(x_1,...,x_{\al-1},x_\al-\frac{1}{2},x_{\al+1},...,x_r)\qquad (\al=1,2,...,r) \\
S_0(f)(\bx)&=&f(\bx), \qquad  S_{r+1}(f)(\bx)=f(\bx)\\
T_\al&=& S_{\al+1}S_{\al}^{-1} \qquad (\al=0,1,...,r)
\end{eqnarray}
\end{defn}

\begin{defn}
The $\tilde q$-deformed difference (open) Toda Hamiltonian \cite{Etingof} for $U_{\tilde q}(\mathfrak{sl}_{r+1})$
is the following operator acting on functions of $\bx$, for fixed parameters ${\tilde q}, \nu_\al \in \C^*$:
\begin{equation}\label{qtoda}
H_{\tilde q}=\sum_{\al=0}^r T_\al^2+({\tilde q}-{\tilde q}^{-1})^2 \sum_{\al=1}^{r} \nu_\al \, {\tilde q}^{-2x_\al} T_{\al-1} T_{\al} 
\end{equation}
\end{defn}

In  \cite{Etingof}, this Hamiltonian is related to the so-called relativistic Toda operator
by use of an automorphism.

\begin{defn} We introduce the following automorphism $\tau$ of the algebra ${\mathcal T}$ generated by $T_\al$,
$\al=0,1,...,r$ and $U_\al={\tilde q}^{-2x_\al}$, $\al=1,2,...,r$:
\begin{equation}\label{isom2}
\tau(T_\al)=T_\al \qquad \tau(U_\al)=U_\al\, T_\al \,T_{\al-1}^{-1}
\end{equation}
In particular, the automorphism $\tau$ respects the commutation relations 
$T_\al \, U_\beta=q^{\delta_{\beta,\al+1}-\delta_{\beta,\al}}U_\beta\, T_\al$.
\end{defn}

The image of $H_q$ under $\tau$ is the following Hamiltonian:
\begin{equation}\label{todaq} H_{\tilde q}'=\tau(H_{\tilde q})=T_0^2+\sum_{i=1}^r 
\left(1+({\tilde q}-{\tilde q}^{-1})^2 \nu_\al\, {\tilde q}^{-2x_\al} \right)T_{\al}^2
\end{equation}

On the other hand, it is easy to rewrite the level-1 difference equation \eqref{diffeqchik} of Theorem \ref{difchar} for 
$\chi_\bn\equiv \chi_\bn(q^{-1},\bz)$, $n=\{n^{(\al)}\}_{\al\in [1,r]}$ as:
\begin{equation}\label{gradedtoda} 
\left(\sum_{\al=0}^{r} T_{\al}^{-2} -\sum_{\al=1}^{r} q^{-n^{(\al)}} T_{\al}^{-2} \right)\, \chi_\bn=e_1(\bz) \chi_\bn 
\end{equation}
where the operator $T_\al^2$ acts on functions of $\bn$ by 
$(T_\al^2 f)(\bn)=f(\bn+\epsilon_{\al}-\epsilon_{\al+1})$.

We see that if we pick
\begin{equation}\label{specia} \bx=-\bn,\quad {\tilde q}^2=q^{-1},\quad ({\tilde q}-{\tilde q}^{-1})^2 \nu_\al=-1 
\end{equation}
then the difference equation turns into an eigenvector equation for the $\tilde q$-Toda Hamiltonian \eqref{todaq},
with eigenvalue $e_1(\bz)$. 
The level-1 graded character is therefore a $\tilde q$-Whittaker function.

\subsection{Whittaker functions and fusion products}

In \cite{DKT}, we have obtained the so-called fundamental q-Whittaker functions $W_\lambda(\bx)$ for 
$U_q(\mathfrak{sl}_{r+1})$ by explicitly constructing Whittaker vectors in a Verma module $V_\lambda$
with generic highest weight $\lambda$, using a path model. These form a basis of the eigenspace of the q-Toda Hamiltonian \eqref{qtoda} for eigenvalue $E_\lambda=\sum_{i=0}^r q^{2(\lambda+\rho\vert \omega_{i+1}-\omega_i)}$
where $\omega_i$ are the fundamental weights of $A_r$. 
The dimension of this eigenspace is the order of the Weyl group, here $(r+1)!$, as we may generate 
other independent solutions $W_{s(\lambda+\rho)-\rho}(\bx)$ by Weyl group reflections $s$, while preserving
$E_{s(\lambda+\rho)-\rho}=E_\lambda$.

Identifying $z_i={\tilde q}^{2(\lambda+\rho\vert \omega_{i}-\omega_{i-1})}$ for $i=1,2,...,r+1$, we deduce that
the graded level-1 character $\chi_\bn$ is a linear combination of the image of the fundamental 
${\tilde q}$-Whittaker functions 
under the automorphism $\tau$, with the additional specialization \eqref{specia}.

Let us illustrate this in the case of $U_q(\mathfrak{sl}_2)$.
The ${\tilde q}$-Toda eigenvector equation is:
$$ W_\lambda(x-1)+(1+({\tilde q}-{\tilde q}^{-1})^2 \nu {\tilde q}^{-2x})W_\lambda(x+1)=(p
+p^{-1})W(x),\qquad  p={\tilde q}^{-\frac{\lambda+1}{2}} $$
Applying the automorphism $\tau$ and using the specialization \eqref{specia}, we obtain
a transformed fundamental ${\tilde q}$-Whittaker function $W'_\lambda(n)$, with the following series expansion
(valid for $|q|>1$):
$$W'_\lambda(n)=\tau(W_\lambda)(-n)=p^{n-\frac{1}{2}} \sum_{a\in \Z_+} 
\frac{{q}^{-a (n+1)}}{\prod_{i=1}^a (1-{q}^{-i})(1-p^{2}{q}^{-i})} $$
Analogously, we have
the Weyl-reflected fundamental q-Whittaker function:
$$W'_{-\lambda-2}(n)=p^{\frac{1}{2}-n} \sum_{a\in \Z_+} 
\frac{{q}^{-a (n+1)}}{\prod_{i=1}^a (1-{q}^{-i})(1-p^{-2}{q}^{-i})} $$
The functions $W'_\lambda(n),W'_{-\lambda-2}(n)$ form a basis of the eigenspace of the transformed
${\tilde q}$-Toda Hamiltonian with same eigenvalue, namely
\begin{equation}\label{witoad}\tau(H_{\tilde q}) W'(n)=W'(n+1)+(1-q^{-n})W'(n-1)=(p+p^{-1})W'(n) \end{equation}
This coincides with the level-1 difference equation \eqref{chisltwo} with $p=z$.
Looking for a linear combination $\chi_n=c_\lambda(p,q) \, W'_\lambda(n)+c_{-\lambda-2}(p,q)\, W'_{-\lambda-2}(n)$ 
for say $n=0,1$, we find the coefficients:
$$c_\lambda(p,q)=\frac{ p^{\frac{1}{2}}}{(1-p^{-2})\prod_{i=1}^\infty (1-p^{-2}{q}^{-i})},
\quad c_{-\lambda-2}(p,q)=c_\lambda(p^{-1},q)=\frac{p^{-\frac{1}{2}}}{(1-p^{2})\prod_{i=1}^\infty (1-p^{2}{q}^{-i})}  $$
Remarkably, we have realized the graded character, which is polynomial in $q^{-1}$, $p,p^{-1}$
as a linear combination of two infinite series of $q^{-1}$ (the fundamental q-Whittaker functions). 
The cancellations occurring are the q-deformed version of the so-called class 1 regularity condition on
Whittaker functions. So we may view the graded character as a class one specialized q-Whittaker function.
We expect this to generalize to $U_q(\mathfrak{sl}_{r+1})$ (see also \cite{Lebedev,Lebedev3} for analogous 
considerations).

\section{The solution for $\sl_{r+1}$}
\label{demazure}
In this section, we introduce a generalization of the specialized Macdonald difference operators
(corresponding to their ``dual Whittaker limit" $t\to \infty$), and use them to construct a solution of the difference equations for the $\sl_{r+1}$ graded characters, by iterated action on the constant function $1$. 
We shall proceed in several steps. After introducing the new difference operators, we show that they satisfy the dual quantum $Q$-system. This allows to consider them as raising operators for graded characters, Theorems \ref{raisingthm} and \ref{almost}, which are proved in two separate steps, first only for level $k=1$ and then for general level $k\geq 2$.

\subsection{A realization of the dual quantum $Q$-system via generalized Macdonald operators}

\begin{defn}\label{hellrise}
Recall the notations \eqref{productnotation} $z_I,D_I,a_I(\bz)$.
We have the following sequence of  operators ${\mathcal D}_{\al,n}$, $\al=0,1,...,r+1$ and $n\in \Z$:
\begin{equation}\label{qan}
{\mathcal D}_{\al,n}=v^{-\frac{\Lambda_{\al,\al}}{2}n-\sum_{\beta=1}^r \Lambda_{\al,\beta}}
\sum_{I\subset [1,r+1]\atop |I|=\al } (z_I)^n a_I(\bz)\, D_I
\end{equation}
In particular we have 
$$ {\mathcal D}_{0,n}=1 \quad {\rm and}\quad {\mathcal D}_{r+1,n}
=(z_1z_2\cdots z_{r+1})^n D_1D_2\cdots D_{r+1}=(z_1z_2\cdots z_{r+1})^n=1$$
\end{defn}

Recall the standard definition of the difference Macdonald operators
for ${\mathfrak sl}_{r+1}$ \cite{macdo}:
\begin{equation}\label{macdop}
M_\al^{q,t}=\sum_{I\subset [1,r+1]\atop |I|=\al } \prod_{i\in I\atop j\not\in I} \frac{t z_i-z_j}{z_i-z_j} \Gamma_I
\end{equation}
where $\Gamma_I=\prod_{i\in I}\Gamma_i$, and $\Gamma_i(\bz)=(z_1,...,z_{i-1},q\, z_i,z_{i+1},...,z_{r+1})$.
This expression allows to identify  our operators ${\mathcal D}_{\al,n}$ for $n=0$ as:
\begin{eqnarray}
{\mathcal D}_{\al,0}&=&v^{-\sum_{\beta=1}^r \Lambda_{\al,\beta}} \, M_\al\, \Delta^\al\nonumber \\
M_\al &:=&\lim_{t\to \infty} t^{-\al(r+1-\al)}M_\al^{q,t}= \sum_{I\subset [1,r+1]\atop |I|=\al } a_I(\bz)\,  \Gamma_I
\label{degmac}
\end{eqnarray}
where $\Delta(\bz)=(v z_1,...,v z_{r+1})$.
The operators $M_\al^{q,t}$ as well as their limits $M_\al$, $\al=0,1,...,r+1$ are known to form a commuting family.

\begin{remark}
In the theory of Macdonald polynomials and difference operators, 
the limit $t\to \infty$ may be thought of as a ``dual Whittaker limit". Indeed, as pointed out below,
the duality of Macdonald polynomials $P_\lambda^{q^{-1},t^{-1}}=P_\lambda^{q,t}$,  allows to relate
our limit $t\to\infty$ to the so-called q-Whittaker limit $t\to 0$.
\end{remark}

Let $\mathcal A^{*}$ be the algebra generated by $\{Q_{\al,k}^*: \al\in[1,r],k\in\Z\}$ over $\Z_v$ modulo the ideal generated by the relations
\begin{eqnarray} \label{dualqQsys}
\Q_{\al,n}^*\, \Q_{\beta, p}^*&=&
v^{-\Lambda_{\al,\beta}(p-n)} \, \Q_{\beta,p}^*\, \Q_{\al,n}^* \qquad (|p-n|\leq |\beta-\al|+1)\\
v^{\Lambda_{\al,\al}}\, \Q_{\al,n-1}^* \, \Q_{\al,n+1}^* &=& 
(\Q_{\al,n}^*)^2-\Q_{\al+1,n}^*\,\Q_{\al-1,n}^*\nonumber
\end{eqnarray}
Equivalently, the second relation may be rewritten, using \eqref{dualqQsys} as:
\begin{equation}\label{leftq}
v^{-\Lambda_{\al,\al}}\, \Q_{\al,n+1}^*\, \Q_{\al,n-1}^*=(\Q_{\al,n}^*)^2-v^{-r-1}\Q_{\al+1,n}^*\,\Q_{\al-1,n}^*
\end{equation}
We refer to this as the dual quantum $Q$-system.  The algebra $\mathcal A^*$ is isomorphic to the algebra 
$\mathcal A^{\rm op}$, with the opposite multiplication to $\mathcal A$.

We have the following main result.

\begin{thm}\label{repsQ}
We have a polynomial representation $\pi$
of ${\mathcal A}^*$, with $\pi(\Q_{\al,n}^*)={\mathcal D}_{\al,n}$ of \eqref{qan}. 
That is, acting by left multiplication on the space $\C[\bz]$, the operators ${\mathcal D}_{\al,n}$  obey
the dual quantum $Q$-system relations for $A_r$:
\begin{eqnarray}
{\mathcal D}_{\al,n}\, {\mathcal D}_{\beta,p} &=&v^{-\Lambda_{\al,\beta}(p-n)} \, {\mathcal D}_{\beta,p} \, {\mathcal D}_{\al,n} 
\qquad (|p-n|\leq |\beta-\al|+1)\label{commutdem}\\
v^{-\Lambda_{\al,\al}}\, {\mathcal D}_{\al,n+1}\, {\mathcal D}_{\al,n-1}&=&({\mathcal D}_{\al,n})^2
-v^{-r-1}{\mathcal D}_{\al+1,n}\,{\mathcal D}_{\al-1,n}
\label{qsysdem}
\end{eqnarray}
\end{thm}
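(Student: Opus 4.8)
The plan is to verify the two families of relations \eqref{commutdem}--\eqref{qsysdem} by direct computation, after first normalizing all the shift operators to the pure $q$-shifts $\Gamma_I$ underlying the degenerate Macdonald operators $M_\al$ of \eqref{degmac}. The starting observation is the factorization $D_i=\Delta\,\Gamma_i$ (scale all variables by $v$, and $z_i$ additionally by $q$), so that $D_I=\Delta^{|I|}\Gamma_I$ and all the $\Gamma$'s and $\Delta$ commute. Since each $a_I(\bz)$ is homogeneous of degree $0$, the overall $v$-scalings produced by $\Delta^{|I|}$ cancel inside $a_I$, leaving the single computational identity that drives everything:
\[
D_I\big(z_J^p\,a_J(\bz)\big)=v^{\al\beta p}\,q^{\,p|I\cap J|}\,z_J^p\,\Gamma_I(a_J)(\bz),\qquad |I|=\al,\ |J|=\beta .
\]
Using this, every product ${\mathcal D}_{\al,n}{\mathcal D}_{\beta,p}$ is rewritten, after moving all multiplication operators to the left, as a sum over pairs $(I,J)$ of an explicit scalar times a spectral monomial $z_I^n z_J^p$, a rational function $a_I\,\Gamma_I(a_J)$, and the combined shift $\Delta^{\al+\beta}\Gamma_I\Gamma_J$.

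For the commutation relation \eqref{commutdem} the key point is that the combined shift $\Gamma_I\Gamma_J$ depends on the pair only through $S=I\cap J$ and $T=I\bigtriangleup J$. I would therefore group both sides by $(S,T)$: on the left the pairs with fixed $(S,T)$ are parametrized by $A\subset T$ with $|A|=\al-|S|$, giving $I=S\cup A$, and on the right the same shift arises from the complementary distributions of $T$. Re-indexing the right-hand sum by $A\mapsto T\setminus A$ makes the spectral monomials $z_S^{n+p}z_A^{n}z_{T\setminus A}^{p}$ coincide termwise, reducing the whole relation, for each $(S,T)$, to a single \emph{rational-function identity}
\[
v^{\al\beta p}q^{p|S|}\!\!\sum_{A\subset T}\! a_{S\cup A}\,\Gamma_{S\cup A}(a_{S\cup(T\setminus A)})\,z_A^n z_{T\setminus A}^p
= v^{-\Lambda_{\al,\beta}(p-n)}v^{\al\beta n}q^{n|S|}\!\!\sum_{A\subset T}\! a_{S\cup(T\setminus A)}\,\Gamma_{S\cup(T\setminus A)}(a_{S\cup A})\,z_A^n z_{T\setminus A}^p .
\]
I expect \textbf{this to be the main obstacle}: the per-term ratio $a_I\Gamma_I(a_J)/\big(a_J\Gamma_J(a_I)\big)$ is \emph{not} constant, so the identity only holds after the sum over $A$ collapses (by bringing the summands over a common denominator, as in the rank-one check where the two terms combine into $q(z_1^2-z_2^2)/\big((z_2-qz_1)(z_1-qz_2)\big)$). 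The collapse works precisely because $q=v^{-r-1}$ and because the spectral exponents stay in the window $|p-n|\le|\beta-\al|+1$; outside this window the surviving monomials no longer recombine and the relation fails. Establishing this identity for general $(\al,\beta,S,T)$, by a residue/partial-fraction argument on the $a$-functions, is the technical heart of the proof (the computation deferred to the appendix).

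The quadratic relation \eqref{qsysdem} is of Hirota/bilinear type and I would treat it likewise by grouping both sides according to the combined $\Gamma$-shift. The diagonal shifts $\Gamma_I^2$ ($q^2$ on all of $I$) occur only for equal index sets and match directly between $v^{-\Lambda_{\al,\al}}{\mathcal D}_{\al,n+1}{\mathcal D}_{\al,n-1}$ and ${\mathcal D}_{\al,n}^2$, the $n$-dependence of the spectral monomials and prefactors cancelling by the same homogeneity bookkeeping as above. For an off-diagonal shift, supported on $S'=I\cap I'$ of size $\al-1$ together with two extra boxes, contributions come both from the size-$\al$ pairs (in ${\mathcal D}_{\al,n}^2$ and on the left) and from the $(\al+1,\al-1)$ pairs in ${\mathcal D}_{\al+1,n}{\mathcal D}_{\al-1,n}$; grouping these and matching spectral monomials reduces the relation to a $2\times2$ Plücker (Desnanot--Jacobi) identity among the $a$-coefficients, in which the algebraic prefactor $v^{-r-1}=q$ in front of ${\mathcal D}_{\al+1,n}{\mathcal D}_{\al-1,n}$ is exactly the factor demanded by the determinant identity. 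Being bilinear and local in the index sets, this relation is less delicate than the commutation relation and should follow from the same normalization identity together with a short determinant computation.

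Finally, as a consistency check and an alternative route, one notes that at $n=p=0$ the operators reduce to scalar multiples of the commuting degenerate Macdonald operators $M_\al$ of \eqref{degmac}, whose commutativity is classical; the relations \eqref{commutdem}--\eqref{qsysdem} may then be read as its spectral-parameter refinement, with the window $|p-n|\le|\beta-\al|+1$ marking the range in which the refinement stays polynomial. This gives independent confidence in the rational identities above before grinding through the appendix computation.
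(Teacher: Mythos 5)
Your reduction is essentially the paper's own: your grouping by $S=I\cap J$ and $T=I\bigtriangleup J$ is the same as the paper's grouping by $K=I\cap J$ and $L=I\cup J$ (with $T=L\setminus K$), and in both cases the coefficient depending only on $(K,L)$ factors out, leaving one rational-function identity per group, summed over the distributions of $T$ into $I_0$ and $J_0$. However, you have not proved those identities, and you yourself flag them as ``the technical heart of the proof \ldots deferred to the appendix.'' This is a genuine gap: the entire content of the theorem lives there. The paper isolates them as two self-contained statements (Lemma \ref{firstlemma} for the commutation relation, valid exactly in the window $|p-n|\le b-a+1$, and Lemma \ref{secondlemma} for the quadratic relation) and proves them in Appendix A by a specific mechanism: multiply by the Vandermonde $\Delta(\bz)$, show by induction on $a$ that all residues at the poles $z_i= qz_j$ vanish (the residue of the size-$(a,b)$ identity being proportional to the size-$(a-1,b-1)$ identity), conclude the antisymmetrized expression is a polynomial, and then kill it by a degree count (Corollary \ref{ascor}: an antisymmetric polynomial of total degree less than $N(N-1)/2$ vanishes). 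Your phrase ``residue/partial-fraction argument'' points in this direction, but without the induction structure and the degree bound there is no proof; in particular the degree count is exactly what makes the window $|p-n|\le|\beta-\al|+1$ appear, which in your write-up is only asserted.

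A second, more pointed problem: your treatment of the quadratic relation \eqref{qsysdem} is wrong in its assessment. After grouping, the identity needed (the paper's Lemma \ref{secondlemma}) is
\begin{equation*}
\sum_{|I_0|=|J_0|=a} a_{I_0,J_0}\,b_{J_0,I_0}\Bigl( 1-q^a \tfrac{z_{I_0}}{z_{J_0}}\Bigr)
=\sum_{|I_0|=a+1,\ |J_0|=a-1} a_{I_0,J_0}\,b_{J_0,I_0},
\end{equation*}
a global summation identity over \emph{all} balanced splittings of a $2a$-element set against all $(a+1,a-1)$ splittings. This does not reduce to a $2\times2$ Pl\"ucker/Desnanot--Jacobi identity ``local in the index sets'': no termwise or pairwise matching of summands exists (just as no per-term matching exists in the commutation relation, as you correctly noted there), and the paper proves it by the same antisymmetrization--residue--degree-count induction as the first identity, not by a determinant computation. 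So the part of your plan you describe as ``less delicate'' is in fact exactly as delicate as the part you defer, and your proposed shortcut for it would fail.
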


Note that when $n=p=0$ the relation \eqref{commutdem} boils down to the commutation of the specialized
Macdonald operators at $t\to\infty$, as $M_\al\,\Delta=\Delta\,M_\al$.

The remainder of this section is devoted to the proof of this theorem.

Let us define for any disjoint sets $I$, $J$ of indices the quantities:
\begin{equation}
a_{I,J}(\bz)=\prod_{i\in I\atop j\in J} \frac{z_i}{z_i-z_j},\ \ 
b_{I,J}(\bz)=\prod_{i\in I\atop j\in J} \frac{z_i}{z_i-q z_j}, \ \ 
c_{I,J}(\bz)=\prod_{i\in I\atop j\in J} \frac{q z_i}{q z_i-z_j}
\end{equation}
Note that in this notation $a_I(\bz)$ of eq.\eqref{productnotation}  is simply $a_{I,\bar I}(\bz)$.

We have the following two lemmas. 

\begin{lemma}\label{firstlemma}
Fix integers $0\leq a\leq b$ and $\bz=(z_1,z_2,...,z_{a+b})$. Then we have:
\begin{equation}\label{lemone}
\sum_{I\cup J=[1,a+b],\ I\cap J=\emptyset\atop
|I|=a,\ |J|=b} (z_J)^p \left( a_{I,J}(\bz) b_{J,I}(\bz) -q^{p a} a_{J,I}(\bz) b_{I,J}(\bz)\right) =0\quad (|p|\leq b-a+1)
\end{equation}
\end{lemma}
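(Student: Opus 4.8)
The plan is to treat the left-hand side as a rational function of $\bz$ and show it vanishes identically by combining a symmetry argument with a pole/degree analysis. First I would rewrite the two products appearing in each summand more explicitly. For a fixed partition $[1,a+b]=I\sqcup J$ with $|I|=a$, $|J|=b$, the factor $a_{I,J}(\bz)\,b_{J,I}(\bz)=\prod_{i\in I,\,j\in J}\frac{z_i}{z_i-z_j}\cdot\prod_{j\in J,\,i\in I}\frac{z_j}{z_j-qz_i}$, while the ``swapped'' term $a_{J,I}(\bz)\,b_{I,J}(\bz)=\prod_{j\in J,\,i\in I}\frac{z_j}{z_j-z_i}\cdot\prod_{i\in I,\,j\in J}\frac{z_i}{z_i-qz_j}$. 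Note that interchanging the roles of $I$ and $J$ sends the first product to the second (up to the bookkeeping of which index set carries $q$), so the sum has a built-in symmetry under $I\leftrightarrow J$ that pairs each partition with its complement. This is precisely why a minus sign and the factor $q^{pa}$ can conspire to produce cancellation, and it is the structural reason the identity should hold.

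The cleanest route I would pursue is to clear denominators and view the whole expression as $P(\bz)/\prod_{i<j}\big((z_i-z_j)(z_i-qz_j)(z_j-qz_i)\big)$ for a Laurent polynomial $P$. The strategy is then to show $P\equiv 0$ by checking that the apparent poles are spurious and that the total homogeneous degree forces $P$ to vanish in the stated range $|p|\le b-a+1$. Concretely, I would fix a pair of indices, say $z_s$ and $z_t$, and examine the residue of the left-hand side at $z_s=z_t$. The partitions contributing a pole at $z_s=z_t$ are exactly those placing $s,t$ on opposite sides of the $I/J$ split; I would group these in pairs (the partition with $s\in I,\,t\in J$ against the one with $s\in J,\,t\in I$, all other indices fixed) and show the two residues cancel, using the explicit $a_{I,J},b_{I,J}$ formulas together with the factor $q^{pa}$ to match them. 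A parallel computation handles the poles at $z_s=qz_t$. Once all potential poles are shown to be removable, the left-hand side is a genuine Laurent polynomial, and a homogeneity/degree count in each $z_i$—controlled by the range $|p|\le b-a+1$—pins it down to be degree-deficient in a way that forces it to be identically zero.

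An alternative, possibly slicker, approach is an induction on $a$ (or on $a+b$): peeling off one element of the ground set and using the $n=p=0$ commutation of the specialized Macdonald operators, or reducing to a known Macdonald-operator identity in the dual Whittaker limit. Since the operators $M_\al$ are already stated to commute, one expects \eqref{lemone} to be a combinatorial shadow of $M_a M_b = M_b M_a$ after extracting the coefficient of a fixed monomial shift $(z_J)^p$; reorganizing the product of two Macdonald operators and matching coefficients could yield \eqref{lemone} directly.

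I expect the main obstacle to be the residue-matching step: verifying that the paired summands cancel exactly at each hyperplane $z_s=z_t$ and $z_s=qz_t$ requires carefully tracking how the factors $a_{I,J},b_{J,I}$ redistribute when $s,t$ swap sides, and confirming that the prefactor $q^{pa}$ supplies precisely the $q$-power needed for cancellation. The constraint $|p|\le b-a+1$ is what guarantees no ``leftover'' polynomial survives the degree count, so the delicate point will be showing that for $p$ in this range the residues match with the correct sign while for $|p|>b-a+1$ they need not—i.e., that the bound is sharp and genuinely used. Keeping the symmetric/antisymmetric bookkeeping of the $I\leftrightarrow J$ swap consistent throughout is where the calculation is most error-prone.
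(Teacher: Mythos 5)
Your overall strategy (clear denominators, show the apparent poles are spurious, then kill what remains by a degree count) is indeed the paper's strategy, but two of your key steps fail as described. First, the pairwise residue cancellation you propose only works for the poles at $z_s=z_t$: there, swapping $s$ and $t$ pairs the contributing partitions and the residues cancel by the usual symmetry argument. It does \emph{not} work at the poles $z_s=qz_t$. The terms with a pole there come from $b_{J,I}$ (with $s\in J$, $t\in I$) in the first group and from $b_{I,J}$ (with $s\in I$, $t\in J$) in the second group, and the paired residues involve products over leftover index sets of different sizes $a-1$ and $b-1$; they do not match termwise, and no choice of the prefactor $q^{pa}$ fixes this. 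What is true --- and this is the heart of the paper's proof --- is that the residue of the \emph{whole} (Vandermonde-multiplied, antisymmetrized) sum at $z_2=qz_1$ equals, up to an explicit monomial factor, the same expression in the remaining $a+b-2$ variables with parameters $(a-1,b-1)$ and exponents shifted by one. Hence the residues vanish only by induction on $a$ (base case $a=0$ trivial), the range $|p|\le b-a+1$ being stable under $(a,b)\mapsto(a-1,b-1)$. You mention induction merely as an optional alternative; in fact it is indispensable. Moreover, your suggested shortcut via $M_a M_b=M_b M_a$ is circular in this paper: Lemma \ref{firstlemma} for general $p$ is precisely what is used to prove the commutation relations \eqref{commutdem} of the operators ${\mathcal D}_{\al,n}$, and the classical commutation of Macdonald operators only covers the case $n=p=0$.

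Second, your degree count is not decisive in the form stated. Once the poles are removable, the left-hand side is a \emph{symmetric} Laurent polynomial, homogeneous of degree $pb$; homogeneity alone cannot force it to vanish (consider $e_1(\bz)^{pb}$), and bounds on the degree in each single variable cover only part of the range $|p|\le b-a+1$. The paper's way out is to multiply by the Vandermonde determinant $\Delta(\bz)$, so that the object becomes an \emph{antisymmetric} polynomial; after factoring out a suitable power of $z_1\cdots z_{a+b}$ (this is where the hypothesis $|p|\le b-a+1$ enters, guaranteeing one still has a polynomial), its total degree is strictly less than $N(N-1)/2$ with $N=a+b$, and any antisymmetric polynomial of total degree below that of the Vandermonde must vanish (Corollary \ref{ascor}). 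So the antisymmetrization is not cosmetic bookkeeping: it is exactly what turns the degree estimate into a proof, and without it your final step does not close.
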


\begin{lemma}\label{secondlemma}
Fix an integer $a\geq 1$ and $\bz=(z_1,z_2,...,z_{2a})$. Then we have:
\begin{equation}\label{lemtwo}
\sum_{I\cup J=[1,2a],\ I\cap J=\emptyset\atop
|I|=|J|=a} a_{I,J}(\bz) b_{J,I}(\bz)\left( 1-q^a \frac{z_{I}}{z_J}\right) =
\sum_{I\cup J=[1,2a],\ I\cap J=\emptyset\atop
|I|=a+1,\ |J|=a-1}a_{I,J}(\bz) b_{J,I}(\bz) 
\end{equation}
\end{lemma}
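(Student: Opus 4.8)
The plan is to prove Lemma~\ref{secondlemma}, an identity about sums over complementary subsets of $[1,2a]$ of the rational functions $a_{I,J}(\bz)b_{J,I}(\bz)$. I will treat both sides as rational functions in $\bz=(z_1,\dots,z_{2a})$ and $q$, and I expect the cleanest route is to recognize each side as a symmetric-function residue or contour integral, so that the identity reduces to a single contour-deformation argument rather than term-by-term manipulation.

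\medskip

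First I would set up an integral representation. Writing
\[
a_{I,J}(\bz)\,b_{J,I}(\bz)=\prod_{i\in I\atop j\in J}\frac{z_i}{z_i-z_j}\,\prod_{j\in J\atop i\in I}\frac{z_j}{z_j-q z_i},
\]
I observe that the factor $\prod_{i\in I,j\in J} z_i z_j/\bigl((z_i-z_j)(z_j-q z_i)\bigr)$ is exactly the kind of kernel that arises from evaluating a Cauchy-type integral. Concretely, for a fixed splitting size one can encode the sum over all $I$ with $|I|=a$ as a multiple contour integral
\[
\oint\cdots\oint \prod_{\ell=1}^{a}\Bigl(\prod_{i=1}^{2a}\frac{w_\ell}{w_\ell-z_i}\Bigr)\,(\text{interaction kernel in the }w_\ell)\,\frac{dw_\ell}{2\pi i\, w_\ell},
\]
where the poles of the $w$-integrals at $w_\ell=z_i$ pick out precisely the subset $I$, and the residual product over $\bar I=J$ reproduces the $b_{J,I}$ factors once the $q$-shift is built into the kernel. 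The factor $\bigl(1-q^a z_I/z_J\bigr)$ on the left and the shift from $|I|=a$ to $|I|=a+1$ on the right both correspond to moving one extra $w$-contour; this is the mechanism that makes the two sides equal.

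\medskip

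The key steps, in order, are: (1) fix the integral kernel so that taking all residues at $w_\ell=z_i$, over $\binom{2a}{a}$ choices of which $a$ variables to pick up, reproduces $\sum_I a_{I,J}(\bz)b_{J,I}(\bz)$ with $|I|=|J|=a$; (2) show that the insertion of the factor $\bigl(1-q^a z_I/z_J\bigr)$ corresponds, inside the integral, to inserting a symmetric function of the $w_\ell$ (for instance $1-q^a\prod w_\ell/\prod(\text{remaining }z)$ suitably interpreted) that vanishes on the ``balanced'' pole configuration and shifts the count; (3) deform contours so that one residue is taken at infinity or at a $q$-shifted pole $w_\ell=qz_i$, converting a size-$a$ residue computation into a size-$(a{+}1)$ one, which yields the right-hand side. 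Throughout I would use the symmetry of the full expression in $z_1,\dots,z_{2a}$ to reduce the number of cases, and I expect the identity of Lemma~\ref{firstlemma} (with the specialization $|p|\le b-a+1$) to serve as the $a=b$ consistency check, since Lemma~\ref{secondlemma} is morally the ``one step unbalanced'' companion of it.

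\medskip

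The main obstacle will be step~(2): correctly matching the combinatorial factor $\bigl(1-q^a z_I/z_J\bigr)$ to a contour/residue insertion so that the extra pole contribution lands exactly on the $(a{+}1,a{-}1)$ splittings with the right $b_{J,I}$ weights and with no stray boundary term. An alternative, more elementary fallback is pure induction on $a$: multiply through by $\prod_{i<j}(z_i-z_j)$ to clear denominators, reducing the identity to a polynomial identity of bounded degree, and then verify it by checking vanishing at enough specializations (e.g.\ setting $z_{2a}=q z_k$ or $z_{2a}=z_k$ to expose recursive structure in $a$). If the residue bookkeeping in step~(2) proves delicate, I would retreat to this polynomial-identity approach, using Lemma~\ref{firstlemma} as the base of the induction and degree counting to pin down the result uniquely.
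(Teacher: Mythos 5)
Your proposal has a genuine gap in both of its routes. The primary contour-integral approach is never carried out: you yourself flag step (2) — matching the factor $\bigl(1-q^a z_I/z_J\bigr)$ to a residue insertion that lands exactly on the $(a{+}1,a{-}1)$ splittings with no stray boundary terms — as the main obstacle, and no kernel is ever written down, so this is a plan rather than a proof. The fallback is closer to what actually works (and to the paper's argument), but it rests on a false claim: multiplying through by the Vandermonde $\prod_{i<j}(z_i-z_j)$ does \emph{not} reduce the identity to a polynomial identity, because the denominators $z_j-qz_i$ coming from the factors $b_{J,I}(\bz)$ survive. Consequently "checking vanishing at enough specializations" of a polynomial of bounded degree is not yet a legitimate argument — you do not have a polynomial, and you never establish the degree bound that would make finitely many checks sufficient.

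What is actually needed, and what the paper does, is precisely to close this gap. After symmetrizing (so each side is $\binom{2a}{a}$ resp.\ $\binom{2a}{a+1}$ times a symmetrization of a single representative term) and multiplying by $\Delta(\bz)$, one computes the residue of each antisymmetrized expression at the pole $z_2\to qz_1$ and shows that the residue of the \emph{difference} of the two sides for parameter $a$ is proportional to the same difference for parameter $a-1$ in the variables $(z_3,\dots,z_{2a})$. Induction on $a$ (base case $a=1$ by direct computation, not via Lemma \ref{firstlemma}) then shows all residues at $z_i=qz_j$ vanish, so the Vandermonde-multiplied difference \emph{is} a polynomial. Only at that point does degree counting enter: after factoring out $(z_1\cdots z_{2a})^{a-1}$, the remaining antisymmetric polynomial has total degree strictly less than $N(N-1)/2$ with $N=2a$, so it vanishes by the standard fact that a nonzero antisymmetric polynomial is divisible by the Vandermonde. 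Your proposal gestures at the specializations $z_{2a}=qz_k$ but never formulates the residue recursion, never proves polynomiality, and never performs the degree computation, each of which is an essential and nontrivial step.
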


The above Lemmas \ref{firstlemma} and \ref{secondlemma} are proved in Appendix A below.
Let us now turn to the proof of Theorem \ref{repsQ}.  
Let us first compute the quantity ${\mathcal D}_{\al,n}\, {\mathcal D}_{\beta,p}$.
Substituting the definition \eqref{qan}, we get:
\begin{eqnarray*}
&&{\mathcal D}_{\al,n}\, {\mathcal D}_{\beta,p}=\sum_{I,J\subset [1,r+1]\atop |I|=\al,\ |J|=\beta}
(z_I)^n\,  a_I(\bz)\,  D_I \, (z_J)^p \, a_J(\bz) \, D_J\\
&=& \sum_{K\subset L\subset [1,r+1]\atop |L|\leq \al+\beta,\ |K|\leq \al,\beta} 
\sum_{I_0\cup J_0=L\setminus K, I_0\cap J_0=\emptyset\atop
(I=K\cup I_0,\ J=K\cup J_0)} (z_{I_0}z_K)^n(z_{J_0}z_K)^p \,  a_{K\cup I_0,\bar L \cup J_0}\, v^{\al\beta p}
D_K D_{I_0} a_{K\cup  J_0,\bar L \cup I_0} D_{J}\\
&=& v^{\al\beta p}\!\!\!\!\! \!\!\!\!\! \sum_{K\subset L\subset [1,r+1]\atop |L|\leq \al+\beta,\ |K|\leq \al,\beta} \!\!\!\!\! (z_{L}z_K)^n
\!\!\!\!\! \!\!\!\!\! \sum_{I_0\cup J_0=L\setminus K,  I_0\cap J_0=\emptyset\atop |I_0|=\al-|K|,\ |J_0|=\beta-|K|}
\!\!\!\!\! (z_{J_0})^{p-n} 
a_{K,\bar L} a_{I_0,\bar L} a_{K,J_0} a_{I_0,J_0}
c_{K,\bar L} a_{K,I_0} a_{J_0,\bar L} b_{J_0,I_0} D_ID_J\\
&=&v^{\al\beta p}\!\!\!\!\! \!\!\!\!\! \sum_{K\subset L\subset [1,r+1]\atop |L|\leq \al+\beta,\ |K|\leq \al,\beta}
\!\!\!\!\!  (z_{L}z_K)^n
a_{K,\bar L}c_{K,\bar L}a_{K,L\setminus K}a_{L\setminus K,\bar L}
\left(\sum_{I_0\cup J_0=L\setminus K,  I_0\cap J_0=\emptyset\atop |I_0|=\al-|K|,\ |J_0|=\beta-|K|} 
(z_{J_0})^{p-n} a_{I_0,J_0}b_{J_0,I_0}\right)D_KD_L
\end{eqnarray*}
where we have replaced the sum over $I,J$ by one over $K=I\cap J$ and $L=I\cup J$ first, and then written
the disjoint unions
$I=K\cup I_0$, $J=K\cup J_0$, $\bar I=\bar L \cup J_0$, and $\bar J=\bar L\cup I_0$. 
Note that we have isolated a factor
$u_{K,L}(n):= (z_{L}z_K)^n
a_{K,\bar L}c_{K,\bar L}a_{K,L\setminus K}a_{L\setminus K,\bar L}$ which does not depend on $I_0,J_0$.
We may now write:
\begin{eqnarray*}
&&
v^{\frac{n\Lambda_{\al,\al}+p\Lambda_{\beta,\beta}}{2}+\sum_{\gamma}\Lambda_{\al,\gamma}+\Lambda_{\beta,\gamma}}
\left\{{\mathcal D}_{\al,n}\, {\mathcal D}_{\beta,p}-v^{-\Lambda_{\al,\beta}(p-n)}{\mathcal D}_{\beta,p}\,{\mathcal D}_{\al,n}\right\}\\
&=&\!\!\!\!\! \!\!\!\!\! \sum_{K\subset L\subset [1,r+1]\atop |L|\leq \al+\beta,\ |K|\leq \al,\beta}\!\!\!\!\!  u_{K,L}(n)
\sum_{I_0\cup J_0=L\setminus K,  I_0\cap J_0=\emptyset\atop |I_0|=\al-|K|,\ |J_0|=\beta-|K|} 
(z_{J_0})^{p-n} \Big(v^{\al\beta p} a_{I_0,J_0}b_{J_0,I_0}-v^{\al\beta n-\Lambda_{\al,\beta}(p-n)}
a_{J_0,I_0}b_{I_0,J_0}\Big)D_KD_L\\
&=&v^{\al\beta p} \!\!\!\!\! \!\!\!\!\! \sum_{K\subset L\subset [1,r+1]\atop |L|\leq \al+\beta,\ |K|\leq \al,\beta}\!\!\!\!\!  u_{K,L}(n)
\sum_{I_0\cup J_0=L\setminus K,  I_0\cap J_0=\emptyset\atop |I_0|=\al-|K|,\ |J_0|=\beta-|K|} 
(z_{J_0})^{p-n} \Big(a_{I_0,J_0}b_{J_0,I_0}-q^{\al (p-n)}
a_{J_0,I_0}b_{I_0,J_0}\Big)D_KD_L=0
\end{eqnarray*}
where we have first used $\Lambda_{\al,\beta}+\al\beta=\al(r+1)$ for $\al\leq \beta$,  $q=v^{-(r+1)}$, and then
applied Lemma \ref{firstlemma} for every fixed pair $K,L$ to the second summation, with $a=\al-|K|\leq b=\beta-|K|$
and $|p-n|\leq b-a+1=\beta-\al+1$.
The relation \eqref{commutdem} follows.

Analogously, we compute:
\begin{eqnarray}
&&v^{n\Lambda_{\al,\al}+2\sum_\beta \Lambda_{\al,\beta}}\left\{
({\mathcal D}_{\al,n})^2-v^{-\Lambda_{\al,\al}}\,{\mathcal D}_{\al,n+1}\,{\mathcal D}_{\al,n-1}\right\}\nonumber \\
&&\qquad =\sum_{K\subset L\subset [1,r+1]\atop |L|\leq 2\al,\ |K|\leq \al }\!\!\!\!\!  u_{K,L}(n)
\sum_{I_0\cup J_0=L\setminus K,  I_0\cap J_0=\emptyset\atop |I_0|=|J_0|=\al-|K|} 
a_{I_0,J_0}b_{J_0,I_0} \left( v^{n\al^2}-v^{(n-1)\al^2-\Lambda_{\al,\al}} \frac{z_{I_0}}{z_{J_0}}\right)D_KD_L\nonumber \\
&&\qquad =v^{n\al^2}\sum_{K\subset L\subset [1,r+1]\atop |L|\leq 2\al,\ |K|\leq \al }\!\!\!\!\!  u_{K,L}(n)
\sum_{I_0\cup J_0=L\setminus K,  I_0\cap J_0=\emptyset\atop |I_0|=|J_0|=\al-|K|} 
a_{I_0,J_0}b_{J_0,I_0} \left(1-q^{\al}\,  \frac{z_{I_0}}{z_{J_0}}\right)D_KD_L\label{onelem}
\end{eqnarray}
and finally 
\begin{eqnarray}
&&v^{n\Lambda_{\al,\al}+2\sum_\beta \Lambda_{\al,\beta}-r-1}{\mathcal D}_{\al+1,n}\, {\mathcal D}_{\al-1,n}
=v^nv^{\frac{1}{2}(\Lambda_{\al+1,\al+1}+\Lambda_{\al-1,\al-1})n+\sum_\beta \Lambda_{\al+1,\beta}+\Lambda_{\al-1,\beta}}
{\mathcal D}_{\al+1,n}\, {\mathcal D}_{\al-1,n} \nonumber \\
&&\qquad\qquad =v^{n\al^2} \sum_{K\subset L\subset [1,r+1]\atop |L|\leq 2\al,\ |K|\leq \al}\!\!\!\!\!  u_{K,L}(n)
\sum_{I_0\cup J_0=L\setminus K,  I_0\cap J_0=\emptyset\atop |I_0|=\al+1-|K|, |J_0|=\al-1-|K|} 
a_{I_0,J_0}b_{J_0,I_0} D_KD_L\label{twolem}
\end{eqnarray}
where we have used the relations
\begin{eqnarray*}
2+ \Lambda_{\al+1,\al+1}+\Lambda_{\al-1,\al-1}-2\Lambda_{\al,\al}&=&0\\
\Lambda_{\al+1,\beta}+\Lambda_{\al-1,\beta}-2\Lambda_{\al,\beta}&=&-(r+1)\delta_{\al,\beta} 
\end{eqnarray*}
The relation \eqref{qsysdem} follows by identifying equations \eqref{onelem} and \eqref{twolem}
by applying Lemma \ref{secondlemma} for $a=\al-|K|$ to the second summation for $K,L$ fixed.
This completes the proof of Theorem \ref{repsQ}.

\subsection{Graded characters and difference raising operators}

\subsubsection{The main results}

In this section, we show that, in a way analogous to how the Kirillov-Noumi difference operators are raising operators
for Macdonald polynomials \cite{kinoum},  
our generalized degenerate Macdonald operators are raising operators for the graded characters.

\begin{thm}\label{raisingthm}
For $\bn=\{ n_i^{(\al)}\}_{\al\in [1,r];i\in \Z_{>0}}$, the coefficients $G^{(k)}_\bn$ \eqref{defGkn} for $A_r$ at level $k$ are given by
the iterated action of the generalized Macdonald operators \eqref{qan} on the constant function $1$:
\begin{equation}\label{Gfound} 
G^{(k)}_\bn =\prod_{\al=1}^r({\mathcal D}_{\al,k})^{n_k^{(\al)}}\prod_{\al=1}^r({\mathcal D}_{\al,k-1})^{n_{k-1}^{(\al)}}\cdots
\prod_{\al=1}^r({\mathcal D}_{\al,1})^{n_1^{(\al)}}\, 1 
\end{equation}
\end{thm}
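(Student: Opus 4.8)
The plan is to reduce the whole statement, for every level $k$, to a single \emph{top-level intertwining relation}: adding one factor at the top level is realized by the corresponding top operator,
$$
G^{(k)}_{\bn+\epsilon_{\al,k}}=\mathcal{D}_{\al,k}\,G^{(k)}_\bn,\qquad (\epsilon_{\al,k})_j^{(\beta)}=\delta_{\beta,\al}\delta_{j,k}.
$$
Granting this, the theorem follows by induction. For fixed $k$ one builds up the top exponents $n_k^{(\al)}$ from the configuration $n_k^{(\al)}=0$ by iterating the relation; since the same-level operators commute (relation \eqref{commutdem} with $n=p=k$ carries trivial phase), the order is immaterial and one obtains $\prod_\al(\mathcal{D}_{\al,k})^{n_k^{(\al)}}$ applied to $G^{(k)}_\bn\big|_{n_k=0}$. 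But with $n_k^{(\al)}=0$ the defining monomial \eqref{defGkn} loses its $i=k$ block, so $G^{(k)}_\bn\big|_{n_k=0}=G^{(k-1)}_{\bn'}$, and the induction on $k$ peels off one level at a time down to the normalization $G^{(1)}_{\mathbf{0}}=1$.

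First I would treat $k=1$, where the intertwining reads $G^{(1)}_{\bn+\epsilon_{\al,1}}=\mathcal{D}_{\al,1}G^{(1)}_\bn$ and, by Theorem \ref{oneunique}, it suffices to check that $H_\bn:=\prod_\al(\mathcal{D}_{\al,1})^{n^{(\al)}}\,1$ (extended by $H_\bn=0$ whenever some $n^{(\al)}<0$) obeys the level-$1$ difference equations of Theorem \ref{difconeN} together with the initial data $H_{\mathbf{0}}=1$ and $H_\bn=0$ for $\sigma(\bn)\in\{-1,0\}$, $\bn\neq\mathbf{0}$. The crucial input is the operator analogue of Corollary \ref{conscor}: since the $\mathcal{D}_{\al,n}$ realize the dual quantum $Q$-system (Theorem \ref{repsQ}), one may form the dual weights and dual conserved quantities $C^*_m$ exactly as in \eqref{weights}--\eqref{conservedm}, which by Theorem \ref{consQ} applied to $\mathcal{A}^{\rm op}$ are $n$-independent; evaluating them in the limit as in Lemma \ref{limlem} shows that, as operators on $\C[\bz]$, each $C^*_m$ is simply multiplication by $v^{\frac{mr}{2}}e_m(\bz)$. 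With this, the two-way computation behind Theorem \ref{difconeN}---equate $v^{\frac{mr}{2}}e_m(\bz)H_\bn=C^*_m H_\bn$ against the expansion of $C^*_m$ at $n=0$ in $\mathcal{D}_{\al,0},\mathcal{D}_{\al,1}$, commuted through $\prod_\al(\mathcal{D}_{\al,1})^{n^{(\al)}}$ by \eqref{commutdem}---reproduces \eqref{diffeqkone}, while a negative exponent at level $1$ forces the corresponding shifted configuration out of the recursion, matching the initial data used in Theorem \ref{oneunique}.

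For $k\geq 2$, where Theorem \ref{oneunique} no longer applies and the level-$k$ difference equations under-determine $G^{(k)}_\bn$, I would prove the top-level intertwining by induction on $k$. Using \eqref{qQsys} one expresses the top generator $\Q_{\al,k}=v^{-\Lambda_{\al,\al}}\bigl(\Q_{\al,k-1}^2-\Q_{\al+1,k-1}\Q_{\al-1,k-1}\bigr)\Q_{\al,k-2}^{-1}$ through the two preceding levels, and likewise $\mathcal{D}_{\al,k}$ through the dual relation \eqref{qsysdem} in terms of $\mathcal{D}_{\al,k-1}$ and $\mathcal{D}_{\al,k-2}$; matching the two sides then reduces, via the commutation relations \eqref{commualbet} and \eqref{commutdem} and the induction hypothesis at levels $k-1$ and $k-2$, to identities already established. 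The order reversal between the left-to-right product in \eqref{defGkn} and the outermost-first product in \eqref{Gfound} is accounted for precisely by the anti-isomorphism $\mathcal{A}^*\cong\mathcal{A}^{\rm op}$.

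The hard part will be the core of the intertwining at its base: showing that left-multiplying the argument of $\phi$ by $\Q_{\al,k}$ immediately before $\tau(\bz)$ becomes, after normal ordering and extraction of the constant term in the $\Q_{\al,1}$, exactly the action of the difference operator $\mathcal{D}_{\al,k}$ on the resulting function of $\bz$. The nonlinearity of the $Q$-system and the interplay of normal ordering with $\phi$ make this step delicate, and I expect it to require combining the explicit Schur/Pieri structure of $\tau(\bz)$ in \eqref{taudef} with the combinatorial identities of Lemmas \ref{firstlemma} and \ref{secondlemma} that already underlie Theorem \ref{repsQ}. This is the technical heart of the argument, and the detailed verification is what I would relegate to Appendix B.
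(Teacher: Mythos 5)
Your level-$1$ step is essentially the paper's own argument (dual conserved quantities ${\mathcal C}_m$ acting as multiplication by $v^{\frac{mr}{2}}e_m(\bz)$, identification of the resulting difference equations with those of Theorem \ref{difconeN}, then uniqueness as in Theorem \ref{oneunique}), but even there one point cannot be settled by fiat: you cannot simply ``extend by $H_\bn=0$ whenever some $n^{(\al)}<0$''. Inserting ${\mathcal C}_m$ genuinely produces terms in which ${\mathcal D}_{\al,1}^{-1}$ acts, and ${\mathcal D}_{\al,1}^{-1}\,1\neq 0$; what makes these terms drop out is that, by the canonical form of Lemma \ref{uniqp} transported to the dual side, each of them can be rewritten with a factor ${\mathcal D}_{\al,-1}$ acting first on $1$, and ${\mathcal D}_{\al,-1}\,1=0$ by Lemma \ref{lemvan} and Corollary \ref{corus} --- this vanishing identity is the actual content of the paper's Appendix B, not a verification of the intertwining.

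The genuine gap is in your $k\geq 2$ step. The substitution $\Q_{\al,k}=v^{-\Lambda_{\al,\al}}\bigl(\Q_{\al,k-1}^2-\Q_{\al+1,k-1}\Q_{\al-1,k-1}\bigr)\Q_{\al,k-2}^{-1}$ forces you either to control right multiplication by the inverse generator $\Q_{\al,k-2}^{-1}$ under the constant-term map, or to prove instead $\Psi(M\Q_{\al,k}\Q_{\al,k-2})={\mathcal D}_{\al,k-2}{\mathcal D}_{\al,k}\Psi(M)$ and then cancel the injective operator ${\mathcal D}_{\al,k-2}$; in either variant the level-$(k-2)$ intertwining must be applied to an argument that still contains the level-$k$ generator $\Q_{\al,k}$, so an induction whose hypotheses are confined to levels $k-1$ and $k-2$ does not close. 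Moreover the base of your induction ($k=2$) requires an intertwining for $\Q_{\al,0}$, and $\Q_{\al,0}$ does \emph{not} $q$-commute with $\Q_{\al,2}$ (relation \eqref{commualbet} holds only for $|k-k'|\leq|\al-\beta|+1$), so ``the commutation relations \eqref{commualbet} and \eqref{commutdem}'' cannot move it into normal-ordered position: that manipulation requires the $Q$-system itself, which is precisely the circularity the paper avoids. The paper's resolution is structural: Lemma \ref{uniqp} gives every polynomial, modulo the quantum $Q$-system, a unique canonical form with $\Q_{\al,-1}$'s on the left, coefficients in the $\Q_{\gamma,0}$'s, and $\Q_{\beta,1}$'s on the right; $\Psi$ annihilates the $\Q_{\al,-1}$-terms, dually ${\mathcal D}_{\al,-1}\,1=0$ annihilates their operator images, and both sides collapse to the \emph{same} polynomial in the level-$1$ variables, proving Theorem \ref{repsthm} for all $k$ at once by reduction to the level-$1$ case --- with no induction on $k$ and no direct computation against $\tau(\bz)$. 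Your proposal instead defers exactly this step, ``the technical heart,'' to a hoped-for Schur/Pieri verification relegated to an appendix; no such computation appears in the paper, you give no indication of how it would go, and without it the statement for $k\geq 2$ remains unproved.
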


Using the relation \eqref{chitoG}, we immediately deduce the following:

\begin{thm}\label{almost}
The graded characters for ${\mathfrak sl}_{r+1}$ at level $k$ are given by:
\begin{eqnarray}
\chi_\bn(q^{-1},\bz)&= &v^{\frac{1}{2} \sum_{i,j,\al,\beta} n_i^{(\al)}{\rm Min}(i,j)\Lambda_{\al,\beta}n_j^{(\beta)}+\sum_{i,\al,\beta} n_i^{(\al)}\Lambda_{\al,\beta}+\frac{1}{2}\sum_\al \Lambda_{\al,\al}+\sum_{\al<\beta}\Lambda_{\al,\beta}} \nonumber \\
&& \quad \times  \prod_{\al=1}^r({\mathcal D}_{\al,k})^{n_k^{(\al)}}\prod_{\al=1}^r({\mathcal D}_{\al,k-1})^{n_{k-1}^{(\al)}}\cdots
\prod_{\al=1}^r({\mathcal D}_{\al,1})^{n_1^{(\al)}}\, 1
\end{eqnarray}
\end{thm}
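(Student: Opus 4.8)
The statement is a corollary obtained by chaining two facts already in hand, and the plan is simply to carry out that substitution and then settle one exponent identity. The plan is to start from the relation \eqref{chitoG}, namely $\chi_\bn(q^{-1},\bz)=v^{\sum_{\al,\beta,i}n_i^{(\al)}\Lambda_{\al,\beta}+\frac12\bn\cdot(\Lambda\otimes A)\bn}\,G^{(k)}_\bn(\bz)$, and to substitute for $G^{(k)}_\bn$ the operator-product expression \eqref{Gfound} supplied by Theorem \ref{raisingthm}, which I treat as a black box. Because \eqref{Gfound} carries no scalar prefactor---it is the bare iterated action $\prod_{\al}({\mathcal D}_{\al,k})^{n_k^{(\al)}}\cdots\prod_{\al}({\mathcal D}_{\al,1})^{n_1^{(\al)}}\,1$---the entire content of the asserted formula collapses to the claim that the exponent in \eqref{chitoG} agrees with the exponent displayed in Theorem \ref{almost}. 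Thus I would reduce the whole proof to a single numerical identity between two $v$-exponents and verify it.

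To do so I would first pin down the shorthand $\bn\cdot(\Lambda\otimes A)\bn$: here $A$ is the matrix $A_{ij}=\mathrm{Min}(i,j)$, so that $\bn\cdot(\Lambda\otimes A)\bn=\sum_{i,j,\al,\beta}n_i^{(\al)}\mathrm{Min}(i,j)\Lambda_{\al,\beta}n_j^{(\beta)}$. With this reading the quadratic part $\frac12\sum_{i,j,\al,\beta}n_i^{(\al)}\mathrm{Min}(i,j)\Lambda_{\al,\beta}n_j^{(\beta)}$ and the linear part $\sum_{i,\al,\beta}n_i^{(\al)}\Lambda_{\al,\beta}$ of the Theorem \ref{almost} exponent match the two terms of the \eqref{chitoG} exponent verbatim. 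What is left over is the purely $\bn$-independent constant $\frac12\sum_{\al}\Lambda_{\al,\al}+\sum_{\al<\beta}\Lambda_{\al,\beta}$, which is present in Theorem \ref{almost} but absent from \eqref{chitoG}; accounting for this constant is the only nontrivial point of the argument.

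I expect this constant to be the main (if modest) obstacle, because it is exactly the normalization $v^{\frac12\sum_{\al}\Lambda_{\al,\al}+\sum_{\al<\beta}\Lambda_{\al,\beta}}$ built into $\tau(\bz)$ in \eqref{taudef}, and one must confirm that it threads correctly through the definition \eqref{defGkn} of $G^{(k)}_\bn$ and through the passage from \eqref{gtensopro} to \eqref{chitoG}. Concretely I would trace it as follows: the $\ell_\al$-dependent half $\frac12\sum_{\al}\ell_\al\Lambda_{\al,\al}$ of the \eqref{gtensopro} prefactor is absorbed into the tails through the rescaling $\xi_\al=v^{\Lambda_{\al,\al}/2}\theta_{\al}$ of \eqref{tailbits}--\eqref{xialdef} (so that $\prod_{\al}\xi_\al^{\ell_\al+1}$ carries $v^{\frac12\sum_{\al}(\ell_\al+1)\Lambda_{\al,\al}}$), leaving behind the $\bn$-independent remainder $\frac12\sum_{\al}\Lambda_{\al,\al}$ together with the normal-ordering constant $\sum_{\al<\beta}\Lambda_{\al,\beta}$ produced when the opening factor $\prod_{\al}\Q_{\al,1}$ is commuted into place. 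A clean sanity check on the placement of this constant is the base case $\bn=0$, where \eqref{Gfound} gives $1$: I would verify that the resulting overall power of $v$ in Theorem \ref{almost} is recorded consistently with the normalization $G^{(1)}_0=1$ fixed below \eqref{defGkn}, which is the one spot where a convention mismatch could hide.
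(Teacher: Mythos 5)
Your overall route is exactly the paper's: the paper proves Theorem \ref{almost} by nothing more than substituting the expression \eqref{Gfound} of Theorem \ref{raisingthm} into \eqref{chitoG}, so reducing everything to a comparison of $v$-exponents is the right frame, and treating Theorem \ref{raisingthm} as a black box is legitimate. The gap is in your accounting of the constant $\frac12\sum_\al\Lambda_{\al,\al}+\sum_{\al<\beta}\Lambda_{\al,\beta}$. That constant is \emph{not} ``left behind'' in the passage from \eqref{gtensopro} to \eqref{chitoG}: it is consumed there. Normal-ordering the insertions $\prod_\al\Q_{\al,1}\Q_{\al,0}^{-1}$ in \eqref{gtensopro} produces the factor $v^{\sum_\al\Lambda_{\al,\al}+\sum_{\al<\beta}\Lambda_{\al,\beta}}$ (each $\Q_{\al,0}^{-1}$ moving left past $\Q_{\beta,1}$, $\beta\leq\al$, contributes $v^{\Lambda_{\al,\beta}}$), and this is cancelled \emph{exactly} by the normalization $v^{\frac12\sum_\al\Lambda_{\al,\al}+\sum_{\al<\beta}\Lambda_{\al,\beta}}$ built into $\tau(\bz)$ in \eqref{taudef} together with the $v^{\frac12\sum_\al\Lambda_{\al,\al}}$ coming from the ``$+1$'' in $\prod_\al\xi_\al^{\ell_\al+1}$ under the rescaling $\xi_\al=v^{\Lambda_{\al,\al}/2}\theta_\al$ of \eqref{tailbits}. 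That cancellation is precisely what makes \eqref{chitoG} hold with \emph{no} $\bn$-independent term and enforces $G^{(1)}_0=1$. Invoking the $\tau$-normalization a second time to manufacture the constant of Theorem \ref{almost} double counts it: your tracing, carried out to the end, proves \eqref{chitoG} as printed rather than producing a leftover constant on top of it.

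Had you executed the sanity check you propose, it would have exposed this. At $\bn=0$, \eqref{chitoG} combined with \eqref{Gfound} gives $\chi_0=1$, as it must, whereas the displayed formula of Theorem \ref{almost} gives $v^{\frac12\sum_\al\Lambda_{\al,\al}+\sum_{\al<\beta}\Lambda_{\al,\beta}}\neq 1$. Likewise for $r=k=1$, $n^{(1)}=1$: one computes ${\mathcal D}_{1,1}\,1=v^{-3/2}(z_1+z_2)$, so \eqref{chitoG} yields $\chi=z_1+z_2$ (the correct character of $V(\omega_1)$), while the printed theorem, with its extra $v^{1/2}$, gives $v^{1/2}(z_1+z_2)$. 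In other words, the faithful substitution yields Theorem \ref{almost} with exponent $\frac12\sum_{i,j,\al,\beta}n_i^{(\al)}{\rm Min}(i,j)\Lambda_{\al,\beta}n_j^{(\beta)}+\sum_{i,\al,\beta}n_i^{(\al)}\Lambda_{\al,\beta}$ and \emph{without} the constant; the constant in the printed statement is inconsistent with \eqref{chitoG}, with Corollary \ref{gracor} (whose prefactor is a pure power of $q$ matching the constant-free version), and with the normalization $\chi_0=1$, and should be read as an erratum. Your proposal, by asserting the two exponents agree as displayed and supplying an (incorrect) mechanism for the constant, validates a false identity that your own $\bn=0$ test would have refuted.
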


Relaxing the condition $z_1z_2\cdots z_{r+1}=1$, we may restate this result in terms of the family of difference operators 
$M_{\al,n}$ defined as:
\begin{equation}\label{otmacdo}
M_{\al,n}=\sum_{I\subset [1,r+1]\atop |I|=\al } (z_I)^n a_I(\bz)\, \Gamma_I
=v^{\frac{\Lambda_{\al,\al}}{2}n+\sum_\beta \Lambda_{\al,\beta}}\, {\mathcal D}_{\al,n} \Delta^{-\al}
\end{equation}
These satisfy a renormalized version of the dual quantum $Q$-system:
\begin{eqnarray}
M_{\al,n}\, M_{\beta,p} &=&q^{{\rm Min}(\al,\beta)(p-n)} \, M_{\beta,p} \, M_{\al,n} 
\qquad (|p-n|\leq |\beta-\al|+1)\label{commutdemnew}\\
q^{\al}\, M_{\al,n+1}\, M_{\al,n-1}&=&(M_{\al,n})^2-M_{\al+1,n}\,M_{\al-1,n}\qquad (\al\in [1,r];n\in \Z)
\label{qsysdemnew}
\end{eqnarray}
with $M_{0,n}=1$ and $M_{r+1,n}=(z_1z_2\cdots z_{r+1})^n \Delta^{-r-1}$. Note also that $M_{\al,0}$ is equal to the 
degenerate Macdonald operator $M_\al$ 
of eq.\eqref{degmac}.
We have:
\begin{cor}\label{gracor}
The graded characters for ${\mathfrak sl}_{r+1}$ at level $k$ are given by:
\begin{eqnarray}
\chi_\bn(q^{-1},\bz)&= &q^{-\frac{1}{2} \sum_{i,j,\al,\beta} n_i^{(\al)}{\rm Min}(i,j){\rm Min}(\al,\beta)n_j^{(\beta)}+\frac{1}{2}\sum_{i,\al} i\al n_i^{(\al)}} \nonumber \\
&& \quad \times  \prod_{\al=1}^r(M_{\al,k})^{n_k^{(\al)}}\prod_{\al=1}^r(M_{\al,k-1})^{n_{k-1}^{(\al)}}\cdots
\prod_{\al=1}^r(M_{\al,1})^{n_1^{(\al)}}\, 1
\end{eqnarray}
\end{cor}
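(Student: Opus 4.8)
The plan is to obtain the Corollary directly from Theorem~\ref{almost}, simply by rewriting each operator ${\mathcal D}_{\al,n}$ in terms of $M_{\al,n}$ and then discarding the auxiliary scalings. Relation \eqref{otmacdo} may be read as ${\mathcal D}_{\al,n}=v^{-\frac{\Lambda_{\al,\al}}{2}n-\sum_\beta\Lambda_{\al,\beta}}\,M_{\al,n}\,\Delta^{\al}$, where $\Delta(\bz)=(vz_1,\dots,vz_{r+1})$. First I would substitute this expression for every factor in the product of Theorem~\ref{almost}. This isolates a transparent scalar $v^{c}$, with $c=-\sum_{i,\al}n_i^{(\al)}\big(\tfrac{\Lambda_{\al,\al}}{2}i+\sum_\beta\Lambda_{\al,\beta}\big)$, and leaves a word in which each $M_{\al,i}$ is followed by a power $\Delta^{\al}$. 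The idea is then to commute all of the $\Delta$'s to the right-hand end of the word, where they act on the constant $1$.

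The only commutation needed is that $\Delta$ fixes each $\Gamma_i$ and each homogeneous degree-zero coefficient $a_I(\bz)$ of \eqref{productnotation}, while rescaling $(z_I)^p\mapsto v^{|I|p}(z_I)^p$; hence $\Delta^{m}\,M_{\beta,p}=v^{m\beta p}\,M_{\beta,p}\,\Delta^{m}$. Since no two $M$'s are ever interchanged, they survive in exactly the left-to-right order of the Corollary, and the word becomes $\big(\prod M\big)\,\Delta^{N}$ with $N=\sum_{i,\al}\al\,n_i^{(\al)}$; as $\Delta^{N}1=1$, the residual scaling drops out. The price of the rearrangement is a scalar $v^{S}$: if the factors are labelled $t=1,\dots,T$ from left to right, with factor $t$ of type $a_t\in[1,r]$ at level $\ell_t$, then $S=\sum_{s<t}a_s a_t\,\ell_t$. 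Collecting contributions block by block, and writing $P_i=\sum_\al\al\,n_i^{(\al)}$, $Q_i=\sum_\al\al^2 n_i^{(\al)}$, this sums to $S=\half\sum_{i,j}{\rm Min}(i,j)P_iP_j-\half\sum_i i\,Q_i$; here the equal-level contributions are symmetric in the two block indices, which is what makes $S$ insensitive to the (immaterial, by the $n=p$ case of \eqref{commutdemnew}) ordering chosen inside each level.

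It then remains to verify that the accumulated power of $v$, namely $E+c+S$ with $E$ the prefactor of Theorem~\ref{almost}, reproduces the exponent of the Corollary once $q=v^{-r-1}$ is substituted. The single non-formal ingredient is the inverse-Cartan identity $\al\beta+\Lambda_{\al,\beta}=(r+1)\,{\rm Min}(\al,\beta)$, immediate from \eqref{lambdadef}. Applied to the quadratic part, it collapses $\half\sum{\rm Min}(i,j)\,n_i^{(\al)}n_j^{(\beta)}\big(\Lambda_{\al,\beta}+\al\beta\big)$ to $\tfrac{r+1}{2}\sum{\rm Min}(i,j){\rm Min}(\al,\beta)n_i^{(\al)}n_j^{(\beta)}$; applied with $\al=\beta$ to the linear part, $\Lambda_{\al,\al}+\al^2=\al(r+1)$ turns the surviving linear terms into $-\tfrac{r+1}{2}\sum_{i,\al}i\,\al\,n_i^{(\al)}$, matching $-(r+1)F$. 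I expect the real obstacle to be precisely this exponent accounting: keeping the three sources of powers of $v$ --- the explicit prefactors of \eqref{otmacdo}, the commutation scalar $S$, and the normalization prefactor $E$ of Theorem~\ref{almost} (which carries the $\bn$-independent constant of $\tau$ from \eqref{taudef}) --- cleanly separated, and confirming that the $\bn$-dependent terms assemble exactly as above. With the identity $E+c+S=-(r+1)F$ established on the $\bn$-dependent part, the Corollary follows.
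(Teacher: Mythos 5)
Your proposal is correct and is precisely the paper's own (one-sentence) proof, spelled out in full: substitute \eqref{otmacdo} into Theorem \ref{almost}, push all the $\Delta$'s to the right using $\Delta M_{\al,n}=v^{n\al}M_{\al,n}\Delta$ and $\Delta^N 1=1$, and collapse the exponents with $\Lambda_{\al,\beta}+\al\beta=(r+1)\,{\rm Min}(\al,\beta)$; your bookkeeping of $c$, of $S=\frac{1}{2}\sum_{i,j}{\rm Min}(i,j)P_iP_j-\frac{1}{2}\sum_i i\,Q_i$, and of the quadratic and linear parts is accurate. The residual $\bn$-independent factor $v^{\frac{1}{2}\sum_\al \Lambda_{\al,\al}+\sum_{\al<\beta}\Lambda_{\al,\beta}}$ that forces your hedge about ``the $\bn$-dependent part'' is in fact a typo in the printed prefactor of Theorem \ref{almost}: that constant is absent when the theorem is rederived from \eqref{chitoG} and Theorem \ref{raisingthm} (and keeping it would give, e.g.\ for $r=k=n_1^{(1)}=1$, the incorrect $\chi=v^{1/2}(z_1+z_2)$ rather than $z_1+z_2$), so your computation, applied to the corrected statement, yields the Corollary exactly as stated.
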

\begin{proof}
We use the relation \eqref{otmacdo} to rewrite the result of Theorem \ref{almost}. We make use of the commutation
relation $\Delta M_{\al,n}=v^{n\al} M_{\al,n}\Delta$, and of $\Lambda_{\al,\beta}+\al \beta=(r+1){\rm Min}(\al,\beta)$.
\end{proof}

\begin{remark}
The iterated action of the raising operators $M_{\al,n}$ on the function $1$ results clearly in a symmetric polynomial of the $z$'s 
with coefficients that are polynomial in $q$. 
On the other hand, the prefactor is a negative integer power of $q$, as
\begin{eqnarray*} &&\frac{1}{2} \sum_{i,j,\al,\beta} n_i^{(\al)}{\rm Min}(i,j){\rm Min}(\al,\beta)n_j^{(\beta)}-\frac{1}{2}\sum_{i,\al} i\al n_i^{(\al)}\\
&&\qquad =
\sum_{i,\al}i\al \frac{n_i^{(\al)}(n_i^{(\al)}-1)}{2}+\sum_{i<j\, {\rm or}\, \al<\beta}n_i^{(\al)}{\rm Min}(i,j){\rm Min}(\al,\beta)n_j^{(\beta)}\in\Z_+
\end{eqnarray*}
We deduce that $\chi_\bn(q^{-1},\bz)$ is a polynomial of $q,q^{-1}$. Moreover the graded characters have the limit
$\lim_{q\to\infty} \chi_\bn(q^{-1},\bz)=s_\lambda(\bz)$ which sends the graded tensor product to its top component,
with $\lambda_\al=\sum_{i=1}^k i n_i^{(\al)}$, 
hence $\chi_\bn(q^{-1},\bz)$ is a polynomial of $q^{-1}$,
as expected from its definition.
\end{remark}

\subsubsection{Proof in the case of level $1$}\label{leveoneproof}

Let us now turn to the proof of Theorem \ref{raisingthm}. We will proceed in two steps. First, we will show the theorem in the case $k=1$ only.
The idea is to show that the expression \eqref{Gfound}
satisfies all the difference equations that determine $G^{(1)}_\bn$. 
To this end we use the conserved quantities of the dual quantum $Q$-system, 
easily obtained by applying the anti-homorphism 
$$*: {\mathcal A}\to {\mathcal A}^*, \qquad  \Q_{\al,k}\mapsto \Q_{\al,k}^*$$
such that $(AB)^*=B^*A^*$ for all $A,B\in {\mathcal A}$ and $v^*=v$,
and then evaluating in the polynomial representation where $\pi(\Q_{\al,k}^*)={\mathcal D}_{\al,k}$.
The quantities ${\mathcal Y}_\al(n)=\pi(y_\al(n)^*)$
of (\ref{yodd}-\ref{yeven}) are expressed in terms of  ${\mathcal D}_{\al,n}$, ${\mathcal D}_{\al,n+1}$ as:
\begin{eqnarray*}
{\mathcal Y}_{2\al-1}(n)&=&
{\mathcal D}_{\al-1,n}{\mathcal D}_{\al,n}^{-1}{\mathcal D}_{\al-1,n+1}^{-1}{\mathcal D}_{\al,n+1}\\
{\mathcal Y}_{2\al}(n)&=&
-{\mathcal D}_{\al-1,n}{\mathcal D}_{\al,n}^{-1}{\mathcal D}_{\al,n+1}^{-1}{\mathcal D}_{\al+1,n+1}
\end{eqnarray*}
Here, we use the formal (left and right) inverse ${\mathcal D}_{\al,k}^{-1}$ of the difference operator 
${\mathcal D}_{\al,k}$ defined as follows. If $|v|>1$, setting $I_\al=\{1,2,...,\al\}$, we write the convergent series:
\begin{eqnarray*}
{\mathcal D}_{\al,k}^{-1}&=&\left(z_{I_\al}^k a_{I_\al}(\bz) D_{I_\al}\left( \sum_{I\subset [1,r+1],\, |I|=\al}
D_{I_\al}^{-1} \frac{z_I^k a_I(\bz)}{z_{I_\al}^k a_{I_\al}(\bz)}  D_I \right)\right)^{-1} \\
&=& \sum_{n\geq 0} \left(\sum_{I\subset [1,r+1],\, |I|=\al}
D_{I_\al}^{-1} \frac{z_I^k a_I(\bz)}{z_{I_\al}^k a_{I_\al}(\bz)}  D_I \right)^n\, 
D_{I_\al}^{-1}\frac{1}{z_{I_\al}^k a_{I_\al}(\bz) }
\end{eqnarray*}
If $|v|<1$, we must use ${\bar I_{r-\al+1}}=\{r+1,r,...,r-\al+2\}$ instead of $I_\al$.

Noting that $*$ is an anti-homorphism which inverts the order of weights, we get the following:
\begin{lemma}
The conserved quantities ${\mathcal C}_m=\pi(C_m^*)$, $m=0,1,...,r+1$ of the dual quantum 
$Q$-system are expressed in terms of 
the operators ${\mathcal D}_{\al,n}$, ${\mathcal D}_{\al,n+1}$ as:
\begin{equation}\label{consdem}
{\mathcal C}_m =\sum_{{\rm Hard}\, {\rm Particle}\,  {\rm configurations}\atop
i_1<i_2<\cdot <i_m\,  {\rm on}\, {\mathcal G}_r} {\mathcal Y}_{i_1}(n){\mathcal Y}_{i_2}(n)\cdots {\mathcal Y}_{i_m}(n) 
\end{equation}
\end{lemma}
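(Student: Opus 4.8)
The plan is to obtain $\mathcal{C}_m=\pi(C_m^*)$ by transporting the explicit formula \eqref{conservedm} for $C_m$ through the anti-homomorphism $*$ and then through the representation $\pi$, using two structural facts: that $*$ \emph{reverses} the order of products, whereas $\pi$ is an ordinary (order-preserving) algebra homomorphism of $\mathcal{A}^*$. Since the formal inverses ${\mathcal D}_{\al,n}^{-1}$ introduced just above the lemma make $\pi$ well defined on the localized elements appearing in the weights, every term below lands in the representation without extra work.

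First I would record the images of the individual weights. Applying $*$ to \eqref{yodd}--\eqref{yeven} and using $(AB)^*=B^*A^*$ together with $(\Q^{-1})^*=(\Q^*)^{-1}$ (which follows from $*$ being a unital anti-homomorphism), one gets
$$y_{2\al-1}(n)^*=\Q_{\al-1,n}^*(\Q_{\al,n}^*)^{-1}(\Q_{\al-1,n+1}^*)^{-1}\Q_{\al,n+1}^*,$$
$$y_{2\al}(n)^*=-\,\Q_{\al-1,n}^*(\Q_{\al,n}^*)^{-1}(\Q_{\al,n+1}^*)^{-1}\Q_{\al+1,n+1}^*.$$
Evaluating under $\pi(\Q_{\al,n}^*)={\mathcal D}_{\al,n}$ reproduces exactly the operators ${\mathcal Y}_{2\al-1}(n)$ and ${\mathcal Y}_{2\al}(n)$ displayed above the lemma; this is the bookkeeping step that pins down the ${\mathcal Y}$'s, and it must be checked carefully against the minus sign and against which factors carry inverses.

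Next I would apply $*$ to the full sum. Each summand of \eqref{conservedm} is the \emph{decreasing} product $y_{i_m}(n)y_{i_{m-1}}(n)\cdots y_{i_1}(n)$, so the anti-homomorphism turns it into the \emph{increasing} product $y_{i_1}(n)^*y_{i_2}(n)^*\cdots y_{i_m}(n)^*$, while leaving the index set and its defining inequalities untouched. Hence
$$C_m^*=\sum_{\rm config} y_{i_1}(n)^*y_{i_2}(n)^*\cdots y_{i_m}(n)^*,$$
the sum still ranging over the hard-particle configurations $i_1<i_2<\cdots<i_m$ on ${\mathcal G}_r$ cut out by the exclusion rules $i_\ell<i_{\ell+1}-1$ for $i_\ell$ odd and $i_\ell<i_{\ell+1}-2$ for $i_\ell$ even. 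Applying the order-preserving homomorphism $\pi$ term by term and substituting ${\mathcal Y}_i(n)=\pi(y_i(n)^*)$ then yields \eqref{consdem}. Conservation, i.e. independence of $n$, is inherited for free: $C_m$ is $n$-independent by Theorem \ref{consQ}, so the fixed element $C_m^*$ and its image $\pi(C_m^*)$ are as well.

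The argument is essentially formal, so I do not expect a genuine obstacle; the one point demanding real care is the order reversal. I would want to verify explicitly that reversing the product converts the decreasing ordering used in \eqref{conservedm} into precisely the increasing ordering written in \eqref{consdem}, and that the exclusion constraints — being conditions only on the gaps between consecutive indices — carry over verbatim to the dual system rather than having to be re-derived. Once this is confirmed, the identity is immediate.
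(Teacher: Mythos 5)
Your proposal is correct and follows the same route as the paper, whose entire justification is the one-line observation that $*$ is an anti-homomorphism reversing the order of the weights, so that the decreasing products $y_{i_m}(n)\cdots y_{i_1}(n)$ in \eqref{conservedm} become the increasing products ${\mathcal Y}_{i_1}(n)\cdots{\mathcal Y}_{i_m}(n)$ under $\pi\circ *$, with the hard-particle index set untouched. Your write-up simply makes explicit the bookkeeping (images of the weights, preservation of the exclusion constraints, inheritance of $n$-independence) that the paper leaves implicit.
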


For instance, we have the first non-trivial conserved quantity, obtained from \eqref{finacone}:
\begin{equation}\label{lefinacone}
{\mathcal C}_1=\sum_{\al=1}^{r+1} v^{r} 
{\mathcal D}_{\al-1,n+1}^{-1}{\mathcal D}_{\al,n+1}{\mathcal D}_{\al-1,n}{\mathcal D}_{\al,n}^{-1}
-\sum_{\al=1}^r 
v^{-1}{\mathcal D}_{\al,n+1}^{-1}{\mathcal D}_{\al+1,n+1} {\mathcal D}_{\al-1,n}{\mathcal D}_{\al,n}^{-1}
\end{equation}

All quantities ${\mathcal C}_m$ \eqref{consdem} are conserved i.e. they are independent of $n$,
and we may in particular express them in the limit $n\to\infty$ as we did before. 

\begin{thm}\label{actdem}
For all $m=0,1,...,r+1$,
the conserved quantity ${\mathcal C}_m$ \eqref{consdem} of the dual quantum $Q$-system acts on functions of 
$\bz$ by multiplication by $v^{\frac{mr}{2}}$ times 
the $m$-th elementary symmetric function $e_m(\bz)$,
namely
$${\mathcal C}_m=v^{\frac{mr}{2}} \sum_{1\leq i_1<i_2<\cdots <i_m \leq r+1} z_{i_1}z_{i_2}\cdots z_{i_m} 
=v^{\frac{mr}{2}} \, e_m(\bz)$$
\end{thm}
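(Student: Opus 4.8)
The plan is to exploit the fact that ${\mathcal C}_m=\pi(C_m^*)$ is conserved, i.e.\ independent of $n$, and to evaluate it in the limit $n\to\infty$, mirroring the computation of Lemma \ref{limlem} on the ${\mathcal A}$-side. Since $*$ is an anti-homomorphism and $\pi$ a representation (Theorem \ref{repsQ}), applying $*$ and then $\pi$ to the formulas of Lemma \ref{limlem} expresses the dual weights through the operators $\Xi_{\al,n}:=\pi(\xi_{\al,n}^*)=v^{\Lambda_{\al,\al}/2}{\mathcal D}_{\al,n+1}^{-1}{\mathcal D}_{\al,n}$, namely ${\mathcal Y}_{2\al-1}(n)=v^{r/2}\,\Xi_{\al-1,n}\Xi_{\al,n}^{-1}$ and ${\mathcal Y}_{2\al}(n)=v^{r/2}\big(\Xi_{\al,n-1}^{-1}\Xi_{\al,n}-1\big)\Xi_{\al,n}\Xi_{\al+1,n}^{-1}$ (the reversal of factors relative to \eqref{weights} being due to $*$). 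The first task is therefore to compute $\Xi_\al:=\lim_{n\to\infty}\Xi_{\al,n}$.

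The main computation is to show that each $\Xi_{\al,n}$ converges to a pure multiplication operator. Writing ${\mathcal D}_{\al,n}=\sum_{|I|=\al}c_n^I(\bz)\,D_I$ with $c_n^I(\bz)=v^{-\Lambda_{\al,\al}n/2}(z_I)^n a_I(\bz)$, I would work in a regime (e.g.\ $|z_1|>\cdots>|z_{r+1}|$ with $|v|>1$, as already used for the inverse series in Section \ref{demazure}) in which a single subset $I^*=\{1,\dots,\al\}$ strictly dominates as $n\to\infty$, so that ${\mathcal D}_{\al,n}=c_n^{I^*}D_{I^*}(1+o(1))$ in the relevant operator sense. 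The crucial point is that in the product ${\mathcal D}_{\al,n+1}^{-1}{\mathcal D}_{\al,n}$ the leading shifts $D_{I^*}^{-1}$ and $D_{I^*}$ cancel — because the $D_i$ are commuting diagonal rescalings — leaving multiplication by $(c_n^{I^*}/c_{n+1}^{I^*})(D_{I^*}^{-1}\bz)$. A routine check using $\Lambda_{\al,\al}=\al(r+1-\al)$ and $q=v^{-r-1}$ collapses all the $v$-powers and yields $\Xi_\al=(z_1z_2\cdots z_\al)^{-1}$, whence ${\mathcal Y}_{2\al-1}\to v^{r/2}\,\Xi_{\al-1}\Xi_\al^{-1}=v^{r/2}z_\al$, while ${\mathcal Y}_{2\al}\to 0$ because its factor $\Xi_{\al,n-1}^{-1}\Xi_{\al,n}-1\to 0$ and the remaining factors stay finite. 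The boundary cases are consistent: $\Xi_0=\Xi_{r+1}=1$ from ${\mathcal D}_{0,n}={\mathcal D}_{r+1,n}=1$ together with $z_1\cdots z_{r+1}=1$, so that ${\mathcal Y}_{2r+1}\to v^{r/2}z_{r+1}$.

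Finally I would conclude exactly as in Lemma \ref{limlem}: in the limit the even weights vanish, so in the hard-particle sum \eqref{consdem} only the all-odd configurations survive; for odd indices the hard-particle constraint $i_\ell<i_{\ell+1}-1$ is automatic, and the surviving weights ${\mathcal Y}_{2\al-1}\to v^{r/2}z_\al$ are commuting multiplication operators, so the ordering in each product is irrelevant. Hence ${\mathcal C}_m$ reduces to the $m$-th elementary symmetric function $e_m(v^{r/2}z_1,\dots,v^{r/2}z_{r+1})=v^{mr/2}e_m(\bz)$, which is the assertion. Since ${\mathcal C}_m$ is a fixed operator independent of $n$ and of the analytic regime, and the output is symmetric in the $z_i$, the identity of operators then holds in general.

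The hard part will be the second step: justifying rigorously that $\lim_{n\to\infty}\Xi_{\al,n}$ exists and is a pure multiplication operator. Controlling the inverse ${\mathcal D}_{\al,n+1}^{-1}$ — an infinite series of difference operators — and showing that every subdominant subset $I\neq I^*$ contributes terms suppressed by powers of $(z_I/z_{I^*})^n$, so that only the leading shift survives and cancels against its inverse, is the delicate point. Everything else is the combinatorial bookkeeping of Lemma \ref{limlem} transported through $*$ and $\pi$, together with the elementary exponent arithmetic.
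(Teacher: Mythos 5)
Your proposal is correct and follows essentially the same route as the paper's proof: evaluate the conserved quantity at $n\to\infty$ assuming strictly ordered moduli $|z_1|>\cdots>|z_{r+1}|$, note that ${\mathcal D}_{\al,n}$ is dominated by the subset $I_\al=\{1,\dots,\al\}$ so that the shifts cancel in ${\mathcal D}_{\al,n+1}^{-1}{\mathcal D}_{\al,n}\to v^{-\Lambda_{\al,\al}/2}(z_1\cdots z_\al)^{-1}$, whence ${\mathcal Y}_{2\al-1}\to v^{r/2}z_\al$, ${\mathcal Y}_{2\al}\to 0$, and the hard-particle sum collapses to $v^{mr/2}e_m(\bz)$. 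The only cosmetic difference is that you transport Lemma \ref{limlem}'s $\xi$-formulation through $*$ and $\pi$, whereas the paper works directly with the ${\mathcal D}_{\al,n}$; the asymptotic analysis and conclusion are identical, and you correctly identify the same delicate point (control of the inverse operator series) that the paper also treats informally.
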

\begin{proof}
We will compute the action of ${\mathcal C}_m$ expressed as \eqref{consdem} in the limit when $n\to \infty$.
We must estimate the operator
${\mathcal D}_{\al,n}$ when $n$ becomes large. To this end, and without loss of generality, 
let us assume the modules of the $z_i$'s are strictly ordered,
say $|z_1|>|z_2|>\cdots >|z_{r+1}|>0$. 
Then for large $n$ the expression for ${\mathcal D}_{\al,n}$ is dominated
by the contribution of the subset $I_\al=\{1,2,...,\al\}$, and we have
$${\mathcal D}_{\al,n}\sim v^{-\frac{\Lambda_{\al,\al}}{2}n-\sum_\beta \Lambda_{\al,\beta}}
(z_{I_\al})^n a_{I_\al}(\bz) D_{I_\al}\quad {\rm hence}\quad  
\lim_{n\to \infty} {\mathcal D}_{\al,n+1}^{-1}{\mathcal D}_{\al,n}= v^{-\frac{\Lambda_{\al,\al}}{2}} z_{I_\al}^{-1}$$
This gives
\begin{eqnarray*}
\lim_{n\to \infty} {\mathcal Y}_{2\al-1}(n)&=&v^{\frac{r}{2}}z_\al \qquad (\al=1,2,...,r+1)\\
\lim_{n\to \infty} {\mathcal Y}_{2\al}(n)&=&0\qquad (\al=1,2,...,r)
\end{eqnarray*}
as the latter is proportional to $(z_{\al+1}/z_\al)^n\to 0$ when $n\to \infty$. 
As before, the hard particle model reduces to that on the odd vertices of 
${\mathcal G}_r$ which are not connected by edges, hence the partition functions are simply the elementary symmetric functions
of the variables $v^{\frac{r}{2}}z_\al$, $\al=1,2...,r+1$ and the theorem follows.
\end{proof}

We are now ready to prove Theorem \ref{raisingthm} in the case of level $k=1$. 
We will show that the function \eqref{Gfound} for $k=1$ satisfies the {\it same} difference equation
\eqref{diffeqkone}
as in Theorem \ref{difconeN} and its higher $m$ versions. 

First,
we may identify the action of the conserved quantity $C_m$ on the function $\tau(\bz)$ 
within the constant term evaluation of Corollary \ref{conscor}
with that of the conserved quantity ${\mathcal C}_m$ on functions of $\bz$ of Theorem \ref{actdem} above: in both cases, the action is by multiplication by $v^{\frac{mr}{2}}e_m(\bz)$. This involves writing the conserved quantity at $n\to\infty$ in both cases.

Second, if we use the expression of the conserved quantity $C_m$ (resp. ${\mathcal C}_m$) as a function of 
$\Q_{\al,0},\Q_{\al,1}$ 
(resp. ${\mathcal D}_{\al,0},{\mathcal D}_{\al,1}$), we obtain the exact {\it same} combinations of shift operators.

This shows that the difference equations obeyed by \eqref{defGkn} and \eqref{Gfound} at level $k=1$ are identical. 
To complete the analysis, we should in principle examine the initial conditions. 
We have seen that $G_\bn^{(1)}=0$ as soon as any of the $n^{(\al)}$
are equal to $-1$. Let us now show that these conditions are not necessary to fix the solution, as each such term
comes with a vanishing prefactor, and therefore drops out of the difference equation.

This fact relies on an important result of Ref. \cite{qKR}, which was instrumental in proving the polynomiality property for the associated quantum cluster algebra. It relies on the Laurent polynomiality property which asserts that any
cluster variable may be expressed as a Laurent polynomial of any seed variables. The following Lemma was derived by combining the Laurent property of the quantum cluster algebra
for initial data ${\mathcal S}_0=\{ \Q_{\al,0},\Q_{\al,1}\}$ as well as for initial data 
${\mathcal S}_{-1}=\{ \Q_{\al,-1},\Q_{\al,0}\}$.

\begin{lemma}{(\cite{qKR}, Lemma 5.9 and its proof.)}\label{uniqp}
For any polynomial $p$ of the $\{\Q_{\al,i}\}$ with coefficients in $\Z_v$, there exists a unique expression of the form:
\begin{equation}\label{formp} p=\sum_{A\cup B=[1,r];A\cap B=\emptyset; m_\al(A,B)\geq 0} \left(\prod_{\al\in A} \Q_{\al,-1}^{m_\al(A,B)}\right) \,
c^{A,B}_\bm (\{\Q_{\gamma,0}\})\, 
\left(\prod_{\beta\in B} \Q_{\beta,1}^{m_\beta(A,B)}\right)
\end{equation}
where the coefficients $c^{A,B}_\bm$ are Laurent polynomials of the variables $\{Q_{\gamma,0}\}$.
\end{lemma}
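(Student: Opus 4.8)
The plan is to read \eqref{formp} as the description of the intersection of two quantum tori, and to establish existence and uniqueness separately, with the single exchange relation of the quantum $Q$-system at $k=0$ serving as the bridge between the two tori. Set $\mathcal{O}=\Zv[\{\Q_{\gamma,0}^{\pm1}\}]$, the quantum torus generated by the $q$-commuting variables $\Q_{\gamma,0}$, and put $R_\al=\Q_{\al,0}^2-\Q_{\al+1,0}\Q_{\al-1,0}\in\mathcal{O}$. Relation \eqref{qQsys} at $k=0$ reads $\Q_{\al,1}\Q_{\al,-1}=v^{-\Lambda_{\al,\al}}R_\al$, so $\Q_{\al,-1}=v^{-\Lambda_{\al,\al}}\Q_{\al,1}^{-1}R_\al$. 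Among the variables $\{\Q_{\gamma,0},\Q_{\al,\pm1}\}$, the law \eqref{commualbet} makes every pair $q$-commute except the pairs $(\Q_{\al,1},\Q_{\al,-1})$ with equal $\al$, where $|1-(-1)|=2>|\al-\al|+1$; this single failure is exactly what forbids $\Q_{\al,-1}$ and $\Q_{\al,1}$ from occurring together and produces the constraint $A\cap B=\emptyset$. By the Laurent property (Lemma \ref{lopo}) applied to the two adjacent clusters $\mathcal{S}_0=\{\Q_{\al,0},\Q_{\al,1}\}$ and $\mathcal{S}_{-1}=\{\Q_{\al,-1},\Q_{\al,0}\}$, the polynomial $p$ lies in both tori $T_0=\mathcal{O}[\{\Q_{\al,1}^{\pm1}\}]$ and $T_{-1}=\mathcal{O}[\{\Q_{\al,-1}^{\pm1}\}]$, both embedded in the localization $\widetilde{T}=T_0[\{R_\al^{-1}\}]$; then \eqref{formp} is precisely the assertion $T_0\cap T_{-1}=\bigoplus_{A\sqcup B=[1,r],\,\bm}\big(\prod_{\al\in A}\Q_{\al,-1}^{m_\al}\big)\,\mathcal{O}\,\big(\prod_{\beta\in B}\Q_{\beta,1}^{m_\beta}\big)$.

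For uniqueness I would substitute $\Q_{\al,-1}=v^{-\Lambda_{\al,\al}}\Q_{\al,1}^{-1}R_\al$ to move a vanishing combination of right-hand-side terms into $\widetilde{T}$, which I grade by the $\Z^r$-valued $\Q_{\cdot,1}$-degree with $\mathcal{O}[\{R_\al^{-1}\}]$ in degree $0$. Writing $\Q^{\be}:=\prod_\al\Q_{\al,1}^{e_\al}$, the term indexed by $(A,B,\bm)$ is homogeneous of degree $\be$ with $e_\al=-m_\al$ on $A$, $e_\beta=+m_\beta$ on $B$ and $0$ elsewhere, so $(A,B,\bm)\mapsto\be$ is injective. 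Since each graded component is free of rank one over $\mathcal{O}[\{R_\al^{-1}\}]$, collecting the degree-$\be$ part shows its coefficient equals $c^{A,B}_\bm$ times a nonzero, reordered power of the $R_\al$ (times a power of $v$); as $\cA$ is a domain this forces every $c^{A,B}_\bm=0$.

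For existence I would start from the $T_0$-expansion $p=\sum_{\be}c_{\be}\,\Q^{\be}$ with $c_\be\in\mathcal{O}$. The monomials with all $e_\al\ge0$ already have the shape required for the families with $A=\emptyset$. For a negative entry $e_\al=-m_\al$, the exchange relation rewrites $\Q_{\al,1}^{-m_\al}$ as a power of $\Q_{\al,-1}$ divided by a reordered power of $R_\al$, producing exactly the mixed monomials of \eqref{formp}, but with coefficients a priori only in the localization at the $R_\al$. The entire content of the lemma is that these denominators disappear, and this is where the second Laurent expansion enters: since $p$ also lies in $T_{-1}$, the coefficient of each $\Q^{\be}$ with negative entries must be divisible in $\mathcal{O}$ by the corresponding power of the $R_\al$, so after conversion the resulting $c^{A,B}_\bm$ are genuine Laurent polynomials in the $\Q_{\gamma,0}$. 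Grouping the converted monomials by the sign pattern of $\be$ then yields \eqref{formp}. It is cleanest to prove this intersection identity one index $\al$ at a time, by induction on $r$, absorbing the remaining variables into the coefficient domain and using that distinct pairs $\Q_{\al,\pm1}$ pairwise $q$-commute, so the rank-one direction in each $\al$ can be analyzed in isolation.

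The main obstacle is precisely this divisibility-cancellation, compounded by noncommutative bookkeeping: the element $R_\al$ is not normal in $\mathcal{O}$, since one checks from \eqref{lambdadef} that $2\Lambda_{\al,\al}-\Lambda_{\al+1,\al}-\Lambda_{\al-1,\al}=r+1\neq0$, so the two monomials of $R_\al$ acquire different powers of $v$ when transported past $\Q_{\al,\pm1}$. Consequently ``divisibility by a power of $R_\al$'' and the reordering of $R_\al$ through $\Q_{\al,1}^{-1}$ must be carried out as exact identities in the quantum torus, tracking explicit powers of $v$; the basic fact to control is that $\Q_{\al,1}^{m}\Q_{\al,-1}^{m}$ is a specific nonzero, reordered power of $R_\al$ lying in $\mathcal{O}$ (proved by induction, peeling off one central factor $\Q_{\al,1}\Q_{\al,-1}=v^{-\Lambda_{\al,\al}}R_\al$ at a time). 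Once the single-$\al$ identity $T_0\cap T_{-1}=\mathcal{O}\oplus\bigoplus_{m\ge1}\mathcal{O}\,\Q_{\al,1}^{m}\oplus\bigoplus_{m\ge1}\mathcal{O}\,\Q_{\al,-1}^{m}$ is established over the noncommutative coefficient domain, the general statement follows by iterating over $\al$.
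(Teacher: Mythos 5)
Your skeleton — embed everything in a common localized torus, use the quantum Laurent property for both clusters $\mathcal{S}_0$ and $\mathcal{S}_{-1}$, grade by $\Q_{\cdot,1}$-multidegree, and exploit that the quantum torus is a domain — is the route the paper itself indicates (the paper gives no proof: it cites \cite{qKR}, Lemma 5.9, with exactly this one-line description). Your uniqueness argument is sound: distinct triples $(A,B,\bm)$ give distinct multidegrees (up to the harmless convention for indices with $m_\al=0$), each graded piece is free of rank one, and the domain property kills the coefficients.

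The existence half, however, has a genuine gap at the step you assert rather than prove. Decompose $p$ into homogeneous components $p_{\mathbf e}$ and set $A=\{\al:e_\al<0\}$, $B=\{\beta:e_\beta>0\}$. When $B=\emptyset$ the $T_{-1}$-expansion already gives $p_{\mathbf e}$ in the required shape, and when $A=\emptyset$ the $T_0$-expansion does; no divisibility is needed for pure sign patterns, and these are the only cases your ``single-$\al$ identity'' ever produces. For a mixed pattern ($A,B\neq\emptyset$) neither expansion has the required shape: $T_0$ gives $p_{\mathbf e}=c\prod_{\al\in A}\Q_{\al,1}^{e_\al}\prod_{\beta\in B}\Q_{\beta,1}^{e_\beta}$ with negative powers on the $A$-side, while $T_{-1}$ gives $p_{\mathbf e}=d\prod_{\al\in A}\Q_{\al,-1}^{|e_\al|}\prod_{\beta\in B}\Q_{\beta,-1}^{-e_\beta}$ with negative powers on the $B$-side. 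Equating the two and clearing the factors $\Q_{\beta,-1}^{-e_\beta}$ via \eqref{qQsys} produces an identity in $\mathcal{O}$ of the form $c\,\sigma=v^{\bullet}d\,\rho$, where $\sigma$ is a product of twisted factors $R_\beta$, $\beta\in B$, and $\rho$ a product of twisted factors $R_\al$, $\al\in A$. What you need is $c\in\mathcal{O}\rho'$; what you have is $c\,\sigma\in\mathcal{O}\rho$. Cancelling $\sigma$ is a coprimality statement for the non-normal binomials $R_\gamma$ inside the noncommutative quantum torus, and this — not the reordering bookkeeping you discuss — is the actual content of the lemma; the sentence ``the coefficient \dots must be divisible in $\mathcal{O}$ by the corresponding power of the $R_\al$'' is precisely what is unproved. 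Moreover the proposed induction ``one index $\al$ at a time'' does not close: after peeling off one index, the coefficient of a positive-degree part is known to lie in $\mathcal{O}[\Q_{\gamma,1}^{\pm1}]$ but only in a localization (at twisted $R$'s) of $\mathcal{O}[\Q_{\gamma,-1}^{\pm1}]$, so the inductive hypothesis no longer applies to it.

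The gap can be closed without any divisibility argument, by using more clusters than $\mathcal{S}_0$ and $\mathcal{S}_{-1}$: for each sign pattern $(A,B)$ the set $\{\Q_{\al,-1},\Q_{\al,0}:\al\in A\}\cup\{\Q_{\beta,0},\Q_{\beta,1}:\beta\in B\}$ is itself a genuine cluster of the quantum cluster algebra — it is reached from $\mathcal{S}_0$ by mutating at the vertices of $A$, and is one of the ``Motzkin-path'' clusters of \cite{DFKnoncom,qKR} (the path $m_\al=-1$ on $A$, $m_\beta=0$ on $B$ satisfies $|m_\al-m_{\al+1}|\leq1$). The quantum Laurent phenomenon for this mixed cluster says directly that the component $p_{\mathbf e}$ with sign pattern $(A,B)$ equals $\prod_{\al\in A}\Q_{\al,-1}^{|e_\al|}\,c^{A,B}_{\bm}\,\prod_{\beta\in B}\Q_{\beta,1}^{e_\beta}$ with $c^{A,B}_{\bm}$ a Laurent polynomial of the $\Q_{\gamma,0}$, which is exactly \eqref{formp}. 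In short: the correct mechanism is one Laurent expansion per sign pattern, not two Laurent expansions plus cancellation.
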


In other words, any occurrence of $\Q_{\al,1}^{-1}$ in the Laurent polynomial expression of 
$p$ may be replaced by a term $\Q_{\al,-1}$, for which coefficients remain Laurent polynomials of the variables $\{\Q_{\gamma,0}\}$. This powerful property can be applied to the conserved quantities as well. Indeed, each quantity $C_m$ of \eqref{conservedm}
is a Laurent polynomial of the initial data
${\mathcal S}_0$ as well as of ${\mathcal S}_{-1}$ depending on whether it is expressed at $n=0$ or $n=-1$. 
Repeating the argument leading to Lemma \ref{uniqp}, we also find that each $C_m$ may be expressed in a 
unique way in the form \eqref{formp}. Let us examine the expression of $C_m$ as a Laurent polynomial 
of the initial data ${\mathcal S}_0$ more closely. From the hard particle condition and the explicit form of 
$y_i(0)$ (\ref{yodd}-\ref{yeven}), 
we see that the terms  containing negative powers of $\Q_{\al,1}$ in $C_m$ must be of the form 
$c_{A,B}(\{\Q_{\gamma,0}\})\Big( \prod_{\al \in A} \Q_{\al,1}^{-1}\Big)\Big(\prod_{\beta \in B} \Q_{\beta,1}\Big)$, for some disjoint subsets
$A,B\subset [1,r]$,
as each particle is exclusive of its neighbors on the graph. Such
terms may be rewritten as $\Big(\prod_{\al \in A} \Q_{\al,-1}\Big)c_{A,B}'(\{\Q_{\gamma,0}\}) 
\Big(\prod_{\beta \in B} \Q_{\beta,1}\Big)$
according to the above. Now consider the level $1$ quantity
$G_\bn^{(1)}=\phi\left(\Big(\prod_\beta \Q_{\beta,1}\Big)\Big( \prod_\al \Q_{\al,1}^{n^{(\al)}}\Big) \tau(z)\right)$
and insert $C_m$ as before. We get:
$$v^{\frac{mr}{2}}e_m(\bz)G_\bn^{(1)}= \phi\left(\left(\prod_{\beta=1}^r \Q_{\beta,1}\right)
\left(\prod_{\al=1}^r \Q_{\al,1}^{n^{(\al)}}\right)C_m\, \tau(z)\right)$$
Suppose some $n^{(\al)}=0$. The insertion of $C_m$, expressed in terms of ${\mathcal S}_0$ variables, will introduce terms of the form  $G_\bn^{(1)}$ with $n^{(\al)}=-1$, whenever $\Q_{\al,1}^{-1}$ occurs in $C_m$. These are precisely the unwanted terms, for which we showed that $G_\bn^{(1)}=0$. However, we need not impose this condition. Indeed, by the above argument we may replace the terms with $\Q_{\al,1}^{-1}$ in $C_m$ with $\Q_{\al,-1}$, up to a change of coefficient $c_{A,B}\to c_{A,B}'$. This gives a contribution of the form:
\begin{eqnarray*}&&\phi\left(\Big(\prod_\beta \Q_{\beta,1}\Big)\Big( \prod_{\gamma\neq\al} 
\Q_{\gamma,1}^{n^{(\gamma)}}\Big)\Q_{\al,-1}p\, \tau(z)\right)\\
&&\qquad \qquad =v^{-2\sum_{\gamma\neq \al} \Lambda_{\al,\gamma}n^{(\gamma)}}
\phi\left(\Big(\prod_\beta \Q_{\beta,1}\Big)\Q_{\al,-1} \Big(\prod_{\gamma\neq\al} 
\Q_{\gamma,1}^{n^{(\gamma)}}\Big)p\, \tau(z)\right)=0
\end{eqnarray*}
by first using the commutation relation \eqref{commualbet}, and then noting that $\Q_{\al,1}\Q_{\al,-1}=v^{-\Lambda_{\al,\al}}(\Q_{\al,0}^2-\Q_{\al+1,0}\Q_{\al-1,0})$ causes the evaluation to vanish. 
Hence the terms which would have created $G_\bn^{(1)}$ with $n^{(\al)}=-1$ 
drop from the equation. 

This phenomenon is examplified in the expressions of Examples \ref{oneexG} and \ref{twoexG}
showing the difference equations for respectively ${\mathfrak sl}_2$ \eqref{levonG}
(where the coefficient of the unwanted term vanishes for $n=0$), and for ${\mathfrak sl}_3$ 
(\ref{levonethreeone}-\ref{levonethreetwo}) (where the coefficients of the unwanted terms vanish when $n=0$ 
or $p=0$).

The same holds for the difference equations satisfied by \eqref{Gfound} at $k=1$. To prove it, we repeat the above argument, and note that unwanted terms from ${\mathcal C}_m$ take the form 
$$\pi(p^*)\left(\prod_{\gamma\neq\al} {\mathcal D}_{\gamma,1}^{n^{(\gamma)}}\right)
{\mathcal D}_{\al,-1}\, 1=0$$
for some polynomials $p^*$ of the $\{\Q_{\al,i}^*\}$. This is
due to the fact that ${\mathcal D}_{\al,-1}\, 1$=0. 
This latter property is a consequence of the following lemma, proved in Appendix 
B below, and of its immediate corollary.

\begin{lemma}\label{lemvan}
For any $\al\in [1,r]$, we have the following identity:
\begin{equation}
\sum_{I\subset [1,r+1]\atop |I|=\al} (z_I)^p a_{I}(\bz) =\left\{ \begin{matrix} 1 & {\rm if}\ p=0 \\
0 & {\rm for} \ p=-1,-2,...,\al-r-1\, .\end{matrix}\right. 
\end{equation}
\end{lemma}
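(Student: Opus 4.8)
The plan is to recognize the left-hand side as a bialternant, i.e.\ a single $N\times N$ determinant divided by the Vandermonde, where $N=r+1$. First I would record that each index $i\in I$ contributes the factor $z_i$ once for every $j\in\bar I$, so that
\[
(z_I)^p\,a_I(\bz)=\prod_{i\in I}z_i^{\,m}\prod_{i\in I,\ j\in\bar I}\frac{1}{z_i-z_j},\qquad m:=r+1-\al+p,
\]
using $|\bar I|=r+1-\al$. Thus the whole sum becomes $\sum_{|I|=\al}\big(\prod_{i\in I}z_i^m\big)\prod_{i\in I,\,j\in\bar I}(z_i-z_j)^{-1}$, and the goal reduces to showing this equals $1$ for $p=0$ and $0$ for $p=-1,\dots,\al-r-1$.

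The two ingredients I would assemble are a Vandermonde split and a Laplace expansion. Writing $\Delta(\bz)=\prod_{i<j}(z_i-z_j)$ and splitting its factors according to whether both indices lie in $I$, both in $\bar I$, or one in each, gives
\[
\prod_{i\in I,\ j\in\bar I}\frac{1}{z_i-z_j}=(-1)^{K}\,\frac{\Delta(z_I)\,\Delta(z_{\bar I})}{\Delta(\bz)},\qquad K:=\#\{(i,j):i\in I,\ j\in\bar I,\ i>j\}.
\]
Next I would introduce the $N\times N$ matrix $M$ whose rows are indexed by $z_1,\dots,z_{r+1}$, whose first $\al$ columns carry the exponents $m+\al-1,\dots,m$ and whose last $\beta:=r+1-\al$ columns carry the exponents $\beta-1,\dots,0$. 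The Laplace expansion of $\det M$ along the first block of columns is a sum over $\al$-subsets $I$ of rows, each term being $(-1)^{K}\det(z_{i}^{m+\al-b})_{i\in I}\,\det(z_j^{\beta-d})_{j\in\bar I}=(-1)^{K}\big(\prod_{i\in I}z_i^m\big)\Delta(z_I)\Delta(z_{\bar I})$, the Laplace row-selection sign being $(-1)^{\sum_a i_a-\sum_a a}=(-1)^{K}$. Matching this against the previous display yields the clean identity $\sum_{|I|=\al}(z_I)^p a_I(\bz)=\det M/\Delta(\bz)$, with no leftover sign.

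It then remains only to evaluate the determinant. For $p=0$ one has $m=\beta$, so the combined set of column exponents is $\{\beta,\dots,N-1\}\cup\{0,\dots,\beta-1\}=\{0,1,\dots,N-1\}$; hence $M$ is the Vandermonde matrix and $\det M/\Delta(\bz)=1$. For $p\in\{-1,\dots,\al-r-1\}$ one has $m=\beta+p\in\{0,\dots,\beta-1\}$, so the smallest exponent $m$ of the first block coincides with one of the exponents $0,\dots,\beta-1$ of the second block; $M$ then has two equal columns, so $\det M=0$ and the sum vanishes. Note the restriction $p\ge\al-r-1=-\beta$ is exactly what forces $m\ge0$, keeping all exponents nonnegative so that the ``repeated column'' argument applies verbatim. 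The one delicate point, and the main thing to get right, is the sign bookkeeping in the two middle steps: I would verify carefully that the inversion count $K$ produced by the Vandermonde split agrees with the sign of the Laplace row-selection, which is precisely what makes the normalization come out to $+1$ rather than $\pm1$.
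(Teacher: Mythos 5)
Your proof is correct, and the sign bookkeeping you flag as the delicate point does check out: for $I=\{i_1<\cdots<i_\al\}$ one has $K=\sum_a\#\{j\in\bar I:\ j<i_a\}=\sum_a(i_a-a)$, which is exactly the Laplace row-selection sign, so the identity $\sum_{|I|=\al}(z_I)^p a_I(\bz)=\det M/\Delta(\bz)$ holds with no stray sign. Your route differs from the paper's in the key evaluation step. The paper also clears denominators by multiplying by $\Delta(\bz)$, after symmetrizing over the base subset $I_\al=[1,\al]$; its antisymmetrized expression $AS\bigl((z_{I_\al})^{p+N-\al}\Delta(z_1,\dots,z_\al)\Delta(z_{\al+1},\dots,z_N)\bigr)$, with $N=r+1$, is up to constant factors your $\det M$ in disguise (the antisymmetrization over $S_N$ of a product of two antisymmetric blocks reproduces precisely the Laplace expansion), but the paper never identifies a determinant. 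Instead it counts degrees: the polynomial being antisymmetrized has total degree $\al p+N(N-1)/2$, so for $p=-1,\dots,\al-N$ it is killed by the fact that antisymmetrizing any polynomial of degree strictly below $N(N-1)/2$ gives zero (Corollary \ref{ascor}), while for $p=0$ the result is proportional to $\Delta(\bz)$ and the constant is fixed to $1$ by letting $z_1,\dots,z_\al\to\infty$ successively. Your bialternant identification buys more: it dispenses with both the degree lemma and the limiting normalization argument (the $p=0$ case is literally the Vandermonde determinant, and each admissible negative $p$ produces a repeated column), and it evaluates the sum for every $p\geq\al-r-1$ at once, exhibiting it as the Schur polynomial $s_{(p^\al)}(\bz)$ when $p\geq 0$. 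What the paper's degree-counting buys in exchange is freedom from any sign analysis, which is also why the same technique recycles directly in its residue-based proofs of Lemmas \ref{firstlemma} and \ref{secondlemma}, where an explicit determinant identification would be harder to come by.
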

This implies immediately the following:
\begin{cor}\label{corus}
We have ${\mathcal D}_{\al,-p}\, 1=0$ for all $p=1,2,...,r+1-\al$, and
${\mathcal D}_{\al,0}\, 1=v^{-\sum_\beta \Lambda_{\al,\beta}}$.
\end{cor}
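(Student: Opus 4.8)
The plan is to observe first that Corollary \ref{corus} is an immediate consequence of Lemma \ref{lemvan}, so the whole content lies in the Lemma. Since each $D_i$ merely rescales the variables, we have $D_I\cdot 1=1$, and hence by \eqref{qan}
${\mathcal D}_{\al,n}\cdot 1=v^{-\frac{\Lambda_{\al,\al}}{2}n-\sum_\beta\Lambda_{\al,\beta}}\,\sum_{|I|=\al}(z_I)^n a_I(\bz)$. Putting $n=0$ and invoking Lemma \ref{lemvan} gives ${\mathcal D}_{\al,0}\cdot 1=v^{-\sum_\beta\Lambda_{\al,\beta}}$, while for $n=-p$ with $1\le p\le r+1-\al$ (so that $-p\in\{-1,\dots,\al-r-1\}$) the sum vanishes, giving ${\mathcal D}_{\al,-p}\cdot 1=0$. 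So I am reduced to the Lemma; write $N=r+1$ and abbreviate $S_p:=\sum_{|I|=\al}(z_I)^p a_I(\bz)$, with $z_I,a_I$ as in \eqref{productnotation}.

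The key structural step is to show that $S_p$ is a \emph{symmetric Laurent polynomial}, i.e. that it has no poles along the diagonals $z_i=z_j$. By symmetry it is enough to treat $z_1=z_2$. Only subsets $I$ containing exactly one of $1,2$ produce a factor $(z_1-z_2)^{-1}$, so I pair each such $I$ (say $1\in I$, $2\notin I$) with $I'=(I\setminus\{1\})\cup\{2\}$. As $z_1\to z_2$ one has $z_I^p\to z_{I'}^p$, the non-singular factors of $a_I$ and $a_{I'}$ coincide, and the singular factors combine as $\frac{z_1}{z_1-z_2}+\frac{z_2}{z_2-z_1}=1$; thus each pair is regular at $z_1=z_2$, and summing over pairs shows $S_p$ is regular there, hence everywhere. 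Note also that $a_I$ is homogeneous of degree $0$, so $S_p$ is homogeneous of degree $\al p$.

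With regularity established, both the vanishing and the normalization follow from one–variable degree bounds. Fix $z_2,\dots,z_N$ generic and view $S_p$ as a Laurent polynomial in $z_1$. As $z_1\to\infty$, a term with $1\in I$ behaves like $z_1^{p}$ (its factors $\prod_{j\in\bar I}\frac{z_1}{z_1-z_j}\to1$) and a term with $1\notin I$ like $z_1^{-\al}$ (from $\prod_{i\in I}\frac{z_i}{z_i-z_1}$), so the top degree in $z_1$ is at most $\max(p,-\al)$. As $z_1\to0$, a term with $1\in I$ behaves like $z_1^{\,p+N-\al}$ (the $N-\al$ factors $\frac{z_1}{z_1-z_j}$ each vanish linearly) and a term with $1\notin I$ is bounded, so for $p\ge\al-N$ the bottom degree in $z_1$ is $\ge0$. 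For $\al-N\le p\le -1$ the top degree is $\max(p,-\al)<0$ while the bottom degree is $\ge0$, forcing $S_p\equiv0$. For $p=0$ both bounds pin the $z_1$–degree to $0$, so $S_0$ is independent of $z_1$, hence by symmetry constant; evaluating $\lim_{z_1\to\infty}S_0$ collapses it to the analogous size-$(\al-1)$ sum in the $N-1$ variables $z_2,\dots,z_N$, and descending induction down to the empty-subset value $1$ gives $S_0=1$. This proves Lemma \ref{lemvan}.

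I expect the only delicate point to be the pole-cancellation bookkeeping, namely verifying that the non-singular factors of $a_I$ and $a_{I'}$ genuinely agree in the limit $z_1\to z_2$; the degree counts are then routine. As an alternative that makes both cases transparent at once, one may represent $S_p$ as an $\al$-fold contour integral with a Vandermonde-squared kernel, collapse it via the Andr\'eief identity to the Hankel determinant $S_p=(-1)^{\binom{\al}{2}}\det_{1\le i,j\le\al}\big[h_{p-\al-1+i+j}(\bz)\big]$ (valid precisely for $p\ge\al-N$, where no residue at the origin arises), and note that for $p\le -1$ the first row vanishes identically, whereas for $p=0$ the matrix is anti-triangular with $h_0=1$ on the anti-diagonal, giving $S_0=1$.
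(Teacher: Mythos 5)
Your proof is correct, and the reduction of Corollary \ref{corus} to Lemma \ref{lemvan} --- via $D_I\,1=1$, so that ${\mathcal D}_{\al,n}\,1$ equals $v^{-\frac{\Lambda_{\al,\al}}{2}n-\sum_\beta \Lambda_{\al,\beta}}\sum_{|I|=\al}(z_I)^n a_I(\bz)$ --- is exactly the paper's (it states this implication as immediate). Where you genuinely diverge is in the proof of the Lemma itself, which the paper gives in Appendix B. There, the sum is multiplied by the Vandermonde $\Delta(\bz)$, which clears all denominators at once and rewrites it as the antisymmetrization of the polynomial $(z_{I_\al})^{p+N-\al}\Delta(z_1,\dots,z_\al)\Delta(z_{\al+1},\dots,z_N)$ (a polynomial precisely when $p\ge \al-N$, with $N=r+1$); vanishing for $\al-N\le p\le -1$ then follows from the global total-degree count of Corollary \ref{ascor}, and for $p=0$ the degree is exactly $N(N-1)/2$, so the result is a constant multiple of $\Delta(\bz)$, the constant being fixed by successive limits $z_1,\dots,z_\al\to\infty$. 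You never invoke the Appendix A symmetrization machinery (Lemma \ref{symlem}, Corollary \ref{ascor}): instead you prove directly that the sum is a symmetric Laurent polynomial by pairing $I\leftrightarrow (I\setminus\{1\})\cup\{2\}$ to cancel the simple pole at $z_1=z_2$ (your flagged ``delicate point'' does check out: the nonsingular factors of $a_I$ and $a_{I'}$ match up bijectively on the diagonal), and you replace the total-degree argument by one-variable degree bounds at $z_1\to\infty$ and $z_1\to 0$; your normalization at $p=0$ (one limit $z_1\to\infty$ plus descending induction on $(\al,N)$) is the paper's successive-limits step made explicit as an induction. The paper's route buys brevity, since Appendix A already supplies the antisymmetrization lemmas it reuses elsewhere; yours buys self-containedness and makes the Laurent-polynomiality of the sum itself visible, rather than only that of $\Delta(\bz)$ times the sum. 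Your closing alternative is also sound in outline and is a structural bonus absent from the paper: by Jacobi--Trudi your Hankel determinant identifies $\sum_{|I|=\al}(z_I)^p a_I(\bz)$ with the rectangular Schur function $s_{(p^\al)}(\bz)$ for all $p\ge \al-N$ (zero for $\al-N\le p\le -1$, equal to $1$ at $p=0$), though the Andr\'eief/contour-integral collapse is only sketched and would need to be written out to stand as a complete second proof.
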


The only initial data needed to feed the level $1$ difference equations is therefore $G_0^{(1)}=1$, and the solution is uniquely determined by the equations. The corresponding function \eqref{Gfound} for $\bn=0$ is also trivially equal to $1$, and Theorem \ref{raisingthm} follows in the level $1$ case.

\subsubsection{Proof for general level $k\geq 2$}

Let $V=\Z_v[\bz]^{S_{r+1}}$, the space of symmetric polynomials in $\bz$ with coefficients in $\Z_v$. 
Using the map $\phi$ of Definition \ref{phidef}, we construct the map $\Psi$ from $A_+$, the space of polynomials in $\Q_{\al,k}$ with coefficients in $\Z_v$, to $V$ as follows.
\begin{defn}\label{psidef}
For all $p\in \Z_v[\{Q_{\al,k}\vert \al\in [1,r],k\geq 1\}]$, we define:
\begin{equation}\label{defpsi}
\Psi(p):=\phi\left(\prod_{\beta=1}^r \Q_{\beta,1}\, p \,\tau(z)\right) 
\end{equation}
\end{defn}
In particular, this allows to rewrite \eqref{defGkn} as:
\begin{equation}G_{\bn}(q^{-1},\bz)=\Psi\left(\prod \Q_{\al,i}^{n_i^{(\al)}}\right)\end{equation}
and we have the normalization condition $\Psi(1)=1$.

Let $V_0$ denote the image of $A_+$ under $\Psi$. $V_0$ is a right module over $A_+^{\rm op}$ where 
the superscript $\rm op$ denotes the opposite multiplication, under the action:
$$\Q_{\al,k} \circ \Psi(p) = \Psi(p \Q_{\al,k})$$

\begin{thm}\label{repsthm} The operators ${\mathcal D}_{\al,k}$ act on $V_0$ by left-multiplication, 
and form a representation of the action of $A_+^{\rm op}$ on $V_0$, such that: 
$ \Q_{\al,k} \circ \Psi(p) ={\mathcal D}_{\al,k} \, \Psi(p) $.
\end{thm}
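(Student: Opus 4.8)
The plan is to reduce the whole statement to the single intertwining identity
$$\Psi(p\,\Q_{\al,k})={\mathcal D}_{\al,k}\,\Psi(p)\qquad(\al\in[1,r],\ k\geq 1),$$
valid for every $p\in A_+$. Once this is known it yields all three assertions at once: the right-hand side depends on $p$ only through $\Psi(p)$, so the rule $\Q_{\al,k}\circ\Psi(p):=\Psi(p\,\Q_{\al,k})$ is well defined on $V_0$ (if $\Psi(p)=\Psi(p')$ then both sides agree); it shows ${\mathcal D}_{\al,k}$ preserves $V_0$ and acts there by left multiplication; and since the $\circ$-action makes $V_0$ a right $A_+^{\rm op}$-module, the identity exhibits the ${\mathcal D}_{\al,k}$ as realizing that module structure. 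Both sides being $\Z_v$-linear in $p$, it suffices to treat monomials $p$.

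The core is an induction on $k$ resting on the compatibility of the two quadratic relations. The clean inductive step is: assuming the identity for appended generators of levels $k-1$ and $k$ (for arbitrary polynomials), prove it for level $k+1$. Multiplying the target on the right by $\Q_{\al,k-1}$ and invoking the $Q$-system relation $v^{\Lambda_{\al,\al}}\Q_{\al,k+1}\Q_{\al,k-1}=\Q_{\al,k}^2-\Q_{\al+1,k}\Q_{\al-1,k}$, the two hypotheses convert $\Psi(p\,\Q_{\al,k}\Q_{\al,k})$ and $\Psi(p\,\Q_{\al+1,k}\Q_{\al-1,k})$ into $({\mathcal D}_{\al,k})^2\Psi(p)$ and ${\mathcal D}_{\al-1,k}{\mathcal D}_{\al+1,k}\Psi(p)$, while $\Psi(p\,\Q_{\al,k+1}\Q_{\al,k-1})={\mathcal D}_{\al,k-1}\Psi(p\,\Q_{\al,k+1})$. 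Since ${\mathcal D}_{\al+1,k}$ and ${\mathcal D}_{\al-1,k}$ commute, and the image under $\pi$ of the defining relation \eqref{dualqQsys} reads $v^{\Lambda_{\al,\al}}{\mathcal D}_{\al,k-1}{\mathcal D}_{\al,k+1}=({\mathcal D}_{\al,k})^2-{\mathcal D}_{\al+1,k}{\mathcal D}_{\al-1,k}$ --- crucially with the \emph{same} cross-term coefficient as the $Q$-system, because $*$ is precisely the anti-homomorphism matching these two relations --- everything collapses to
$${\mathcal D}_{\al,k-1}\,\Psi(p\,\Q_{\al,k+1})={\mathcal D}_{\al,k-1}\,{\mathcal D}_{\al,k+1}\,\Psi(p).$$
Cancelling the left factor ${\mathcal D}_{\al,k-1}$, which is left-invertible via the explicit inverse series displayed above, gives the identity at level $k+1$.

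This step requires $k-1\geq 1$, so the induction must be seeded with the identity at levels $k=1$ and $k=2$ for \emph{arbitrary} $p$. The level-1 result of Section~\ref{leveoneproof} establishes the identity only when $p$ is a monomial in the level-1 generators; promoting it to arbitrary $p$, and securing the level-2 base case, is where the genuine work lies. Here I would analyze directly how $\Q_{\al,1}$ and $\Q_{\al,2}$, inserted to the left of $\tau(\bz)$, interact with $\phi=CT\circ ev$, using the normal ordering and commutation relations \eqref{commualbet}, the equivalence of the two evaluations (Lemma \ref{equivev}), and the polynomiality property (Lemma \ref{poly}). The vanishing of the unwanted boundary contributions is controlled exactly as in the level-1 analysis, by Lemma \ref{uniqp} and Corollary \ref{corus} (the factor $\Q_{\al,1}\Q_{\al,-1}=v^{-\Lambda_{\al,\al}}(\Q_{\al,0}^2-\Q_{\al+1,0}\Q_{\al-1,0})$ forcing the evaluation to vanish, together with ${\mathcal D}_{\al,-p}\,1=0$).

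I expect the base case to be the main obstacle. The boundary level $k=0$ is genuinely exceptional: the naive identity $\Psi(p\,\Q_{\al,0})={\mathcal D}_{\al,0}\Psi(p)$ fails, since inside $\phi$ the generator $\Q_{\al,0}$ is normal-ordered and set to a scalar, whereas ${\mathcal D}_{\al,0}$ acts nontrivially on $\bz$. Consequently one cannot start the recursion from the initial cluster $\{\Q_{\al,0},\Q_{\al,1}\}$, and must instead establish levels $1$ and $2$ by hand before the clean algebraic recursion of the second paragraph takes over. The remaining care is bookkeeping: checking that the left-cancellation is legitimate on the space $V$ of symmetric polynomials, and that the convergence conventions for the inverse series (the cases $|v|>1$ and $|v|<1$) are respected throughout.
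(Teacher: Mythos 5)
Your inductive step is algebraically sound: applying $\Psi$ to $p$ times the relation $v^{\Lambda_{\al,\al}}\Q_{\al,k+1}\Q_{\al,k-1}=\Q_{\al,k}^2-\Q_{\al+1,k}\Q_{\al-1,k}$, invoking the two hypotheses, matching against the image of \eqref{dualqQsys} under $\pi$, and cancelling the left-invertible factor ${\mathcal D}_{\al,k-1}$ does produce the identity at level $k+1$. The genuine gap is in the base cases, and it is structural, not just a matter of unfinished computation. Because your step applies the level-$(k-1)$ hypothesis to the polynomial $p\,\Q_{\al,k+1}$, the hypotheses — and hence the base cases at $k=1,2$ — must hold for \emph{arbitrary} $p\in A_+$, in particular for $p$ containing generators of every level, not merely levels $1$ and $2$. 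So your ``base case'' is not a smaller statement: it already requires controlling arbitrary high-level generators inside $\Psi$, which is the full difficulty of the theorem. The proposal only gestures at this (``analyze directly how $\Q_{\al,1}$ and $\Q_{\al,2}$ interact with $\phi$''), so as written the induction never gets off the ground.

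The paper closes exactly this gap with a non-inductive argument, uniform in the level. It proves the single statement $\Psi(p)=\pi(p^*)\,1$ for all $p\in A_+$ at once: by Lemma \ref{uniqp}, any $p$ (involving generators of any levels) has a unique expression $\sum\left(\prod_{\al\in A}\Q_{\al,-1}^{m_\al}\right)c^{A,B}_{\bm}(\{\Q_{\gamma,0}\})\left(\prod_{\beta\in B}\Q_{\beta,1}^{m_\beta}\right)$; the terms with $A\neq\emptyset$ are annihilated on both sides (on the $\Psi$ side because $\Q_{\al,1}\Q_{\al,-1}=v^{-\Lambda_{\al,\al}}(\Q_{\al,0}^2-\Q_{\al+1,0}\Q_{\al-1,0})$ kills the evaluation, on the dual side because $\pi(f^*){\mathcal D}_{\al,-1}\,1=0$ by Corollary \ref{corus}); and after the evaluations $ev_0$ and $ev_0^*$ the surviving truncations are literally the \emph{same} polynomial in $\{\Q_{\al,1}\}$ respectively $\{{\mathcal D}_{\al,1}\}$. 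This reduces everything to level-$1$ monomials, where Section \ref{leveoneproof} applies, and the intertwining $\Q_{\al,k}\circ\Psi(p)={\mathcal D}_{\al,k}\,\Psi(p)$ for every $k$ then drops out of the anti-homomorphism property $\pi((p\,\Q_{\al,k})^*)={\mathcal D}_{\al,k}\,\pi(p^*)$ — no recursion in $k$ is needed. Note, finally, that if you were to complete your base cases by this same mechanism (the only tool available here that handles arbitrary $p$), your level induction would become redundant, since the reduction already treats all levels simultaneously.
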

\begin{proof}
We use the anti-homomorphism $*$ that maps $\Q_{\al,k}\mapsto \Q_{\al,k}^*$ and reverses
the order of multiplication, while preserving $v$, and compose it with the representation $\pi$. 
To any polynomial $p$ of the $\{\Q_{\al,i}\}$ with coefficients in 
$\Z_v$ we associate the polynomial $p^*$ of the $\Q_{\al,i}^*$ by 
$p^*(\{\Q_{\al,i}^*\})= p(\{\Q_{\al,i}\})^*$, and finally 
$\pi(p^*)$ by the substitution $\Q_{\al,i}^*\to {\mathcal D}_{\al,i}$,
namely $\pi(p^*)=p^*(\{{\mathcal D}_{\al,i}\})$. 
We wish to prove that $\Psi(p)=\pi(p^*)\, 1$.
By Lemma \ref{uniqp} we may write:
$$ p=\sum_{A\cup B=[1,r];A\cap B=\emptyset; m_\al(A,B)\geq 0} \left(\prod_{\al\in A} \Q_{\al,-1}^{m_\al(A,B)}\right) \,
c^{A,B}_\bm (\{\Q_{\gamma,0}\})\, 
\left(\prod_{\beta\in B} \Q_{\beta,1}^{m_\beta(A,B)}\right)$$
where the coefficients $c^{A,B}_\bm$ are Laurent polynomials of the $\{\Q_{\gamma,0}\}$,
for any polynomial $p$ of the $\{\Q_{\al,i}\}$ obeying the quantum $Q$-system relations. 
As moreover $\Psi(\Q_{\al,-1} f)=0$ for any polynomial $f$, we see that
$$\Psi(p)= \Psi(\varphi(p)) $$
where
$$ \varphi(p)=\sum_{m_\al(\emptyset,[1,r])\geq 0} 
c^{\emptyset,[1,r]}_\bm (\{\Q_{\gamma,0}\})\, \prod_{\al=1}^r \Q_{\al,1}^{m_\al(\emptyset,[1,r])} $$
The map $\varphi$ is simply the truncation to the polynomial part of $p$ in the variables $\Q_{\al,1}$.
Let $\varphi^*$ the corresponding truncation of any Laurent polynomial of
$\{\Q_{\al,0}^*,\Q_{\al,1}^*\}$ to it polynomial part in $\{\Q_{\al,1}^*\}$.
We have
$$ \pi(p^*)\, 1= \varphi^*(\pi(p^*))\, 1$$
where we have used $\varphi(p)^*=\varphi^*(p^*)$, and $\pi(f^* ){\mathcal D}_{\al,-1}\, 1=0$ (by Corollary \ref{corus}) for all polynomials $f^*$ of the 
$\Q_{\al,i}^*$. By definition of $\Psi$ and $\phi$ and the evaluation $ev_0$, we may now 
evaluate $\varphi(p)$ at 
$\Q_{\al,0}=v^{-\sum_\beta \Lambda_{\al,\beta}}$ without altering $\Psi(p)= \Psi(ev_0(\varphi(p)))$. Note that
$ev_0(f)^*=ev_0^*(f^*)$ where $ev_0^*$ is the right evaluation at 
$\Q_{\al,0}^*=v^{-\sum_\beta \Lambda_{\al,\beta}}$ (after the dual normal ordering that puts all 
$\Q_{\al,0}^*$ to the right).
Finally, from Corollary \ref{corus} we have: 
$ \pi(p^*)\, 1=ev_0^*( \varphi(\pi(p^*)))\, 1$.
The two polynomials
$ev_0(\varphi(p))$ and $ev_0^*(\varphi(\pi(p^*)))$ are the {\it same} polynomial of respectively $\{Q_{\al,1}\}$
and $\{ {\mathcal D}_{\al,1}\}$ with coefficients in $\Z_v$.
Therefore, the statement $\Psi(p)=\pi(p^*)\, 1$
needs only be proved for a polynomial $p\in \Z_v[\{\Q_{\al,1},\al\in [1,r]\}]$, and in fact for any monomial of the form
$\prod_\al \Q_{\al,1}^{m_\al}$ with $m_\al\geq 0$.
This is exactly the level $1$ case of Theorem \ref{raisingthm}, which was proved in Sect. \ref{leveoneproof}
above. The Theorem follows
by using the anti-homomorphism property $\pi((p \Q_{\al,k})^*)={\mathcal D}_{\al,k} \pi(p^*)$.
\end{proof}

Finally, noting that $\Psi(1)=1$, and applying Theorem \ref{repsthm} iteratively, leads straightforwardly to 
Theorem \ref{raisingthm} for arbitrary level $k$.

\subsection{Level one case and degenerate Macdonald polynomials}

When restricted to level $1$, the formula of Corollary \ref{gracor} for graded characters reduces 
to the following, for $\bn=\{n^{(\al)}\}_{\al\in [1,r]}$:
\begin{equation}\label{gracorone}
\chi_\bn(q^{-1},\bz)=q^{-\frac{1}{2} \sum_{\al,\beta} n^{(\al)}{\rm Min}(\al,\beta)n^{(\beta)}+\frac{1}{2}\sum_{\al} \al n^{(\al)}} \,\prod_{\al=1}^r(M_{\al,1})^{n^{(\al)}}\, 1
\end{equation}
with $M_{\al,1}$ as in \eqref{otmacdo}.
We have the following:

\begin{thm}\label{eigenchar}
The level one ${\mathfrak sl}_{r+1}$ graded characters \eqref{gracorone} are eigenfunctions of the degenerate Macdonald difference operators $M_{\al,0}=M_\al$ of \eqref{degmac}, namely:
$$ M_{\al,0}\, \chi_\bn(q^{-1},\bz) = E_{\al,\bn}\, \chi_\bn(q^{-1},\bz), \quad 
E_{\al,\bn}=q^{\sum_{\beta} {\rm Min}(\al,\beta)n^{(\beta)}} $$
\end{thm}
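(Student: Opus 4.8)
The plan is to read off the eigenvalue directly from the commutation relations of the renormalized operators $M_{\al,n}$ together with the action of $M_{\al,0}$ on the constant function. By \eqref{gracorone}, the character $\chi_\bn(q^{-1},\bz)$ equals a central scalar (a power of $q$) times $\prod_{\beta=1}^r (M_{\beta,1})^{n^{(\beta)}}\, 1$. Since $M_{\al,0}$ is itself one of the operators in the dual $Q$-system family, I expect that applying it on the left and pushing it rightward through this product, using the $q$-commutation relations \eqref{commutdemnew}, will convert it into a scalar times the same product, the scalar being the sought eigenvalue.

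First I would establish that $M_{\al,0}$ fixes the constant function, i.e.\ $M_{\al,0}\,1 = 1$. This is immediate from \eqref{degmac}, since $M_{\al,0}=M_\al=\sum_{|I|=\al} a_I(\bz)\,\Gamma_I$ and each shift operator $\Gamma_I$ fixes constants, so $M_{\al,0}\,1 = \sum_{|I|=\al} a_I(\bz)$, which equals $1$ by Lemma \ref{lemvan} at $p=0$ (equivalently, via \eqref{otmacdo} and Corollary \ref{corus}, using that $\Delta$ also fixes constants).

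Next I would commute $M_{\al,0}$ through the product. Specializing \eqref{commutdemnew} to $n=0,\ p=1$ gives $M_{\al,0}\,M_{\beta,1} = q^{{\rm Min}(\al,\beta)}\,M_{\beta,1}\,M_{\al,0}$, and this holds for every pair $\al,\beta$, because the constraint $|p-n|\le |\beta-\al|+1$ reads $1\le |\beta-\al|+1$, always satisfied. Iterating, each passage of $M_{\al,0}$ past one factor $M_{\beta,1}$ contributes $q^{{\rm Min}(\al,\beta)}$, so moving it past all of $\prod_{\beta=1}^r (M_{\beta,1})^{n^{(\beta)}}$ accumulates the central scalar $q^{\sum_\beta {\rm Min}(\al,\beta)\,n^{(\beta)}}$. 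After this, $M_{\al,0}$ acts on $1$ and disappears by the previous paragraph, while the scalar prefactor of \eqref{gracorone}, being a power of the central element $q$, commutes freely. Collecting factors yields $M_{\al,0}\,\chi_\bn = q^{\sum_\beta {\rm Min}(\al,\beta)\,n^{(\beta)}}\,\chi_\bn = E_{\al,\bn}\,\chi_\bn$.

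I do not anticipate a genuine obstacle here; the argument is a single structured commutation. The only points requiring care are confirming that the commutation relation applies uniformly to all index pairs (checked above) and that the accumulated factor is independent of the chosen ordering of the product, which is automatic since each elementary transposition contributes the same symmetric weight $q^{{\rm Min}(\al,\beta)}$. The remaining subtlety is purely bookkeeping: ensuring the exponent $\sum_\beta {\rm Min}(\al,\beta)\,n^{(\beta)}$ is assembled correctly and that nothing beyond the scalar $1$ survives from $M_{\al,0}\,1$.
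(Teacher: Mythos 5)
Your proposal is correct and follows essentially the same route as the paper's proof: you commute $M_{\al,0}$ through $\prod_{\beta=1}^r (M_{\beta,1})^{n^{(\beta)}}$ using the relation \eqref{commutdemnew} (picking up $q^{{\rm Min}(\al,\beta)}$ per factor) and finish with $M_{\al,0}\,1=1$, which is exactly the paper's appeal to Lemma \ref{lemvan}. Your write-up merely spells out the details (the constraint $|p-n|\leq|\beta-\al|+1$ being automatic for $n=0$, $p=1$, and the centrality of the $q$-power prefactor) that the paper leaves implicit.
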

\begin{proof}
Starting from formula \eqref{gracorone}, we compute:
\begin{eqnarray*}
M_{\al,0}\, \chi_\bn(q^{-1},\bz) &=&q^{-\frac{1}{2} \sum_{\al,\beta} n^{(\al)}{\rm Min}(\al,\beta)n^{(\beta)}+\frac{1}{2}\sum_{\al} \al n^{(\al)}} M_{\al,0}\,\prod_{\beta=1}^r(M_{\beta,1})^{n^{(\beta)}}\, 1\\
&=& q^{\sum_{\beta} {\rm Min}(\al,\beta)n^{(\beta)}}\,
\chi_\bn(q^{-1},\bz)
\end{eqnarray*}
by use of the commutation relations \eqref{commutdemnew}, and the fact that $M_{\al,0}\, 1=1$ by Lemma
\ref{lemvan}.
\end{proof}

Recall that the symmetric $A_r$ $(q,t)$-Macdonald polynomials
$P_\lambda^{q,t}(\bz)$ of the variables $\bz=(z_1,...,z_{r+1})$,
indexed by partitions $\lambda=(\lambda_1\geq \lambda_2\geq \cdots \geq \lambda_{r+1}\geq 0)$,
are defined as the unique family of common eigenvectors to the difference operators $M_\al^{q,t}$, $\al=1,2,...,r$,
and whose leading
term is the symmetric monomial $m_\lambda=\prod_i z_i^{\lambda_i}+{\rm permutations}$. The Macdonald 
polynomials $P_\lambda^{q,t}(\bz)$ satisfy the following duality property \cite{macdo}:
\begin{equation}\label{duamac}
P_\lambda^{q,t}(\bz)=P_\lambda^{q^{-1},t^{-1}}(\bz)
\end{equation}

Comparing this with the result of Theorem \ref{eigenchar}, we conclude:

\begin{cor}
The level one $A_r$ graded characters $\chi_\bn(q^{-1},\bz)$ are the following degenerate limits of the Macdonald polynomials:
\begin{equation}\label{wittalimit}
\chi_\bn(q^{-1},\bz)=\lim_{t\to\infty} P_\lambda^{q,t}(\bz)=P_\lambda^{q^{-1},0}(\bz)
\end{equation}
where the correspondence between $\bn$ and $\lambda$ is via:
$$ \lambda_1=n^{(1)}+n^{(2)}+\cdots +n^{(r)},\quad \lambda_2=n^{(2)}+\cdots +n^{(r)},\quad \ldots\quad  \lambda_{r}=n^{(r)},
\quad \lambda_{r+1}=0$$
\end{cor}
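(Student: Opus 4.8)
The plan is to realize $\chi_\bn(q^{-1},\bz)$ as the unique monic common eigenfunction of the commuting family $\{M_{\al,0}=M_\al\}_{\al=1}^{r}$, and then to match it eigenvalue-by-eigenvalue with the $t\to\infty$ degeneration of the Macdonald polynomial $P_\lambda^{q,t}$. By definition $P_\lambda^{q,t}$ is the common eigenfunction of the operators $M_\al^{q,t}$ of \eqref{macdop}, normalized so that its leading monomial is $m_\lambda$; recalling that $M_\al^{q,t}$ differs from Macdonald's operator $D_{r+1}^\al$ only by the scalar $t^{\binom{\al}{2}}$, its eigenvalue on $P_\lambda^{q,t}$ is $t^{-\binom{\al}{2}}\,e_\al(q^{\lambda_1}t^{r},q^{\lambda_2}t^{r-1},\dots,q^{\lambda_{r+1}})$. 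First I would record that $\lim_{t\to\infty}P_\lambda^{q,t}$ exists: by the duality \eqref{duamac} it equals $\lim_{s\to 0}P_\lambda^{q^{-1},s}=P_\lambda^{q^{-1},0}$, the well-defined $q$-Whittaker polynomial, which already supplies the second equality of the corollary.

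Next I would compute the eigenvalue of the degenerate operator on this limit. Since $M_\al=\lim_{t\to\infty}t^{-\al(r+1-\al)}M_\al^{q,t}$ by \eqref{degmac}, the eigenvalue of $M_\al$ on $\lim_t P_\lambda^{q,t}$ is $\lim_{t\to\infty}t^{-\al(r+1-\al)-\binom{\al}{2}}\,e_\al(q^{\lambda_1}t^{r},\dots,q^{\lambda_{r+1}})$. The dominant contribution to $e_\al$ as $t\to\infty$ comes from the subset $\{1,2,\dots,\al\}$ carrying the largest $t$-exponents, namely $t^{\al r-\binom{\al}{2}}q^{\lambda_1+\cdots+\lambda_\al}$. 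The point I would check carefully is that the two powers of $t$ cancel exactly, $-\al(r+1-\al)-\binom{\al}{2}+(\al r-\binom{\al}{2})=0$, so all subdominant terms drop and the limit is the finite scalar $q^{\lambda_1+\cdots+\lambda_\al}$. Finally I would verify that this matches $E_{\al,\bn}$ of Theorem \ref{eigenchar}: a short Abel summation gives $\sum_{\beta}{\rm Min}(\al,\beta)n^{(\beta)}=\lambda_1+\cdots+\lambda_\al$ under the stated correspondence $n^{(\al)}=\lambda_\al-\lambda_{\al+1}$.

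With the eigenvalues identified, it remains to invoke uniqueness. Both $\lim_t P_\lambda^{q,t}$ and $\chi_\bn(q^{-1},\bz)$ are common eigenfunctions of $\{M_\al\}$ with the same eigenvalue tuple $(q^{\lambda_1+\cdots+\lambda_\al})_{\al=1}^{r}$; the former has leading monomial $m_\lambda$ by construction, and the latter does too, since its top Schur component is $s_\lambda=m_\lambda+\sum_{\mu<\lambda}(\cdots)m_\mu$ while all other components are indexed by $\mu<\lambda$. The operators $M_\al$ act upper-triangularly on $\{m_\mu\}$ with respect to dominance order, with diagonal entries $q^{\mu_1+\cdots+\mu_\al}$; since for generic $q$ these tuples separate partitions $\mu$, the monic symmetric polynomial with leading term $m_\lambda$ in the common eigenspace is unique, and the two functions coincide. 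The main obstacle is the bookkeeping in the eigenvalue degeneration—ensuring the $t$-powers cancel so the limit is finite and nonzero—after which the triangularity and uniqueness argument is standard; one should also confirm triangularity and eigenvalue separation for the $M_\al$, which follow from those of $M_\al^{q,t}$ by taking limits.
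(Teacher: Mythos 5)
Your proposal is correct and follows essentially the same route as the paper: the paper's (very terse) argument is exactly the comparison you carry out, namely that $\chi_\bn$ is a common eigenfunction of the $t\to\infty$ limits $M_\al$ of the operators $M_\al^{q,t}$ (Theorem \ref{eigenchar}), that Macdonald polynomials are characterized as the unique monic common eigenfunctions, and that duality \eqref{duamac} converts the limit $t\to\infty$ into $P_\lambda^{q^{-1},0}$. You merely make explicit the bookkeeping the paper leaves implicit — the cancellation of $t$-powers in the eigenvalue degeneration, the Abel-summation identity $\sum_\beta {\rm Min}(\al,\beta)n^{(\beta)}=\lambda_1+\cdots+\lambda_\al$, and the triangularity/eigenvalue-separation argument for uniqueness — all of which check out.
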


Note that we have picked $\lambda_{r+1}=0$, as the variables $\bz$ satisfy $z_1z_2\cdots z_{r+1}=1$, so that
$P_{\lambda_1,..,\lambda_r,\lambda_{r+1}}^{q,t}(\bz)
=(z_1z_2\cdots z_{r+1})^{\lambda_{r+1}}P_{\lambda_1,..,\lambda_r,0}^{q,t}(\bz)$ is independent of $\lambda_{r+1}$.

\begin{remark}
From eq.\eqref{wittalimit}, we may identify the graded level one character $\chi_\bn(q,\bz)$ with the Whittaker limit $t\to 0$
of the Macdonald polynomial $\lim_{t\to 0} P_\lambda^{q,t}(\bz)$. This shows in particular
that $\chi_\bn(q,\bz)$ is a polynomial of $q$.
\end{remark}

\begin{remark}
The raising operators $M_{\al,1}$ coincide with the raising operators $K_\al^+$ for Macdonald polynomials introduced by Kirillov and Noumi \cite{kinoum}, in the limit $t\to \infty$, as well as with the dual raising operators $K_\al^-$ in the Whittaker limit $t\to 0$.
\end{remark}

\section{Conclusion}\label{conclusion}

In this paper we have used the constant term identity \cite{qKR} for graded tensor product multiplicities involving solutions of the 
$A_r$ quantum $Q$-system to: (1) derive difference equations for the corresponding graded characters, and (2) write 
expressions for the graded characters in terms of generalized degenerate Macdonald $q$-difference operators,
which form a representation of the dual quantum $Q$-system.
This latter construction establishes in the case of $\sl_{r+1}$ an intriguing bridge between two standard mathematical 
theories: on one hand that of $A_r$ Macdonald operators and polynomials, and on the other hand that of quantum cluster 
algebras, specifically that of the $A_r$ quantum $Q$-system. 

Our $q$-difference operators coincide in the initial cluster $\{M_{\al,0},M_{\al,1}\}$ to respectively the Macdonald
operators and the Kirillov-Noumi raising operators for Macdonald polynomials \cite{kinoum}, both in the 
dual Whittaker limit $t\to \infty$. We may view the finite $t$ case as a deformation of our initial cluster.
The algebraic framework of Macdonald theory is the Double Affine Hecke Algebra (DAHA) \cite{Cheredbook}. 
In a forthcoming 
paper \cite{DFKdaha}
we define natural $t$-deformations of our $q$-difference operators in the context of DAHA, that reduce to
$M_{\al,n}$ \eqref{otmacdo} in the limit $t\to \infty$. The DAHA relations are in a sense the natural $t$-deformation of the quantum 
$Q$-system. This should extend to other types than $A_r$ as well, for which both quantum $Q$-systems and DAHA structures
are known.

Finally, the explicit representation of graded characters as iterated action of $q$-difference operators on the constant $1$
may be useful to explore the so-called conformal limit, in which some $n_i^{(\al)}$ are taken to infinity
(infinite tensor products, see e.g. \cite{FFinfinite} for the case of $\sl_2$).

\begin{appendix}
\section{Proof of Lemmas \ref{firstlemma} and \ref{secondlemma}}

The proof of Lemmas \ref{firstlemma} and \ref{secondlemma} goes as follows. 
First we rewrite the statement of the Lemmas as a vanishing condition for
the antisymmetrized version of some rational fraction of the $z$'s. Then we show that all residues at the poles
of this antisymmetrized expression vanish. Finally we conclude that the result is proportional to
the antisymmetrization of a polynomial of the $z$'s with a too small degree, which must therefore vanish.

\subsection{Antisymmetrization: general properties}\label{gendefsA}

For any function $f(\bz)$ of the variables $\bz=(z_1,...,z_N)$ we define the 
symmetrization ($S$) and antisymmetrization ($AS$) operators as:
\begin{eqnarray}
S(f)(\bz)&=&\frac{1}{N!} \sum_{\sigma \in S_N} f(z_{\sigma(1)},...,z_{\sigma(N)})\\
AS(f)(\bz)&=&\frac{1}{N!} \sum_{\sigma \in S_N} {\rm sgn}(\sigma)\, f(z_{\sigma(1)},...,z_{\sigma(N)})
\end{eqnarray}

We have the following immediate result:
\begin{lemma}\label{symlem}
For any function $f_{I,J}(\bz)$ of $\bz$ indexed by two subsets $I,J$ of $[1,N]$ we have:
$$\sum_{I,J\subset [1,N], I\cap J=\emptyset\atop
|I|=a,\ |J|=N-a} f_{I,J}(\bz) ={N\choose a}\, S\left( f_{I_0,J_0}(\bz) \right) $$
where $I_0=[1,a]$ and $J_0=[a+1,N]$.
\end{lemma}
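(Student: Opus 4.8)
The plan is to reduce the identity to a counting argument over the symmetric group $S_N$, the only real input being the covariance of the family $\{f_{I,J}\}$ under relabeling of the variables. Concretely, for the functions that actually occur in the applications (products $\prod_{i\in I,\,j\in J}g(z_i,z_j)$, such as $a_{I,J}$ and $b_{I,J}$), replacing each $z_k$ by $z_{\sigma(k)}$ is the same as relabeling the index sets by $\sigma$; that is,
\[
f_{I_0,J_0}(z_{\sigma(1)},\ldots,z_{\sigma(N)})=f_{\sigma(I_0),\sigma(J_0)}(\bz),\qquad \sigma\in S_N.
\]
I would record this covariance first, since it is exactly what makes the right-hand side $S(f_{I_0,J_0})(\bz)$ — which a priori only involves the single base function $f_{I_0,J_0}$ — encode the whole family $\{f_{I,J}\}$ appearing on the left.

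Substituting the covariance relation into the definition of the symmetrizer gives
\[
S(f_{I_0,J_0})(\bz)=\frac{1}{N!}\sum_{\sigma\in S_N}f_{\sigma(I_0),\sigma(J_0)}(\bz).
\]
The key step is then to reorganize this sum according to the value of the ordered pair $(I,J)=(\sigma(I_0),\sigma(J_0))$. As $\sigma$ runs over $S_N$ the pair $(\sigma(I_0),\sigma(J_0))$ runs over precisely the indexing set of the left-hand sum, namely all ordered pairs of disjoint subsets with $|I|=a$ and $|J|=N-a$ (so that $J=\bar I$). Moreover, each such pair is produced by exactly $a!\,(N-a)!$ permutations: those sending $I_0=[1,a]$ bijectively onto $I$ and $J_0=[a+1,N]$ bijectively onto $J$, with the two bijections chosen independently, which determine $\sigma$ completely since $I_0\cup J_0=[1,N]$.

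Collecting the equal contributions yields
\[
S(f_{I_0,J_0})(\bz)=\frac{a!\,(N-a)!}{N!}\sum_{I\cap J=\emptyset,\ |I|=a,\ |J|=N-a}f_{I,J}(\bz)={N\choose a}^{-1}\sum_{I,J}f_{I,J}(\bz),
\]
which rearranges to the stated formula. I expect no genuine obstacle here: the single point requiring care is to make the covariance convention explicit, since literally \emph{any} family $f_{I,J}$ would not satisfy the identity (the right-hand side would then see only $f_{I_0,J_0}$). Once the family is understood to transform as above, the argument is purely the combinatorial counting of the fibers of the map $\sigma\mapsto(\sigma(I_0),\sigma(J_0))$, with no analytic content — consistent with the claim that the result is ``immediate.''
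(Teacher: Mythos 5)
Your proof is correct and is precisely the argument the paper leaves implicit: the lemma is stated there as an ``immediate result'' with no written proof, and the fiber-counting of the map $\sigma\mapsto(\sigma(I_0),\sigma(J_0))$ over $S_N$ that you give is the standard justification. Your remark about the covariance convention is also well taken — the statement's ``any function $f_{I,J}$'' tacitly assumes the family transforms as $f_{I_0,J_0}(z_{\sigma(1)},\ldots,z_{\sigma(N)})=f_{\sigma(I_0),\sigma(J_0)}(\bz)$, which indeed holds for every family ($a_{I,J}$, $b_{I,J}$, $c_{I,J}$ and their products) to which the lemma is applied in Appendices A and B.
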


Lemma \ref{symlem} allows to rephrase the statements of Lemmas \ref{firstlemma} and \ref{secondlemma}
as identities on symmetrized expressions.

For $\bz=(z_1,...,z_N)$, we define the Vandermonde determinant $\Delta(\bz)=\prod_{1\leq i<j\leq N}(z_i-z_j)$
It is anti-symmetric, hence $AS(\Delta(\bz))=\Delta(\bz)$, and moreover for any function $f(\bz)$ we have
$AS(\Delta(\bz)f(\bz))=\Delta(\bz)S(f(\bz))$. 

We have the following standard fact about anti-symmetric polynomials.
\begin{lemma}\label{aslem}
The non-zero anti-symmetric polynomial $P$ of $\bz$ of smallest total degree, namely such that 
$AS(P)=P$, is proportional to the Vandermonde determinant of the $z$'s, up to a 
constant independent of the $z$'s. 
\end{lemma}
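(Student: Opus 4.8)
The plan is to prove the stronger structural fact that \emph{every} anti-symmetric polynomial is divisible by the Vandermonde determinant $\Delta(\bz)$, and then to read off the minimal-degree case from the degree count $\deg\Delta=\binom{N}{2}$.

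First I would use anti-symmetry to locate the zeros of $P$. For any pair $i<j$, the transposition exchanging $z_i$ and $z_j$ sends $P$ to $-P$ by the defining relation $AS(P)=P$; restricting to the hyperplane $z_i=z_j$, on which this transposition acts as the identity, forces $P\big|_{z_i=z_j}=0$. Since $P$ is a polynomial and the coefficient ring has no $2$-torsion, the factor theorem applied in the variable $z_i$ (with the remaining variables treated as parameters) shows that $z_i-z_j$ divides $P$ in the polynomial ring.

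Next I would assemble these divisibilities into a single divisibility by $\Delta$. The polynomial ring in $\bz$ is a unique factorization domain, and the linear forms $z_i-z_j$, $1\le i<j\le N$, are pairwise non-associate irreducibles; hence their product $\Delta(\bz)=\prod_{i<j}(z_i-z_j)$ divides $P$, and we may write $P=\Delta(\bz)\,Q(\bz)$ for some polynomial $Q$. Since $\Delta$ is itself anti-symmetric, the quotient $Q=P/\Delta$ is necessarily symmetric, which records the general form of an anti-symmetric polynomial, even though only the divisibility is needed below.

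Finally I would count degrees: $\deg P=\binom{N}{2}+\deg Q\ge\binom{N}{2}$, with equality precisely when $\deg Q=0$. Thus a non-zero anti-symmetric $P$ of smallest total degree must have $Q$ equal to a constant, giving $P=c\,\Delta(\bz)$ as claimed. I expect the only delicate point to be the passage from ``$z_i-z_j$ divides $P$ for each pair'' to ``$\Delta$ divides $P$'': this is precisely where coprimality of the distinct linear factors together with unique factorization must be invoked, so as to guarantee that the quotient $Q$ is a genuine polynomial rather than merely a rational function.
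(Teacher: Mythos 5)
Your proof is correct. The paper itself offers no proof of this lemma---it is cited as a ``standard fact''---and your argument (anti-symmetry forces vanishing on each hyperplane $z_i=z_j$, the factor theorem and unique factorization give divisibility by $\Delta(\bz)$, and the degree count $\deg\Delta=\binom{N}{2}$ settles the minimal-degree case) is exactly the standard argument that underlies it, including the correct identification of the coprimality/UFD step as the point needing care.
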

This implies the following:
\begin{cor}\label{ascor}
For any polynomial $P(\bz)$ of total degree strictly less than $N(N-1)/2$,
we have $AS(P)=0$.
\end{cor}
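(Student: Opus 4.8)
The plan is to apply Lemma \ref{aslem} together with the elementary observation that antisymmetrization cannot raise the total degree of a polynomial. First I would note that $AS(P)$ is anti-symmetric by construction: it is a signed average of $P$ over $S_N$, so applying any transposition of the $z$'s reverses its sign. Since each summand $P(z_{\sigma(1)},\dots,z_{\sigma(N)})$ has the same total degree as $P$, the degree of the linear combination satisfies $\deg AS(P)\leq \deg P < N(N-1)/2$.

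Next I would invoke Lemma \ref{aslem}, which identifies the nonzero anti-symmetric polynomial of smallest total degree as a scalar multiple of the Vandermonde determinant $\Delta(\bz)=\prod_{i<j}(z_i-z_j)$, whose total degree is exactly $N(N-1)/2$. Consequently every nonzero anti-symmetric polynomial has total degree at least $N(N-1)/2$. Combining the two facts, $AS(P)$ is an anti-symmetric polynomial of total degree strictly less than $N(N-1)/2$; by the lower bound just stated it cannot be nonzero, and therefore $AS(P)=0$, as claimed.

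There is essentially no obstacle here, as the corollary is an immediate consequence of the lemma. The only point requiring (minimal) care is the degree bookkeeping: one must verify that $\deg AS(P)\le \deg P$, which is clear from the definition of $AS$ as a sum of degree-preserving coordinate permutations, and that Lemma \ref{aslem} indeed furnishes the sharp lower bound $N(N-1)/2$ on the degree of any nonzero anti-symmetric polynomial. These two inequalities, taken together, leave no room for a nonzero value.
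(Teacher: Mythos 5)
Your proof is correct and follows exactly the route the paper intends: the paper states the corollary as an immediate consequence of Lemma \ref{aslem}, and your argument — that $AS(P)$ is anti-symmetric of total degree at most $\deg P < N(N-1)/2$, while the lemma forces any nonzero anti-symmetric polynomial to have degree at least $N(N-1)/2$ — is precisely that deduction, with the degree bookkeeping made explicit.
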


\subsection{Proof of Lemma \ref{firstlemma}}

For any integers $b\geq a\geq 0$, and $p\geq m\geq 0$, $I_0=[1,a]$, $J_0=[a+1,a+b]$, let us define
$$\varphi_{a,b}^{m,p}(\bz)=S\left( z_{I_0}^m z_{J_0}^p \prod_{i\in I_0\atop j\in J_0} 
\frac{z_i}{z_i-z_j}\frac{z_j}{z_j-qz_i}\right)$$
We note that as $m\leq p$, then:
$$\varphi_{a,b}^{m,p}(\bz)=(z_1\cdots z_{a+b})^m \, \varphi_{a,b}^{0,p-m}(\bz) $$
We also define:
\begin{equation}\label{psidefi}
\psi_{a,b}^{m,p}(\bz):=\varphi_{a,b}^{m,p}(\bz) -q^{a (p-m)} \varphi_{b,a}^{p,m}(\bz)
\end{equation}
Using Lemma \ref{symlem}, it is straightforward to show that the statement of Lemma \ref{firstlemma} is equivalent to:
\begin{equation}\label{refirst}
\psi_{a,b}^{m,p}(\bz)=0
\end{equation}

In the following, we use the notation $\Delta_I=\prod_{1\leq k\leq \ell \leq n} (z_{i_k}-z_{i_\ell})$
for any {\it ordered} set $I=\{i_1,i_2,...,i_n\}$.
With $I_0,J_0$ as above, we now express:
\begin{eqnarray*}
\Delta(\bz) \varphi_{a,b}^{m,p}(\bz)  &=& AS\left(\Delta(\bz) z_{I_0}^m z_{J_0}^p \prod_{i\in I_0\atop j\in J_0} 
\frac{z_i}{z_i-z_j}\frac{z_j}{z_j-qz_i}\right) \\
&=& AS\left(\Delta_{I_0} \Delta_{J_0} z_{I_0}^{m+b} z_{J_0}^p \prod_{i\in I_0\atop j\in J_0} 
\frac{z_j}{z_j-qz_i}\right) 
\end{eqnarray*}

The only possible poles of $\Delta(\bz) \varphi_{a,b}^{p,q}(\bz)$ are for 
$z_i\to q z_j$ for $i\neq j$. Let us compute the residue
at the pole $z_2\to q z_1$ in $z_2$. Pick two ordered sets $I_0',J_0'$ with $I_0'\cap J_0'=\emptyset$, 
$I_0'\cup J_0'=[1,a+b]$, $|I_0'|=a$, $|J_0'|=b$, and such that
$1$ is the first element of $I'_0=\{1\}\cup I_1$ and $2$ the last element of $J_0'=J_1\cup \{2\}$. 
For any subset $L\subset [1,N]$, we denote by $AS_L$ the antisymmetrization over the set $\{z_i\}_{i\in L}$.
We compute
\begin{eqnarray*}
{\rm Res}_{z_2\to qz_1} \Delta(\bz) \varphi_{a,b}^{m,p}(\bz)  &=& z_1^{m+b} (q z_1)^{p+1} AS_{[3,a+b]}\left( 
\Delta_{I_1} \prod_{i\in I_1}(z_1-z_i) \Delta_{J_1} \prod_{j\in J_1}(z_j-q z_1) \right. \\
&&\qquad \left. 
z_{I_1}^{m+b} z_{J_1}^p \prod_{i\in I_1\atop j\in J_1} 
\frac{z_j}{z_j-qz_i}  \prod_{i\in I_1}\frac{q z_1}{q z_1-qz_i} \prod_{ j\in J_1} \frac{z_j}{z_j-qz_1} \right)\\
&=& q^{p+1}z_1^{m+p+b+a}  \Delta(\bz') \varphi_{a-1,b-1}^{m+1,p+1}(\bz')
\end{eqnarray*}
where $\bz'=(z_3,...,z_{a+b})$. Using \eqref{psidefi}, we deduce that:
\begin{eqnarray*}
{\rm Res}_{z_2\to qz_1} \Delta(\bz) \psi_{a,b}^{m,p}(\bz)&=&
\Delta(\bz') \left\{q^{p+1}z_1^{m+p+b+a}  \varphi_{a-1,b-1}^{m+1,p+1}(\bz')
-q^{a(p-m)} q^{m+1} z_1^{m+p+b+a}\varphi_{b-1,a-1}^{p+1,m+1}(\bz')\right\} \\
&=& q^{p+1}z_1^{m+p+b+a} \Delta(\bz')\psi_{a-1,b-1}^{m+1,p+1}(\bz') 
\end{eqnarray*}

We now proceed by induction on $a$. For $a=0$, we have:
$$ \varphi_{0,b}^{m,p}(\bz)=S\left((z_1\cdots z_b)^p\right)=(z_1\cdots z_b)^p=\varphi_{b,0}^{p,m}(\bz) $$
hence $\psi_{0,b}^{m,p}(\bz)=0$.
Assuming that $\psi_{a-1,b-1}^{m+1,p+1}(\bz') =0$, we see that the residue at $z_2\to q z_1$ of
$\psi_{a,b}^{m,p}(\bz)$ vanishes, hence the is no pole of the form $1/(z_2-qz_1)$ in the antisymmetrized expression.
By symmetry, this holds for any pole $z_i\to q z_j$. We conclude that $\psi_{a,b}^{m,p}(\bz)$ is a polynomial.
Using the antisymmetrization formula, we easily get:
\begin{eqnarray*}\Delta(\bz) \varphi_{a,b}^{m,p}(\bz) &=&
AS\left(\Delta_{I_0} \Delta_{J_0} z_{I_0}^{m+b} z_{J_0}^{p+a} \prod_{i\in I_0\atop j\in J_0} 
\frac{1}{z_j-qz_i}\right) \\
&=& (z_1\cdots z_{a+b})^{p+a} 
AS\left(\Delta_{I_0} \Delta_{J_0} z_{I_0}^{b-a-(p-m)} \prod_{i\in I_0\atop j\in J_0} 
\frac{1}{z_j-qz_i}\right) 
\end{eqnarray*}
Similarly:
$$\Delta(\bz) \varphi_{b,a}^{p,m}(\bz)=
 (z_1\cdots z_{a+b})^{p+a} 
AS\left(\Delta_{I_0} \Delta_{J_0} z_{I_0}^{b-a-(p-m)} \prod_{i\in J_0\atop j\in I_0} 
\frac{1}{z_j-qz_i}\right) $$
Finally, we have:
$$\frac{\Delta(\bz)\psi_{a,b}^{m,p}(\bz)}{ (z_1\cdots z_{a+b})^{p+a-1}}=
AS\left(\Delta_{I_0} \Delta_{J_0} z_{I_0}^{b-a+1-(p-m)}z_{J_0}\left\{ \prod_{i\in I_0\atop j\in J_0} 
\frac{1}{z_j-qz_i}-q^{a(p-m)} \prod_{i\in J_0\atop j\in I_0} 
\frac{1}{z_j-qz_i}\right\} \right)$$
where the r.h.s. is a polynomial, as $b-a+1-(p-m)\geq 0$ and it has no poles at $z_i=qz_j$. Writing $N=a+b$,
its total degree is:
$$\frac{N(N-1)}{2}+ma+pb-N(p+a-1)=\frac{N(N-1)}{2}-(p-m)a -N(a-1)<\frac{N(N-1)}{2}$$
for $a\geq 1$.
The degree of the polynomial is therefore too small, 
and it must vanish by Corollary \ref{ascor}. The Lemma \ref{firstlemma} follows.

\subsection{Proof of Lemma \ref{secondlemma}}

We proceed analogously. 
For $I_0=[1,a]$ and $J_0=[a+1,2a]$, we define
$$\theta_a(\bz)=S\left( \prod_{i\in I_0\atop j\in J_0} \frac{z_i}{z_i-z_j}\frac{z_j}{z_j-qz_i}
 \left( 1-q^a \frac{z_{I_0}}{z_{J_0}} \right) \right)$$
We also have:
$$\Delta(\bz)\theta_a(\bz)=AS\left(\Delta_{I_0} \Delta_{J_0} z_{I_0}^a  
\prod_{i\in I_0\atop j\in J_0}\frac{z_j}{z_j-qz_i}
 \left( 1-q^a \frac{z_{I_0}}{z_{J_0}} \right) \right)$$
 Let us compute the residue of the pole of this expression at $z_2\to q z_1$.
 As before, we pick two ordered sets $I_0'$ and $J_0'$ of cardinality $a$ such that $I_0'\cap J_0'=\emptyset$,
 $I_0'\cup J_0'=[1,2a]$ and $1$ is the first element of $I_0'=\{1\}\cup I_1$ and $2$ the last element of
 $J_0'=J_1\cup \{2\}$. We compute:
 \begin{eqnarray*}
 {\rm Res}_{z_2\to qz_1} \Delta(\bz) \theta_{a}(\bz)&=& q z_1^{a+1}
 AS\left(\Delta_{I_1} \prod_{i\in I_1}(z_1-z_i)  \Delta_{J_1}\prod_{j\in J_1}(z_j-q z_1) z_{I_1}^a  \right. \\
&&\quad \left. \prod_{i\in I_1\atop j\in J_1}\frac{z_j}{z_j-qz_i} \prod_{i\in I_1}\frac{q z_1}{q z_1-q z_i} 
\prod_{j\in J_1}\frac{z_j}{z_j-q z_1}
 \left( 1-q^{a-1} \frac{z_{I_1}}{z_{J_1}} \right) \right)\\
 &=&  q z_1^{2a} (z_3\cdots z_{2a})
 AS\left(\Delta_{I_1}  \Delta_{J_1} z_{I_1}^{a-1}\prod_{i\in I_1\atop j\in J_1}\frac{z_j}{z_j-qz_i} 
 \left( 1-q^{a-1} \frac{z_{I_1}}{z_{J_1}} \right) \right)\\
 &=&  q z_1^{2a} (z_3\cdots z_{2a}) \Delta(\bz') \theta_{a-1}(\bz')
 \end{eqnarray*}
where we denote by $\bz'=(z_3,z_4,...,z_{2a})$. 

Likewise, we define for $I_2=[1,a+1]$ and $J_2=[a+2,2a]$:
$$\varphi_a(\bz)= S\left( \prod_{i\in I_2\atop j\in J_2} \frac{z_i}{z_i-z_j}\frac{z_j}{z_j-q z_i}  \right) $$
We also have:
$$\Delta(\bz)\varphi_a(\bz)= AS\left(\Delta_{I_2}\Delta_{J_2} z_{I_2}^{a-1} 
\prod_{i\in I_2\atop j\in J_2} \frac{z_j}{z_j-q z_i}  \right)$$
Let us compute the residue of the pole of this expressions at $z_2\to q z_1$.
We pick two ordered sets $I_2'$ and $J_2'$ such that $I_2'\cap J_2'=\emptyset$,
 $I_2'\cup J_2'=[1,2a]$, $|I_2'|=a+1$, $|J_2'|=a-1$, 
 and $1$ is the first element of $I_2'=\{1\}\cup I_3$ and $2$ the last element of
 $J_2'=J_3\cup \{2\}$. We compute:
 \begin{eqnarray*}
 {\rm Res}_{z_2\to qz_1} \Delta(\bz) \varphi_{a}(\bz)&=&q z_1^{a}
 AS\left(\Delta_{I_3} \prod_{i\in I_3}(z_1-z_i)  \Delta_{J_3}\prod_{j\in J_3}(z_j-q z_1) z_{I_3}^{a-1}   \right. \\
&&\left. \prod_{i\in I_3\atop j\in J_3} \frac{z_j}{z_j-q z_i}  \prod_{i\in I_3} \frac{q z_1}{q z_1-q z_i} 
 \prod_{j\in J_3}\frac{z_j}{z_j-q z_1}\right)\\
 &=& q z_1^{2a} AS\left(\Delta_{I_3}\Delta_{J_3} z_{I_3}^{a-1} z_{J_3} \prod_{i\in I_3\atop j\in J_3} \frac{z_j}{z_j-q z_i}
 \right)\\
 &=& q z_1^{2a} (z_3z_4\cdots z_{2a})\Delta(\bz')\varphi_{a-1}(\bz')
 \end{eqnarray*}
 We conclude that
$$ {\rm Res}_{z_2\to qz_1} \Delta(\bz) \left\{ \theta_{a}(\bz)-\varphi_{a}(\bz)\right\}=q z_1^{2a} (z_3z_4\cdots z_{2a})
\Delta(\bz')\left\{ \theta_{a-1}(\bz')-\varphi_{a-1}(\bz')\right\}$$

We proceed by induction on $a$. For $a=1$ we have 
$$(z_1-z_2)\theta_1(z_1,z_2)=AS\left( \frac{z_1z_2}{z_2-qz_1} (1-q \frac{z_1}{z_2})\right)=z_1-z_2 $$
Analogously, we find
$$\varphi_1(z_1,z_2)=S(1)=1$$
hence $\theta_1(z_1,z_2)-\varphi_1(z_1,z_2)=0$. Assuming that $\theta_{a-1}(\bz')-\varphi_{a-1}(\bz')=0$,
we deduce that $\Delta(\bz)(\theta_{a}(\bz)-\varphi_{a}(\bz))$ has no pole at $z_2=qz_1$. By symmetry, it has no pole
at any $z_i=qz_j$, hence it is a polynomial. Finally we write:
\begin{eqnarray*}\frac{\Delta(\bz)(\theta_{a}(\bz)-\varphi_{a}(\bz))}{(z_1z_2\cdots z_{2a})^{a-1}}&=&AS\left(\Delta_{I_0} \Delta_{J_0} z_{I_0}\prod_{i\in I_0\atop j\in J_0}\frac{1}{z_j-qz_i}
 \left( z_{J_0}-q^a z_{I_0} \right) \right)\\
 &&-AS\left(\Delta_{I_2} \Delta_{J_2} z_{J_2}^2\prod_{i\in I_2\atop j\in J_2}\frac{1}{z_j-qz_i}\right) 
 \end{eqnarray*}
 where the r.h.s. is a polynomial of total degree $N(N-1)/2-2a(a-1)<N(N-1)/2$ for $a\geq 2$ and $N=2a$. 
 By Corollary \ref{ascor},
 the result must vanish, and the Lemma \ref{secondlemma} follows.

\section{Proof of Lemma \ref{lemvan}}

Notations are as in Sect. \ref{gendefsA}.
Let us consider for $\al\in [1,N]$ and $p\in \Z$ the quantity
$$ A_{\al,p}(\bz)=\sum_{I\subset [1,N]\atop |I|=\al} (z_I)^p \, a_I(\bz) $$
Picking the particular subset $I_\al=\{1,2,...,\al\}$, we may also write
$$ A_{\al,p}(\bz)={N\choose \al} \, S\left( (z_{I_\al})^p \, a_{I_\al}(\bz)\right) $$
We now wish to eliminate the denominators in this (symmetric) expression. We 
use that $AS(\Delta(\bz)f(\bz))=\Delta(\bz) \, S\left(f(\bz)\right)$ for any $f$ to rewrite:
$$ \Delta(\bz)\, A_{\al,p}(\bz)={N\choose \al} AS\left((z_{I_\al})^{p+N-\al} \Delta(z_1,...,z_\al)
\Delta(z_{\al+1},...,z_N)  \right) $$
The function to be antisymmetrized is a polynomial if $p\geq \al-N$, and then it has total degree
$$ \al(p+N-\al)+\frac{\al(\al-1)}{2}+\frac{(N-\al)(N-\al-1)}{2}= \al\, p+\frac{N(N-1)}{2}$$
By Corollary \ref{ascor}, we deduce that for $p=-1,-2,...,\al-N$ the antisymmetrized 
expression must vanish.

When $p=0$, the degree is exactly $N(N-1)/2$ and therefore $\Delta(\bz)\, A_{\al,p}(\bz)$
is proportional to $\Delta(\bz)$. The proportionality constant is fixed by evaluating $A_{\al,0}$
in the successive limits  $z_1\to\infty,z_2\to\infty,...,z_\al\to\infty$, and we finally get $A_{\al,0}=1$.

This completes the proof of Lemma \ref{lemvan}.

\end{appendix}
\bibliographystyle{alpha}

\bibliography{refs}

\end{document}